\definecolor{darkblue}{rgb}{0.0, 0.0, 0.45}
\def\@settitle{\begin{center}%
		\baselineskip14\p@\relax
		\normalfont\LARGE\scshape\bfseries
		\@title
	\end{center}%
}
\def\subsection{\@startsection{subsection}{2}%
	\z@{.5\linespacing\@plus.7\linespacing}{.5\linespacing}%
	{\normalfont\large\bfseries}}
\def\subsubsection{\@startsection{subsubsection}{3}%
	\z@{.5\linespacing\@plus.7\linespacing}{.5\linespacing}%
	{\normalfont\itshape}}
\def\munderbar#1{\underline{\sbox\tw@{$#1$}\dp\tw@\z@\box\tw@}}
\newtheorem{theorem}{Theorem}[section]
\newtheorem{definition}[theorem]{Definition}
\newtheorem{lemma}[theorem]{Lemma}
\newtheorem{remark}[theorem]{Remark}
\newtheorem{corollary}[theorem]{Corollary}
\newtheorem{proposition}[theorem]{Proposition}
\newcommand{\be}{\begin{equation}}
\newcommand{\ee}{\end{equation}}
\newcommand{\bea}{\begin{equation*}\begin{aligned}}
\newcommand{\eea}{\end{aligned}\end{equation*}}
\newcommand{\ds}{\displaystyle}
\newcommand{\R}{\mathbb{R}}
\newcommand{\Min}{\min\limits_}
\newcommand{\Sup}{\sup\limits_}
\newcommand{\Inf}{\inf\limits_}
\newcommand{\Tr}[1]{\Trace \left[ #1 \right]}
\newcommand{\wh}{\widehat}
\newcommand{\mc}{\mathcal}
\newcommand{\mbb}{\mathbb}
\newcommand{\inner}[2]{\big \langle #1, #2 \big \rangle }
\newcommand{\norm}[1]{\big\| #1\big\| }
\newcommand{\cov}{\Sigma} 
\newcommand{\covsa}{\wh{\Sigma}} 
\newcommand{\trueP}{\mbb P}
\newcommand{\empiP}{\wh{\mbb P}_n}
\newcommand{\Ambi}{\mc P_\rho} 
\newcommand{\est}{X\opt} 
\newcommand{\estdual}{\dualvar\opt} 
\newcommand{\estx}{x\opt}
\DeclareMathOperator{\Trace}{Tr}
\DeclareMathOperator{\diag}{diag}
\DeclareMathOperator{\dd}{d}
\DeclareMathOperator{\st}{s.t.}
\DeclareMathOperator{\vect}{vec}
\newcommand{\PSD}{\mathbb{S}_{+}} 
\newcommand{\PD}{\mathbb{S}_{++}} 
\newcommand{\Let}{\coloneqq}
\newcommand{\opt}{^\star}
\newcommand{\eps}{\varepsilon}
\newcommand{\ra}{\rightarrow}
\newcommand{\X}{\mathcal{X}}
\newcommand{\Wass}{\mathds{W}}
\newcommand{\V}{\mathds{W}_S}
\newcommand{\Q}{\mbb{Q}}
\newcommand{\EE}{\mathds{E}}
\newcommand{\Dg}{\Delta_\gamma}
\newcommand{\DX}{\Delta_X}
\newcommand{\half}{\frac{1}{2}}
\newcommand{\eigval}{\lambda} 
\newcommand{\dualvar}{\gamma}
\newcommand{\covsaeig}{\lambda}
\newcommand{\covsaeigvect}{v}
\newcommand{\g}{g}
\newcommand{\J}{\mc J}
\title[Distributionally Robust Inverse Covariance Estimation]{Distributionally Robust Inverse Covariance Estimation:\\ The Wasserstein Shrinkage Estimator}
\author{Viet Anh Nguyen, Daniel Kuhn, Peyman Mohajerin Esfahani}
\thanks{The authors are with the Risk Analytics and Optimization Chair, EPFL, Switzerland ({\tt viet-anh.nguyen@epfl.ch}, {\tt daniel.kuhn@epfl.ch}) and the Delft Center for Systems and Control, Delft University of Technology, The Netherlands ({\tt P.MohajerinEsfahani@tudelft.nl}).}
\date{\today}
\begin{document}

\begin{abstract}
We introduce a distributionally robust maximum likelihood estimation model with a Wasserstein ambiguity set to infer the inverse covariance matrix of a $p$-dimensional Gaussian random vector from $n$ independent samples. The proposed model minimizes the worst case (maximum) of Stein's loss across all normal reference distributions within a prescribed Wasserstein distance from the normal distribution characterized by the sample mean and the sample covariance matrix. We prove that this estimation problem is equivalent to a semidefinite program that is tractable in theory but beyond the reach of general purpose solvers for practically relevant problem dimensions~$p$. In the absence of any prior structural information, the estimation problem has an analytical solution that is naturally interpreted as a nonlinear shrinkage estimator. Besides being invertible and well-conditioned even for $p>n$, the new shrinkage estimator is rotation-equivariant and preserves the order of the eigenvalues of the sample covariance matrix. These desirable properties are not imposed {\em ad hoc} but emerge naturally from the underlying distributionally robust optimization model. Finally, we develop a sequential quadratic approximation algorithm for efficiently solving the general estimation problem subject to conditional independence constraints typically encountered in Gaussian graphical models.
\end{abstract}
	
\maketitle

\section{Introduction}
The covariance matrix $\cov \Let \EE^{\trueP}[(\xi-\EE^{\trueP}[\xi])(\xi-\EE^{\trueP}[\xi])^\top]$ of a random vector $\xi \in \R^p$ governed by a distribution~$\trueP$ collects basic information about the spreads of all individual components and the linear dependencies among all pairs of components of $\xi$. The inverse $\cov^{-1}$ of the covariance matrix is called the {\em precision matrix}. This terminology captures the intuition that a large spread reflects a low precision and vice versa. While the covariance matrix appears in the {\em formulations} of many problems in engineering, science and economics, it is often the precision matrix that emerges in their {\em solutions}. For example, the optimal classification rule in linear discriminant analysis~\cite{ref:Fisher-1936}, the optimal investment portfolio in Markowitz' celebrated mean-variance model~\cite{ref:Markowitz-1952} or the optimal array vector of the beamforming problem in signal processing~\cite{ref:Du-2010} all depend on the precision matrix. Moreover, the optimal fingerprint method used to detect a multivariate climate change signal blurred by weather noise requires knowledge of the climate vector's precision matrix~\cite{ref:Ribes-2009}. 

If the distribution $\trueP$ of $\xi$ is known, then the covariance matrix~$\cov$ and the precision matrix~$\cov^{-1}$ can at least principally be calculated in closed form. In practice, however, $\trueP$ is never known and only indirectly observable through $n$ independent training samples $\wh{\xi}_1, \ldots, \wh{\xi}_n$ from $\trueP$. In this setting, $\cov$ and $\cov^{-1}$ need to be estimated from the training data. Arguably the simplest estimator for $\cov$ is the sample covariance matrix $\covsa \Let \frac{1}{n} \sum_{i=1}^n (\wh{\xi}_i-\wh \mu) (\wh{\xi}_i-\wh \mu)^{\top}$, where $\wh \mu \Let \frac{1}{n} \sum_{i=1}^n \wh{\xi}_i$ stands for the sample mean. Note that $\wh \mu$ and $\covsa$ simply represent the actual mean and covariance matrix of the uniform distribution on the training samples. For later convenience, $\covsa$ is defined here without Bessel's correction and thus constitutes a biased estimator.\footnote{An elementary calculation shows that $\EE^{\trueP^n}[\covsa] =\frac{n-1}{n}\cov$.} Moreover, as a sum of $n$ rank-$1$ matrices, $\covsa$ is rank deficient in the big data regime ($p> n$). In this case, $\covsa$ cannot be inverted to obtain a precision matrix estimator, which is often the actual quantity of interest. 

If $\xi$ follows a normal distribution with unknown mean $\mu$ and precision matrix $X\succ 0$, which we will assume throughout the rest of the paper, then the log-likelihood function of the training data can be expressed as
\begin{align}
	\wh{\mc L}(\mu,X) &\Let -\frac{np}{2} \log(2\pi) + \frac{n}{2} \log \det X - \frac{1}{2} \sum_{i=1}^n (\wh\xi_i-\mu)^\top X(\wh\xi_i-\mu) \notag \\
	&= -\frac{np}{2} \log(2\pi) + \frac{n}{2} \log \det X - \frac{n}{2} \Tr{\covsa X}- \frac{n}{2} (\wh{\mu}- \mu)^{\top}X (\wh{\mu}- \mu). \label{eq:log-likelihood}
\end{align}
Note that $\wh{\mc L}(\mu,X)$ is strictly concave in $\mu$ and $X$ \cite[Chapter~7]{ref:Boyd-04} and depends on the training samples only through the sample mean and the sample covariance matrix. It is clear from the last expression that $\wh{\mc L}(\mu,X)$ is maximized by $\mu\opt=\wh\mu$ for any fixed $X$. The maximum likelihood estimator $\est$ for the precision matrix is thus obtained by maximizing $\wh{\mc L}(\wh \mu,X)$ over all $X\succ 0$, which is tantamount to solving the convex program
\begin{equation}
	\label{eq:nominal-problem}
	\inf_{X\succ 0} - \log \det X + \Tr{\covsa X}.
\end{equation}
If $\covsa$ is rank deficient, which necessarily happens for $p>n$, then problem~\eqref{eq:nominal-problem} is unbounded. Indeed, expressing the sample covariance matrix as $\covsa = R\Lambda R^\top$ with $R$ orthogonal and $\Lambda\succeq 0$ diagonal, we may set $X_k=R \Lambda_k R^\top$ for any $k\in\mathbb N$, where $\Lambda_k\succ 0$ is the diagonal matrix with $(\Lambda_k)_{ii}=1$ if $\covsaeig_i>0$ and $(\Lambda_k)_{ii}=k$ if $\covsaeig_i=0$. By construction, the objective value of $X_k$ in~\eqref{eq:nominal-problem} tends to $-\infty$ as $k$ grows. If $\covsa$ is invertible, on the other hand, then the first-order optimality conditions can be solved analytically, showing that the minimum of problem~\eqref{eq:nominal-problem} is attained at $\est=\covsa^{-1}$. This implies that maximum likelihood estimation under normality simply recovers the sample covariance matrix but fails to yield a precision matrix estimator for $p>n$.

Adding an $\ell_1$-regularization term to its objective function guarantees that problem~\eqref{eq:nominal-problem} has a unique minimizer $\est\succ 0$, which constitutes a proper (invertible) precision matrix estimator~\cite{ref:Hsieh-2014}. Moreover, as the $\ell_1$-norm represents the convex envelope of the cardinality function on the unit hypercube, the $\ell_1$-norm regularized maximum likelihood estimation problem promotes sparse precision matrices that encode interpretable Gaussian graphical models~\cite{ref:Banerjee-2008, ref:Friedman-2008}. Indeed, under the given normality assumption one can show that $X_{ij}=0$ if and only if the random variables $\xi_i$ and $\xi_j$ are conditionally independent given $\{\xi_k\}_{k\notin\{i,j\}}$ \cite{ref:Lauritzen-1996}. The sparsity pattern of the precision matrix $X$ thus captures the conditional independence structure of $\xi$. 

In theory, the $\ell_1$-norm regularized maximum likelihood estimation problem can be solved in polynomial time via modern interior point algorithms. In practice, however, scalability to high dimensions remains challenging due to the problem's semidefinite nature, and larger problem instances must be addressed with special-purpose methods such as the Newton-type QUIC algorithm \cite{ref:Hsieh-2014}. 

Instead of penalizing the $\ell_1$-norm of the precision matrix, one may alternatively penalize its inverse $X^{-1}$ with the goal of promoting sparsity in the covariance matrix and thus controlling the {\em marginal} independence structure of $\xi$ \cite{ref:BienTibshirani-2011}. Despite its attractive statistical properties, this alternative model leads to a hard non-convex and non-smooth optimization problem, which can only be solved approximately. 


By the Fisher-Neyman factorization theorem, $\covsa$ is a sufficient statistic for the true covariance matrix $\cov$ of a normally distributed random vector, that is, $\covsa$ contains the same information about $\cov$ as the entire training dataset. Without any loss of generality, we may thus focus on estimators that depend on the data only through $\covsa$. If neither the covariance matrix $\cov$ nor the precision matrix $\cov^{-1}$ are known to be sparse and if there is no prior information about the orientation of their eigenvectors, it is reasonable to restrict attention to {\em rotation equivariant} estimators. 
A precision matrix estimator $\wh X(\covsa)$ 
is called rotation equivariant if 
$\wh X(R\covsa R^\top) = R \wh X(\covsa) R^{\top}$ for any rotation matrix~$R$. This definition requires that the estimator for the rotated data coincides with the rotated estimator for the original data. One can show that rotation equivariant estimators have the same eigenvectors as the sample covariance matrix (see, {\em e.g.}, \cite[Lemma~5.3]{ref:Perlman-07} for a simple proof) and are thus uniquely determined by their eigenvalues. Hence, imposing rotation equivariance reduces the degrees of freedom from $p(p+1)/2$ to $p$. Using an entropy loss function introduced in~\cite{ref:Stein-1961}, Stein was the first to demonstrate that superior covariance estimators in the sense of statistical decision theory can be constructed by shrinking the eigenvalues of the sample covariance matrix~\cite{ref:Stein-1975, ref:Stein-1986}. Unfortunately, his proposed shrinkage transformation may alter the order of the eigenvalues and even undermine the positive semidefiniteness of the resulting estimator when $p>n$, which necessitates an {\em ad hoc} correction step involving an isotonic regression. Various refinements of this approach are reported in \cite{ref:Dey-1985, ref:Haff-91, ref:Yang-94} and the references therein, but most of these works focus on the low-dimensional case when~$n \ge p$.

Jensen's inequality suggests that the largest (smallest) eigenvalue of the sample covariance matrix $\covsa$ is biased upwards (downwards), which implies that $\covsa$ tends to be ill-conditioned \cite{ref:vanderVaart-61}. This effect is most pronounced for $\cov\approx I$. A promising shrinkage estimator for the covariance matrix is thus obtained by forming a convex combination of the sample covariance matrix and the identity matrix scaled by the average of the sample eigenvalues \cite{ref:Ledoit-2004}. If its convex weights are chosen optimally in view of the Frobenius risk, the resulting shrinkage estimator can be shown to be both well-conditioned and more accurate than $\covsa$. Alternative shrinkage targets 
include the constant correlation model, which preserves the sample variances but equalizes all pairwise correlations \cite{ref:Ledoit-2004:honey}, the single index model, which assumes that each random variable is explained by one systematic and one idiosyncratic risk factor \cite{ref:Ledoit-03}, or the diagonal matrix of the sample eigenvalues~\cite{ref:Touloumis-15}~etc. 

The {\em linear} shrinkage estimators described above are computationally attractive because evaluating convex combinations is cheap. Computing the corresponding precision matrix estimators requires a matrix inversion and is therefore more expensive. We emphasize that linear shrinkage estimators for the precision matrix itself, obtained by forming a cheap convex combination of the inverse sample covariance matrix and a shrinkage target, are not available in the big data regime when $p>n$ and $\covsa$ fails to be invertible.

More recently, insights from random matrix theory have motivated a new rotation equivariant shrinkage estimator that applies an individualized shrinkage intensity to every sample eigenvalue \cite{ref:Ledoit-2012}. While this {\em nonlinear} shrinkage estimator offers significant improvements over linear shrinkage, its evaluation necessitates the solution of a hard nonconvex optimization problem, which becomes cumbersome for large values of~$p$. Alternative nonlinear shrinkage estimators can be obtained by imposing an upper bound on the condition number of the covariance matrix in the underlying maximum likelihood estimation problem \cite{ref:Won-2013}.

Alternatively, multi-factor models familiar from the arbitrage pricing theory can be used to approximate the covariance matrix by a sum of a low-rank and a diagonal component, both of which have only few free parameters and are thus easier to estimate. Such a dimensionality reduction leads to stable estimators~\cite{ref:Chun-18, ref:Fan-2008}.

This paper endeavors to develop a principled approach to precision matrix estimation, which is inspired by recent advances in distributionally robust optimization \cite{ref:DelYe-10, ref:GohSim-10, ref:WieKuhSim-14}. For the sake of argument, assume that the true distribution of $\xi$ is given by $\trueP= \mc N(\mu_0,\cov_0)$, where $\cov_0\succ 0$. If $\mu_0$ and $\cov_0$ were known, the quality of some estimators $\mu$ and $X$ for $\mu_0$ and $\cov_0^{-1}$, respectively, could conveniently be measured by Stein's loss~\cite{ref:Stein-1961}
\begin{align}
	L(X,\mu) & \Let -\log \det (\cov_0 X) + \Tr{\cov_0 X}+(\mu_0 - \mu)^\top X(\mu_0-\mu)-p \notag \\
	& = -\log \det X + \EE^{\trueP} \left[(\xi-\mu)^\top X (\xi-\mu) \right] -\log \det \cov_0-p, \label{eq:stein-loss}
\end{align}
which is reminiscent of the log-likelihood function~\eqref{eq:log-likelihood}. It is easy to verify that Stein's loss is nonnegative for all $\mu\in\R^p$ and $X\in\PSD^p$ and vanishes only at the true mean $\mu=\mu_0$ and the true precision matrix~$X=\cov^{-1}_0$. Of course, we cannot minimize Stein's loss directly because $\trueP$ is unknown. As a na\"ive remedy, one could instead minimize an approximation of Stein's loss obtained by removing the (unknown but irrelevant) normalization constant $-\log\det\cov_0-p$ and replacing $\trueP$ in~\eqref{eq:stein-loss} with the empirical distribution $\empiP= \mc N(\wh \mu, \covsa)$. However, in doing so we simply recover the standard maximum likelihood estimation problem, which is unbounded for $p>n$ and outputs the sample mean and the inverse sample covariance matrix for $p\le n$. This motivates us to robustify the empirical loss minimization problem by exploiting that $\empiP$ is close to $\trueP$ in Wasserstein distance.

\begin{definition}[Wasserstein distance]
	The type-2 Wasserstein distance between two arbitrary distributions $\trueP_1$ and $\trueP_2$ on $\R^p$ with finite second moments is defined as
	\[
	\Wass(\trueP_1, \trueP_2) \Let \Inf{\Pi} \left\{ 
	\left(
	\int_{\R^p\times \R^p} \norm{\xi_1 - \xi_2}^2\, \Pi(\rm{d}\xi_1, \rm{d} \xi_2) 
	\right)^{\frac{1}{2}}
	: \begin{array}{l}
	\Pi \text{ is a joint distribution of $\xi_1$ and $\xi_2$} \\
	\text{with marginals $\trueP_1$ and $\trueP_2$, respectively}
	\end{array}
	\right\}.
	\]
\end{definition}

The squared Wasserstein distance between $\trueP_1$ and $\trueP_2$ can be interpreted as the cost of moving the distribution $\trueP_1$ to the distribution $\trueP_2$, where $\|\xi_1 - \xi_2\|^2$ quantifies the cost of moving unit mass from~$\xi_1$ to~$\xi_2$. 

A central limit type theorem for the Wasserstein distance between empirical normal distributions implies that $n\cdot\Wass(\empiP, \trueP)^2$ converges weakly to a quadratic functional of independent normal random variables as the number $n$ of training samples tends to infinity \cite[Theorem~2.3]{ref:Rippl-16}. We may thus conclude that for every $\eta\in(0,1)$ there exists $q(\eta)> 0$ such that $\trueP^n[\Wass(\empiP, \trueP)\le q(\eta)n^{-\frac{1}{2}}]\ge 1-\eta$ for all $n$ large enough. In the following we denote by $\mc N^p$ the family of all normal distributions on $\R^p$ and by
\[
	\Ambi = \{ \Q \in \mc N^p : \Wass(\empiP, \Q) \leq \rho\}
\]
the ambiguity set of all normal distributions whose Wasserstein distance to $\empiP$ is at most $\rho\ge 0$. Note that~$\Ambi$ depends on the unknown true distribution $\trueP$ only through the training data and, for $\rho\ge q(\eta) n^{-\frac{1}{2}}$, contains~$\trueP$ with confidence $1-\eta$ asymptotically as $n$ tends to infinity. It is thus natural to formulate a {\em distributionally robust} estimation problem for the precision matrix that minimizes Stein's loss---modulo an irrelevant normalization constant---in the worst case across all reference distributions $\Q\in\Ambi$.
\begin{align}
	\label{eq:dro}
	\J(\wh \mu, \covsa) \Let \inf_{\mu\in\R^p,\,X\in\mc X} \left\{ -\log \det X + \Sup{\Q \in \Ambi}\EE^{\Q} \left[(\xi-\mu)^\top X (\xi-\mu) \right] \right\}
\end{align}
Here, $\mc X\subseteq \PD^p$ denotes the set of admissible precision matrices. In the absence of any prior structural information, the only requirement is that $X$ be positive semidefinite and invertible, in which case~$\mc X=\PD^p$. Known conditional independence relationships impose a sparsity pattern on $X$, which is easily enforced through linear equality constraints in $\X$. By adopting a worst-case perspective, we hope that the minimizers of~\eqref{eq:dro} will have low Stein's loss with respect to all distributions in~$\Ambi$ including the unknown true distribution~$\trueP$. As Stein's loss with respect to the empirical distribution is proportional to the log-likelihood function~\eqref{eq:log-likelihood}, problem~\eqref{eq:dro} can also be interpreted as a robust maximum likelihood estimation problem that hedges against perturbations in the training samples. As we will show below, this robustification is tractable and has a regularizing effect.

Recently it has been discovered that distributionally robust optimization models with Wasserstein ambiguity sets centered at {\em discrete} distributions on $\R^p$ (and {\em without} any normality restrictions) are often equivalent to tractable convex programs \cite{ref:MohajerinEsfahani-2017, ref:zhao-15}. Extensions of these results to general Polish spaces are reported in~\cite{ref:blanchet-16, ref:Gao-16}. The explicit convex reformulations of Wasserstein distributionally robust models have not only facilitated efficient solution procedures but have also revealed insightful connections between distributional robustness and regularization in machine learning. Indeed, many classical regularization schemes of supervised learning such as the Lasso method can be explained by a Wasserstein distributionally robust model. This link was first discovered in the context of logistic regression \cite{ref:shafieezadeh-15} and later extended to other popular regression and classification models \cite{ref:blanchet-16, ref:shafieezadeh-17} and even to generative adversarial networks in deep learning~\cite{ref:gao-17}.

Model~\eqref{eq:dro} differs fundamentally from all existing distributionally robust optimization models in that the ambiguity set contains only normal distributions. As the family of normal distributions fails to be closed under mixtures, the ambiguity set is thus nonconvex. In the remainder of the paper we devise efficient solution methods for problem~\eqref{eq:dro}, and we investigate the properties of the resulting precision matrix estimator. 

The main contributions of this paper can be summarized as follows.
\begin{itemize}
\item Leveraging an analytical formula for the Wasserstein distance between two normal distributions derived in~\cite{ref:Givens-84}, we prove that the distributionally robust estimation problem~\eqref{eq:dro} is equivalent to a tractable semidefinite program---despite the nonconvex nature of the underlying ambiguity set. 
\item We prove that problem~\eqref{eq:dro} and its unique minimizer depend on the training data only through $\covsa$ (but not through $\wh\mu$), which is reassuring because $\covsa$ is a sufficient statistic for the precision matrix.
\item In the absence of any structural information, we demonstrate that problem~\eqref{eq:dro} has an analytical solution that is naturally interpreted as a nonlinear shrinkage estimator. Indeed, the optimal precision matrix estimator shares the eigenvectors of the sample covariance matrix, and as the radius $\rho$ of the Wasserstein ambiguity set grows, its eigenvalues are shrunk towards 0 while preserving their order. At the same time, the condition number of the optimal estimator steadily improves and eventually converges to~1 even for~$p>n$. These desirable properties are not enforced {\em ex ante} but emerge naturally from the underlying distributionally robust optimization model.
\item In the presence of conditional independence constraints, the semidefinite program equivalent to~\eqref{eq:dro} is beyond the reach of general purpose solvers for practically relevant problem dimensions~$p$. We thus devise an efficient sequential quadratic approximation method reminiscent of the QUIC algorithm~\cite{ref:Hsieh-2014}, which can solve instances of problem~\eqref{eq:dro} with $p\lesssim 10^4$ on a standard PC.
\item We derive an analytical formula for the extremal distribution that attains the supremum in~\eqref{eq:dro}.
\end{itemize}

The paper is structured as follows. Section~\ref{sec:tractability} demonstrates that the distributionally robust estimation problem~\eqref{eq:dro} admits an exact reformulation as a tractable semidefinite program. Section~\ref{sec:analytical} derives an analytical solution of this semidefinite program in the absence of any structural information, while Section~\ref{sec:numerical} develops an efficient sequential quadratic approximation algorithm for the problem with conditional independence constraints. The extremal distribution that attains the worst-case expectation in~\eqref{eq:dro} is characterized in Section~\ref{sec:wc-dist}, and numerical experiments based on synthetic and real data are reported in Section~\ref{sec:num-res}.

\textbf{Notation.} For any $A\in \R^{p\times p}$ we use $\Tr{A}$ to denote the trace and $\|A\|=\sqrt{\Tr{A^\top A}}$ to denote the Frobenius norm of $A$. By slight abuse of notation, the Euclidean norm of $v\in\R^p$ is also denoted by $\|v\|$. Moreover, $I$ stands for the identity matrix. Its dimension is usually evident from the context. For any $A,B\in\R^{p\times p}$, we use $\inner{A}{B} = \Tr{A^\top B}$ to denote the inner product and $A\otimes B\in\R^{p^2\times p^2}$ to denote the Kronecker product of $A$ and $B$. The space of all symmetric matrices in $\R^{p\times p}$ is denoted by $\mathbb S^p$. We use $\PSD^p$ ($\mathbb S_{++}^p$) to represent the cone of symmetric positive semidefinite (positive definite) matrices in $\mathbb S^p$. For any $A,B\in\mathbb S^p$, the relation $A\succeq B$ ($A\succ B$) means that $A-B\in\PSD^p$ ($A-B\in\mathbb S^p_{++}$).


\section{Tractable Reformulation}
\label{sec:tractability}
Throughout this paper we assume that the random vector $\xi\in \R^p$ is normally distributed. This is in line with the common practice in statistics and in the natural and social sciences, whereby normal distributions are routinely used to model random vectors whose distributions are unknown. The normality assumption is often justified by the central limit theorem, which suggests that random vectors influenced by many small and unrelated disturbances are approximately normally distributed. Moreover, the normal distribution maximizes entropy across all distributions with given first- and second-order moments, and as such it constitutes the least prejudiced distribution compatible with a given mean vector and covariance matrix.

In order to facilitate rigorous statements, we first provide a formal definition of normal distributions.

\begin{definition}[Normal distributions]
\label{def:normal-dist} 
	We say that $\mbb P$ is a normal distribution on $\R^p$ with mean $\mu\in\R^p$ and covariance matrix $\cov\in\PSD^p$, that is, $\mbb P=\mc N(\mu,\cov)$, if $\mbb P$ is supported on $\text{\rm supp}(\mbb P)=\{\mu+Ev: v\in\R^k\}$, and if the density function of $\mbb P$ with respect to the Lebesgue measure on $\text{\rm supp}(\mbb P)$ is given by
	\[
	\varrho_{\mbb P}(\xi) \Let \frac{1}{\sqrt{(2\pi)^k\det(D)}} e^{-(\xi-\mu)^\top ED^{-1}E^\top(\xi-\mu)},
	\]
	where $k=\text{\rm rank}(\cov)$, $D\in \PD^k$ is the diagonal matrix of the positive eigenvalues of $\cov$, and $E\in\R^{p\times k}$ is the matrix whose columns correspond to the orthonormal eigenvectors of the positive eigenvalues of $\cov$. The family of all normal distributions on $\R^p$ is denoted by $\mc N^p$, while the subfamily of all distributions in $\mc N^p$ with zero means and arbitrary covariance matrices is denoted by~$\mc N^p_0$.
\end{definition}

Definition~\ref{def:normal-dist} explicitly allows for degenerate normal distributions with rank deficient covariance matrices.


The normality assumption also has distinct computational advantages. In fact, while the Wasserstein distance between two generic distributions is only given implicitly as the solution of a mass transportation problem, the Wasserstein distance between two normal distributions is known in closed form. It can be expressed explicitly as a function of the mean vectors and covariance matrices of the two distributions.

\begin{proposition}[Givens and Shortt {\cite[Proposition~7]{ref:Givens-84}}]
	\label{prop:Wass}
	 The type-2 Wasserstein distance between two normal distributions $\mbb P_1=\mc N(\mu_1,\cov_1)$ and $\mbb P_2= \mc N(\mu_2,\cov_2)$ with $\mu_1,\mu_2\in\R^p$ and $\cov_1,\cov_2\in\PSD^p$ amounts to
	\[ 
	\Wass(\mbb P_1, \mbb P_2) = \sqrt{\norm{\mu_1 - \mu_2}^2 + \Tr{\cov_1} + \Tr{\cov_2} - 2\Tr{ \sqrt{\sqrt{\cov_2} \cov_1 \sqrt{\cov_2}}}}.
	\]
\end{proposition}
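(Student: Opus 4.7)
The plan is to convert this infinite-dimensional mass transportation problem into a finite-dimensional matrix optimization over the cross-covariance of a coupling, and then to solve that optimization via matrix-norm duality. First I would normalize the means by the expansion
$\|\xi_1-\xi_2\|^2 = \|(\xi_1-\mu_1)-(\xi_2-\mu_2)\|^2 + 2(\mu_1-\mu_2)^\top((\xi_1-\mu_1)-(\xi_2-\mu_2)) + \|\mu_1-\mu_2\|^2$,
whose linear cross term vanishes in expectation under any coupling with the prescribed marginals; this reduces the problem to the centered case $\mu_1=\mu_2=0$ at the cost of an additive constant $\|\mu_1-\mu_2\|^2$, which matches the term appearing under the square root in the claimed formula.

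In the centered case, every coupling $\Pi$ satisfies $\EE_\Pi\|\xi_1-\xi_2\|^2=\Tr{\cov_1}+\Tr{\cov_2}-2\Tr{C}$ with $C\Let\EE_\Pi[\xi_1\xi_2^\top]$, so the cost depends on $\Pi$ only through~$C$. I would then characterize the admissible $C$: a matrix $C\in\R^{p\times p}$ arises as a cross-covariance of some coupling of $\mc N(0,\cov_1)$ and $\mc N(0,\cov_2)$ if and only if $\bigl(\begin{smallmatrix}\cov_1 & C \\ C^\top & \cov_2\end{smallmatrix}\bigr)\succeq 0$; necessity holds because every covariance matrix is positive semidefinite, and sufficiency is witnessed by the Gaussian coupling whose joint covariance is exactly this block matrix. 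The problem is therefore reduced to maximizing $\Tr{C}$ under this linear matrix inequality.

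Assuming first $\cov_1,\cov_2\succ 0$, the Schur complement rewrites the constraint as $\cov_2\succeq C^\top\cov_1^{-1}C$, and the substitution $K\Let\cov_1^{-1/2}C\cov_2^{-1/2}$ turns it into $K^\top K\preceq I$ with objective $\Tr{\cov_2^{1/2}\cov_1^{1/2}K}$. Setting $M\Let\cov_2^{1/2}\cov_1^{1/2}$, this is the classical optimization $\sup\{\Tr{MK}:K^\top K\preceq I\}$, whose value equals the nuclear norm $\Tr{\sqrt{M^\top M}}=\Tr{\sqrt{\cov_1^{1/2}\cov_2\cov_1^{1/2}}}$ by the duality between the spectral and nuclear matrix norms (equivalently, by von Neumann's trace inequality), with an optimizer given by the transpose of the orthogonal factor in the polar decomposition of~$M$. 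Since $AA^\top$ and $A^\top A$ share all nonzero eigenvalues, this quantity also equals $\Tr{\sqrt{\cov_2^{1/2}\cov_1\cov_2^{1/2}}}$, which together with the centering reduction delivers the claimed formula.

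The main obstacle I anticipate is the rank-deficient case, where $\cov_1$ or $\cov_2$ is not invertible and the substitution involving $\cov_j^{-1/2}$ breaks down. I would resolve this by a perturbation argument: replace $\cov_j$ by $\cov_j+\eps I\succ 0$, apply the non-degenerate formula, and pass to the limit $\eps\downarrow 0$, using continuity of the matrix square root on $\PSD^p$ together with continuity of the type-2 Wasserstein distance along sequences of measures with uniformly bounded second moments. A minor point worth emphasizing is that the argument never restricts a priori to Gaussian couplings: the upper bound on $\Tr{C}$ holds for every admissible (possibly non-Gaussian) coupling and is attained by a specific Gaussian coupling, so the infimum over all couplings and over Gaussian couplings coincide.
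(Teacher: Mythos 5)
Your proof is correct. The paper itself does not prove this proposition; it cites it as a known result from Givens and Shortt, so there is no in-paper proof to compare against. Your argument is the standard one: center the means (the cross term cancels in expectation under any coupling with the prescribed marginals), reduce the transport cost to a linear functional $\Tr{\cov_1}+\Tr{\cov_2}-2\Tr{C}$ of the cross-covariance $C$, characterize admissible $C$ exactly by the joint PSD constraint $\left(\begin{smallmatrix}\cov_1 & C\\ C^\top & \cov_2\end{smallmatrix}\right)\succeq 0$ (with sufficiency witnessed by the Gaussian coupling), and maximize $\Tr{C}$ via the Schur complement, the normalization $K=\cov_1^{-1/2}C\cov_2^{-1/2}$, and nuclear/spectral norm duality. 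The observation that $AA^\top$ and $A^\top A$ have identical spectra to pass between $\cov_1^{1/2}\cov_2\cov_1^{1/2}$ and $\cov_2^{1/2}\cov_1\cov_2^{1/2}$ is exactly right, as is the perturbation argument for singular covariance matrices and the remark that the maximizing coupling happens to be Gaussian while the upper bound holds over all couplings. One small point worth being careful about when you write this up in full: the constraint $K^\top K\preceq I$ is the operator-norm unit ball, and the identity $\sup\{\Tr{MK}:\|K\|_{\rm op}\le 1\}=\Tr{\sqrt{M^\top M}}$ should be justified either by von Neumann's trace inequality or by an explicit SVD/polar-decomposition computation, as you indicate; also note that when $M$ is singular the polar factor $U$ is not unique, but any orthogonal completion still attains the optimum, which is what the limiting argument ultimately certifies.
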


If $\mbb P_1$ and $\mbb P_2$ share the same mean vector ({\em e.g.}, if $\mu_1=\mu_2=0$), then the Wasserstein distance $\Wass(\mbb P_1, \mbb P_2)$ reduces to a function of the covariance matrices $\cov_1$ and $\cov_2$ only, thereby inducing a metric on the cone~$\PSD^p$.

\begin{definition}[Induced metric on $\PSD^p$]
	\label{def:Wass-for-S}
	 Let $\V:\PSD^p\times \PSD^p\rightarrow \R_+$ be the metric on $\PSD^p$ induced by the type-2 Wasserstein metric on the family of normal distributions with equal means. Thus, for all $\cov_1,\cov_2\in\PSD^p$ we set
	\[ 
	\V(\cov_1, \cov_2) \Let \sqrt{\Tr{\cov_1} + \Tr{\cov_2} - 2\Tr{ \sqrt{\sqrt{\cov_2} \cov_1 \sqrt{\cov_2}}}}.
	\]
\end{definition}

The definition of $\V$ implies via Proposition~\ref{prop:Wass} that $\Wass(\mbb P_1, \mbb P_2)=\V(\cov_1, \cov_2)$ for all $\mbb P_1=\mc N(\mu_1,\cov_1)$ and $\mbb P_2= \mc N(\mu_2,\cov_2)$ with $\mu_1=\mu_2$. Thanks to its interpretation as the restriction of $\Wass$ to the space of normal distributions with a fixed mean, it is easy to verify that $\V$ is symmetric and positive definite and satisfies the triangle inequality. In other words, $\V$ inherits the property of being a metric from $\Wass$.

\begin{corollary}[Commuting covariance matrices]
	\label{cor:commuting}
	If $\cov_1, \cov_2\in\PSD^p$ commute ($\cov_1 \cov_2= \cov_2 \cov_1$), then the induced Wasserstein distance $\V$ simplifies to the trace norm between the square roots of $\cov_1$ and $\cov_2$, that is, 
	\[
	\V(\cov_1, \cov_2) = \norm{\sqrt{\cov_1} - \sqrt{\cov_2}}.
	\]
\end{corollary}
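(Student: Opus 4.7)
The plan is to directly simplify the trace term $\Tr{\sqrt{\sqrt{\cov_2}\cov_1\sqrt{\cov_2}}}$ appearing in the definition of $\V$, exploiting the fact that commuting symmetric matrices are simultaneously diagonalizable. Since $\cov_1$ and $\cov_2$ commute, there exists an orthogonal matrix $U\in\R^{p\times p}$ and diagonal $D_1,D_2\in\PSD^p$ such that $\cov_i=UD_iU^\top$ for $i=1,2$. Consequently $\sqrt{\cov_i}=U\sqrt{D_i}U^\top$, and $\sqrt{\cov_1}$ and $\sqrt{\cov_2}$ also commute.

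Next I would compute
\[
\sqrt{\cov_2}\,\cov_1\,\sqrt{\cov_2} \;=\; \sqrt{\cov_2}\,\sqrt{\cov_1}\,\sqrt{\cov_1}\,\sqrt{\cov_2} \;=\; \bigl(\sqrt{\cov_1}\,\sqrt{\cov_2}\bigr)^2,
\]
where the second equality uses the commutativity of the square roots. The matrix $\sqrt{\cov_1}\,\sqrt{\cov_2}=U\sqrt{D_1}\sqrt{D_2}U^\top$ is itself symmetric positive semidefinite, so it is the unique principal square root of $\sqrt{\cov_2}\cov_1\sqrt{\cov_2}$; that is,
\[
\sqrt{\sqrt{\cov_2}\,\cov_1\,\sqrt{\cov_2}} \;=\; \sqrt{\cov_1}\,\sqrt{\cov_2}.
\]
Plugging this identity into Definition~\ref{def:Wass-for-S} then gives
\[
\V(\cov_1,\cov_2)^2 \;=\; \Tr{\cov_1}+\Tr{\cov_2}-2\Tr{\sqrt{\cov_1}\,\sqrt{\cov_2}}.
\]

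Finally, since $\sqrt{\cov_i}$ is symmetric, expanding the squared Frobenius norm yields
\[
\bigl\|\sqrt{\cov_1}-\sqrt{\cov_2}\bigr\|^2 \;=\; \Tr{\cov_1}+\Tr{\cov_2}-2\Tr{\sqrt{\cov_1}\,\sqrt{\cov_2}},
\]
which matches the expression above. Taking square roots (both quantities are nonnegative since $\V$ is a metric) yields the claim.

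I do not expect any serious obstacles: the only subtle step is the identification of the principal square root in the second paragraph, which rests on the standard fact that the product of two commuting positive semidefinite matrices is itself positive semidefinite and hence coincides with its unique PSD square root.
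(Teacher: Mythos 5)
Your proof is correct and follows essentially the same route as the paper: simplify $\sqrt{\cov_2}\,\cov_1\,\sqrt{\cov_2}$ using commutativity, identify its PSD square root as $\sqrt{\cov_1}\sqrt{\cov_2}$, and expand the resulting trace expression into the Frobenius norm. The paper's proof is more compact (it writes $\sqrt{\cov_2}\cov_1\sqrt{\cov_2}=\cov_1\cov_2$ directly and leaves the identification $\sqrt{\cov_1\cov_2}=\sqrt{\cov_1}\sqrt{\cov_2}$ implicit), whereas you make the underlying simultaneous-diagonalization argument and the uniqueness of the principal square root explicit---a welcome bit of extra rigor, but not a different method.
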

\begin{proof}
The commutativity of $\cov_1$ and $\cov_2$ implies that $\sqrt{\cov_2} \cov_1 \sqrt{\cov_2}=\cov_1\cov_2$, whereby
\begin{align*}
	\V(\cov_1, \cov_2) &= \sqrt{\Tr{\cov_1} + \Tr{\cov_2} - 2\Tr{ \sqrt{\cov_1 \cov_2}}} =\sqrt{\Tr{\left (\sqrt{\cov_1}- \sqrt{\cov_2} \right)^2}}= \norm{\sqrt{\cov_1} - \sqrt{\cov_2}}.
\end{align*}
Thus, the claim follows.
\end{proof}

Proposition~\ref{prop:Wass} reveals that the Wasserstein distance between any two (possibly degenerate) normal distributions is finite. In contrast, the Kullback-Leibler divergence between degenerate and non-degenerate normal distributions is infinite.

\begin{remark}[Kullback-Leibler divergence between normal distributions] A simple calculation shows that the Kullback-Leibler divergence from $\mbb P_2= \mc N(\mu_2,\cov_2)$ to $\mbb P_1=\mc N(\mu_1,\cov_1)$ amounts to
		\[
		D_{\rm KL}(\mbb P_1 \| \mbb P_2) = \frac{1}{2} \left[ (\mu_2 - \mu_1)^\top \cov_2^{-1} (\mu_2 - \mu_1) + \Tr{\cov_1 \cov_2^{-1}}  - p - \log \det \cov_1 + \log \det \cov_2 \right]
		\]
		whenever $\mu_1,\mu_2\in\R^p$ and $\cov_1,\cov_2\in\PD^p$. If either $\mbb P_1$ or $\mbb P_2$ is degenerate (that is, if $\cov_1$ is singular and $\cov_2$ invertible or vice versa), then $\mbb P_1$ fails to be absolutely continuous with respect to $\mbb P_2$, which implies that $D_{\rm KL}(\mbb P_1 \| \mbb P_2)=\infty$. Moreover, from the above formula it is easy to verify that $D_{\rm KL}(\mbb P_1 \| \mbb P_2)$ diverges if either $\cov_1$ or $\cov_2$ tends to a singular matrix. 
\end{remark}

In the big data regime ($p>n$) the sample covariance matrix $\covsa$ is singular even if the samples are drawn from a non-degernerate normal distribution $\trueP= \mc N(\mu,\cov)$ with $\cov\in\PD^p$. In this case, the Kullback-Leibler distance between the empirical distribution $\wh{\mbb P}= \mc N(\wh \mu,\covsa)$ and $\trueP$ is infinite, and thus $\wh{\mbb P}$ and $\trueP$ are perceived as maximally dissimilar despite their intimate relation. In contrast, their Wasserstein distance is finite.

In the remainder of this section we develop a tractable reformulation for the distributionally robust estimation problem~\eqref{eq:dro}. Before investigating the general problem, we first address a simpler problem variant where the true mean $\mu_0$ of $\xi$ is known to vanish. Thus, we temporarily assume that $\xi$ follows $\mc N(0,\cov_0)$. In this setting, it makes sense to focus on the modified ambiguity set $\Ambi^0 \Let \{ \Q \in \mc N^p_0:\Wass(\Q, \wh{\mbb P}) \leq \rho\}$, which contains all normal distributions with zero mean that have a Wasserstein distance of at most $\rho\ge 0$ from the empirical distribution $\wh{\mbb P}= \mc N(0,\covsa)$. Under these assumptions, the estimation problem~\eqref{eq:dro} thus simplifies to
\be
\label{eq:DROSimplified}
\begin{array}{clcl}
	\J(\covsa) \Let 
	\Inf{X \in \X} \left\{ -\log \det X + \Sup{ \Q \in \Ambi^0} \EE^\Q[ \inner{\xi \xi^\top}{X}] \right\}. 
\end{array}
\ee

We are now ready to state the first main result of this section.

\begin{theorem}[Convex reformulation]
	\label{thm:refor}
	For any fixed $\rho> 0$ and $\covsa\succeq 0$, the simplified distributionally robust estimation problem~\eqref{eq:DROSimplified} is equivalent to 
	\begin{equation}
	\label{eq:Reformulation2} 
	\J (\covsa) = 
	\left\{
	\begin{array}{cl}
	\Inf{X, \dualvar} & -\log \det X  + \dualvar \left( \rho^2 - \Tr{\covsa} \right) + \dualvar^2 \inner{(\dualvar I - X)^{-1}}{\covsa} \\
	\st & \dualvar I \succ X \succ 0, \quad X \in \X \,.
	\end{array}
	\right.
	\end{equation}
	Moreover, the optimal value function~$\J(\covsa)$ is continuous in $\covsa \in \PSD$.
\end{theorem}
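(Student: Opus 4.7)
The plan is to apply Lagrangian duality to the inner worst-case expectation in~\eqref{eq:DROSimplified} and to evaluate the resulting unconstrained maximization over the covariance matrix in closed form via a Sylvester equation.

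First, I would parameterize $\Q\in\Ambi^0$ by its covariance matrix $\cov\in\PSD^p$. Since $\EE^\Q[\xi\xi^\top]=\cov$ and, by Proposition~\ref{prop:Wass} combined with Definition~\ref{def:Wass-for-S}, $\Wass(\Q,\empiP)=\V(\cov,\covsa)$, the inner supremum in~\eqref{eq:DROSimplified} becomes
\[
\Sup{\substack{\cov\in\PSD^p\\ \V(\cov,\covsa)\le \rho}}\Tr{\cov X}.
\]
The trace of a matrix square root is concave on $\PSD^p$, whence $\cov\mapsto \V(\cov,\covsa)^2$ is convex and the feasible set is convex. Since $\rho>0$, the matrix $\cov=\covsa+\delta I$ is strictly feasible for all sufficiently small $\delta>0$, so Slater's condition holds and strong duality applies. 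Dualizing the Wasserstein constraint with a multiplier $\gamma\ge 0$ rewrites the supremum as
\[
\Inf{\gamma\ge 0}\;\gamma(\rho^2-\Tr{\covsa})+\Sup{\cov\succeq 0}\bigl\{\Tr{\cov(X-\gamma I)}+2\gamma\Tr{\sqrt{\sqrt{\covsa}\cov\sqrt{\covsa}}}\bigr\}.
\]

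Second, I would evaluate the inner unconstrained supremum in closed form. A rank-one scaling $\cov=t v v^\top$ with $t\to\infty$ shows the supremum to be $+\infty$ whenever $X\not\prec\gamma I$, so I may restrict to $X\prec\gamma I$. Treating $\covsa\succ 0$ first, the substitution $A\Let\sqrt{\covsa}\cov\sqrt{\covsa}\in\PSD^p$ together with the factorization $A=P^2$ for $P\succeq 0$ reduces the supremum to
\[
\Sup{P\succeq 0}\;\Tr{PMP}+2\gamma\Tr{P},\qquad M\Let\covsa^{-1/2}(X-\gamma I)\covsa^{-1/2}\prec 0.
\]
The quadratic form $P\mapsto \Tr{PMP}$ is strictly concave (since $-M\succ 0$ and $\Tr{H(-M)H}=\|(-M)^{1/2}H\|^2>0$ for all nonzero symmetric $H$), so the objective has a unique maximizer, which must solve the first-order Sylvester equation $MP+PM=-2\gamma I$. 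The commuting ansatz $P=-\gamma M^{-1}\succ 0$ solves this equation and is therefore the maximizer. Substituting back yields the supremum value $-\gamma^2\Tr{M^{-1}}=\gamma^2\inner{(\gamma I-X)^{-1}}{\covsa}$, which is precisely the expression appearing in~\eqref{eq:Reformulation2}. Merging the dual infimum over $\gamma$ with the outer infimum over $X\in\X$ produces the joint program~\eqref{eq:Reformulation2}.

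Third, for singular $\covsa$ I would replace $\covsa$ by $\covsa+\varepsilon I\succ 0$, apply the identity just established, and pass to the limit $\varepsilon\downarrow 0$; Berge's maximum theorem ensures the primal inner supremum depends continuously on $\covsa$ (the feasible set varies Hausdorff-continuously), the closed-form dual expression is manifestly continuous in $\covsa\in\PSD^p$, and the identity therefore persists in the limit. Continuity of $\J$ on $\PSD^p$ then follows from the representation~\eqref{eq:Reformulation2}: $\J$ is the pointwise infimum of a family of functions jointly continuous in $(X,\gamma,\covsa)$ on the open set $\{\gamma I\succ X\succ 0\}$, giving upper semicontinuity for free, and a coercivity argument (using $-\log\det X$ to keep $X$ away from singular, the growth in $\gamma$ to bound $\gamma$ above, the constraint $\gamma I\succ X$ to bound $\gamma$ below by $\lambda_{\max}(X)$, and the term $\gamma^2\inner{(\gamma I-X)^{-1}}{\covsa}$ to keep $\gamma I-X$ away from singular) confines near-optimal $(X,\gamma)$ to a compact subset of this open set, delivering lower semicontinuity. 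The main obstacle will be the closed-form evaluation in the second step---rigorously justifying the Sylvester-equation argument (existence, uniqueness, and positive semidefiniteness of the maximizer)---and the uniformity needed to handle singular $\covsa$ consistently in the perturbation limit.
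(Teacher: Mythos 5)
Your proposal follows essentially the same route as the paper: it dualizes the Wasserstein constraint, substitutes a square-root variable so the inner maximization becomes a concave quadratic over $\PSD^p$, solves the first-order condition as a Lyapunov/Sylvester equation to get the closed form $-\gamma^2\Tr{M^{-1}}=\gamma^2\inner{(\gamma I-X)^{-1}}{\covsa}$, handles singular $\covsa$ by an $\varepsilon I$ perturbation, and obtains continuity of $\J$ by confining near-minimizers to a compact set and invoking Berge's theorem. The only spot where the paper works harder than your sketch is the lower-semicontinuity half of the perturbation limit for the dual SDP expression (the paper's Lemma~\ref{lem:cont}(ii) uses a uniform calmness bound rather than pointwise continuity), but you flag exactly that uniformity issue yourself, so the plan is sound.
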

The proof of Theorem~\ref{thm:refor} relies on several auxiliary results. A main ingredient to derive the convex program~\eqref{eq:Reformulation2} is a reformulation of the worst-case expectation function $\g:\PSD \times \PSD \ra \R$ defined through
\be
\label{eq:g:def}
\g (\covsa, X) \Let \Sup{ \Q \in \Ambi^0} \EE^\Q[ \inner{\xi \xi^\top}{X}]\,.
\ee
In Proposition~\ref{prop:g-refor} below we will demonstrate that $\g (\covsa, X)$ is continuous and coincides with the optimal value of an explicit semidefinite program, a result  which depends on the following preparatory lemma. 

\begin{lemma}[Continuity properties of partial infima]
	\label{lem:cont}
	Consider a function $\varphi:\mc E \times \Gamma \ra \R$ on two normed spaces $\mc E$ and $\Gamma$, and define the partial infimum with respect to $\gamma$ as~$\Phi(\eps) \Let \inf_{\dualvar \in \Gamma} \varphi(\eps,\dualvar)$ for every $\eps\in\mc E$.
	\begin{itemize}
		\item[(i)] If $\varphi (\eps,\dualvar)$ is continuous in $\eps$ at $\eps_0 \in \mc E$ for every $\dualvar \in \Gamma$, then~$\Phi(\eps)$ is upper-semicontinuous at $\eps_0$.
		
		\item[(ii)] If $\varphi (\eps,\dualvar)$ is calm from below at $\eps_0 \in \mc E$ uniformly in $\dualvar \in \Gamma$, that is, if there exists a constant $L\ge 0$ such that  $\varphi(\eps,\dualvar)- \varphi(\eps_0,\dualvar) \ge -L\|\eps_0 - \eps\|$ for all $\dualvar \in \Gamma$, then $\Phi(\eps)$ is lower-semicontinuous at~$\eps_0$.
	\end{itemize}
\end{lemma}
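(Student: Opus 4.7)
The plan is to prove the two semicontinuity claims by direct arguments that exploit the fact that a pointwise infimum automatically inherits upper bounds from individual slices and, symmetrically, that a uniform pointwise lower bound transfers to the infimum.

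For part (i), the goal is to show $\limsup_{\eps\to\eps_0}\Phi(\eps)\le\Phi(\eps_0)$. The key observation is that $\Phi(\eps)\le\varphi(\eps,\dualvar)$ holds for every $\dualvar\in\Gamma$, so I may test against any single slice of $\varphi$ and then optimize over the choice of slice. Concretely, given $\delta>0$, I pick a near-minimizer $\dualvar_\delta\in\Gamma$ with $\varphi(\eps_0,\dualvar_\delta)\le\Phi(\eps_0)+\delta/2$ (which exists whenever $\Phi(\eps_0)>-\infty$) and invoke continuity of the single map $\varphi(\cdot,\dualvar_\delta)$ at $\eps_0$ to produce a neighborhood $U_\delta$ of $\eps_0$ on which $\varphi(\eps,\dualvar_\delta)\le\varphi(\eps_0,\dualvar_\delta)+\delta/2$. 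Chaining these inequalities yields $\Phi(\eps)\le\Phi(\eps_0)+\delta$ for all $\eps\in U_\delta$, and sending $\delta\downarrow 0$ gives upper semicontinuity at $\eps_0$. The edge case $\Phi(\eps_0)=-\infty$ is handled analogously by choosing, for every $M>0$, some $\dualvar_M$ with $\varphi(\eps_0,\dualvar_M)\le -M$; continuity forces $\Phi(\eps)\le -M+1$ on a suitable neighborhood, and letting $M\to\infty$ drives $\limsup_{\eps\to\eps_0}\Phi(\eps)$ to $-\infty$.

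Part (ii) is essentially immediate once the calmness hypothesis is unpacked. For every $\dualvar\in\Gamma$,
\[
\varphi(\eps,\dualvar)\;\ge\;\varphi(\eps_0,\dualvar)-L\|\eps-\eps_0\|\;\ge\;\Phi(\eps_0)-L\|\eps-\eps_0\|,
\]
where the first step is the calmness bound and the second uses the defining property of $\Phi(\eps_0)$. Since the right-hand side is independent of $\dualvar$, taking the infimum over $\dualvar$ on the left preserves the inequality, giving $\Phi(\eps)\ge\Phi(\eps_0)-L\|\eps-\eps_0\|$; passing to $\liminf$ as $\eps\to\eps_0$ then yields lower semicontinuity. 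There is no genuinely difficult step in either part; the exercise mostly clarifies where each hypothesis is invoked. In (i) only pointwise continuity along the near-minimizers $\dualvar_\delta$ is required, matching exactly what the hypothesis supplies; in (ii) the uniformity of the constant $L$ in $\dualvar$ is essential, since a $\dualvar$-dependent constant $L(\dualvar)$ would not survive the infimum over $\dualvar$ and the resulting lower bound on $\Phi(\eps)$ could collapse.
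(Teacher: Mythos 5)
Your proof is correct and, unwrapped, is the same argument as the paper's. For part~(i) the paper writes $\limsup_{\eps\to\eps_0}\Phi(\eps)=\inf_{\delta>0}\sup_{\|\eps-\eps_0\|\le\delta}\inf_{\dualvar}\varphi(\eps,\dualvar)$ and then applies the weak minimax inequality $\sup\inf\le\inf\sup$ to pull the infimum over $\dualvar$ outside; your choice of a near-minimizer $\dualvar_\delta$ followed by continuity of the single slice $\varphi(\cdot,\dualvar_\delta)$ is exactly the $\eps$--$\delta$ unpacking of that operator swap. Likewise your part~(ii) passes the uniform calmness bound through the infimum over $\dualvar$, which is what the paper does by interchanging $\inf_{\|\eps-\eps_0\|\le\delta}$ with $\inf_{\dualvar}$ and then using $\|\eps-\eps_0\|\le\delta$. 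Your version is more elementary in presentation and explicitly covers the case $\Phi(\eps_0)=-\infty$, which the paper handles only implicitly; your closing remark about why the constant $L$ must be $\dualvar$-independent correctly identifies the step that would break down otherwise.
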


\begin{proof}
	As for assertion~(i), we have
	\begin{align*}
		\limsup_{\eps \ra \eps_0} \Phi(\eps) & = \Inf{\delta >0} \Sup{\|\eps-\eps_0\|\le \delta} \Phi(\eps) =  \Inf{\delta >0} \Sup{\|\eps-\eps_0\|\le \delta} \Inf{\dualvar \in\Gamma} \varphi(\eps,\gamma) \\ 
		&\le \Inf{\dualvar \in\Gamma} \Inf{\delta >0} \Sup{\|\eps-\eps_0\|\le \delta} \varphi(\eps,\dualvar) = \Inf{\dualvar \in \Gamma}\limsup_{\eps \ra \eps_0} \varphi (\eps,\dualvar) = \Inf{\dualvar \in\Gamma} \varphi (\eps_0,\gamma) = \Phi(\eps_0),
	\end{align*}
	where the inequality follows from interchanging the infimum and supremum operators, while the penultimate equality in the last line relies on the continuity assumption. As for assertion~(ii), note that 
	\begin{align*}
		\liminf_{\eps \ra \eps_0} \Phi (\eps) & = \Sup{\delta >0} \Inf{\|\eps-\eps_0\|\le \delta} \Phi (\eps) =  \Sup{\delta >0} \Inf{\|\eps-\eps_0\|\le \delta} \Inf{\dualvar \in\Gamma} \varphi (\eps,\dualvar) = \Sup{\delta >0}\Inf{\dualvar \in\Gamma} \inf_{\|\eps-\eps_0\|\le \delta}  \varphi (\eps,\dualvar)\\
		& \ge \Sup{\delta >0} \Inf{\dualvar \in\Gamma} \inf_{\|\eps-\eps_0\|\le \delta} \big( \varphi(\eps_0,\gamma) - L\|\eps_0 - \eps\|\big) = \Sup{\delta >0} \Inf{\dualvar \in\Gamma} \big(\varphi(\eps_0,\gamma) - L\delta\big) \\
		& = \Inf{\dualvar \in\Gamma} \varphi (\eps_0,\gamma) = \Phi (\eps_0),
	\end{align*}
	where the inequality in the second line holds due to the calmness assumption.
\end{proof}

\begin{proposition}[Worst-case expectation function]
	\label{prop:g-refor}
	For any fixed $\rho > 0$, $\covsa\succeq 0$ and $X\succ 0$, the worst-case expectation $\g(\covsa,X)$ defined in~\eqref{eq:g:def} coincides with the optimal value of the tractable semidefinite program
	\be
	\label{eq:g:refor}  
	\begin{array}{cl}
		\Inf{\dualvar} & \dualvar \left(\rho^2 - \Tr{\covsa} \right) + \dualvar^2 \inner{(\dualvar I - X)^{-1}}{\covsa}\\
		\st &  \dualvar I \succ X .
	\end{array}
	\ee
	Moreover, the optimal value function $\g (\covsa, X)$ is continuous in $(\covsa, X)\in\PSD^p\times \PD^p$.
	
\end{proposition}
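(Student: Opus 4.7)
I would proceed in three stages: (i)~reformulate $\g(\covsa,X)$ as a finite-dimensional convex program in the covariance variable; (ii)~dualize the Wasserstein constraint and evaluate the inner supremum in closed form to obtain~\eqref{eq:g:refor}; and (iii)~deduce joint continuity through Lemma~\ref{lem:cont}.

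Every $\Q\in\Ambi^0$ is uniquely represented by its covariance $\cov\in\PSD^p$ via $\Q=\mc N(0,\cov)$, and a direct computation gives $\EE^{\Q}[\inner{\xi\xi^\top}{X}]=\Tr{\cov X}$. Specializing Proposition~\ref{prop:Wass} to the zero-mean case and invoking Definition~\ref{def:Wass-for-S} yields $\Wass(\Q,\empiP)=\V(\cov,\covsa)$, so
\[
\g(\covsa,X)=\Sup{\cov\succeq 0}\Big\{\Tr{\cov X}:\Tr{\cov}+\Tr{\covsa}-2\Tr{\sqrt{\sqrt{\covsa}\cov\sqrt{\covsa}}}\le\rho^2\Big\}.
\]
The feasible set is convex and compact (the Bures term $\cov\mapsto\Tr{\sqrt{\sqrt{\covsa}\cov\sqrt{\covsa}}}$ is concave on $\PSD^p$, while the trace term is linear) and the objective is linear. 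Dualizing the Wasserstein constraint with multiplier $\dualvar\ge 0$, weak duality gives
\[
\g(\covsa,X)\le\dualvar(\rho^2-\Tr{\covsa})+\Sup{\cov\succeq 0}\Big\{-\Tr{\cov(\dualvar I-X)}+2\dualvar\Tr{\sqrt{\sqrt{\covsa}\cov\sqrt{\covsa}}}\Big\}.
\]
Rank-one probes $\cov=tvv^\top$ along directions $v$ with $v^\top(\dualvar I-X)v\le 0$ and $t\to\infty$ show the inner supremum is $+\infty$ unless $\dualvar I\succ X$, so the effective dual domain is the open cone appearing in~\eqref{eq:g:refor}.

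For $\dualvar I\succ X$ I would compute the inner supremum via the standard SDP characterization
\[
\Tr{\sqrt{\sqrt{\covsa}\cov\sqrt{\covsa}}}=\Sup{K}\Big\{\Tr{K}:\begin{pmatrix}\cov&K\\K^\top&\covsa\end{pmatrix}\succeq 0\Big\},
\]
eliminating $\cov$ by Schur complement (assuming first $\covsa\succ 0$, the optimum satisfies $\cov=K\covsa^{-1}K^\top$) and reducing to an unconstrained concave quadratic in $K$ whose first-order condition yields $K\opt=\dualvar(\dualvar I-X)^{-1}\covsa$ and optimal value $\dualvar^{2}\inner{(\dualvar I-X)^{-1}}{\covsa}$. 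For the reverse inequality I would construct a primal optimizer: coercivity of the dual integrand in $\dualvar$ (linear growth as $\dualvar\to\infty$ since $\rho>0$, and blow-up as $\dualvar\downarrow\lambda_{\max}(X)$) guarantees a dual minimizer $\dualvar\opt$; setting $\cov\opt=(\dualvar\opt)^{2}(\dualvar\opt I-X)^{-1}\covsa(\dualvar\opt I-X)^{-1}$ and invoking the first-order optimality condition in $\dualvar$, a direct calculation verifies $\cov\opt\succeq 0$, $\V(\cov\opt,\covsa)=\rho$, and $\Tr{\cov\opt X}$ equal to the dual value, which closes the gap.

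Continuity of $\g$ on $\PSD^p\times\PD^p$ then follows by applying Lemma~\ref{lem:cont} to the infimum~\eqref{eq:g:refor}. For each admissible $\dualvar$ the integrand is jointly continuous in $(\covsa,X)$ on the open set where $\dualvar I\succ X$, yielding upper semicontinuity via Lemma~\ref{lem:cont}(i) after restricting to a neighborhood of any reference point $(\covsa_0,X_0)$. For lower semicontinuity, the coercivity argument above shows that the optimal $\dualvar\opt$ remains in a compact interval $[\underline\dualvar,\overline\dualvar]\subset(\lambda_{\max}(X_0),\infty)$ throughout a neighborhood of $(\covsa_0,X_0)$; on this interval the integrand is Lipschitz in $(\covsa,X)$ uniformly in $\dualvar$, which supplies the calmness hypothesis of Lemma~\ref{lem:cont}(ii). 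The main obstacle I anticipate is the singular regime $\covsa\not\succ 0$: the Schur-complement step then requires the Moore--Penrose pseudoinverse together with the range condition $\mathrm{range}(K)\subseteq\mathrm{range}(\covsa)$, or---more cleanly---can be circumvented by approximating $\covsa$ with $\covsa+\eps I$ and passing to the limit via the continuity result itself, which mildly entangles the duality and continuity parts of the argument.
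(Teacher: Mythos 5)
Your proposal is correct, but it departs from the paper's proof in two substantive ways that are worth recording.

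\emph{Evaluation of the inner supremum.} The paper dualizes the Wasserstein ball constraint and then performs the nonlinear substitution $B\leftarrow\sqrt{M S M}$ with $M=\covsa^{1/2}$, which turns the inner problem into an unconstrained quadratic in $B$ whose first-order condition is a continuous Lyapunov equation $B\Delta+\Delta B+2\gamma I=0$, solved uniquely by $B^\star=-\gamma\Delta^{-1}$. You instead replace the Bures term $\Tr{\sqrt{\covsa^{1/2}S\covsa^{1/2}}}$ by its SDP characterization as a supremum over an off-diagonal block $K$, eliminate $S$ via the Schur complement $S=K\covsa^{-1}K^\top$, and arrive at an unconstrained concave quadratic in $K$ with optimizer $K^\star=\gamma(\gamma I-X)^{-1}\covsa$. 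Both routes require $\covsa\succ 0$ and yield the same closed-form value $\gamma^2\inner{(\gamma I-X)^{-1}}{\covsa}$; your version is arguably more standard (it relies on a well-known SDP gadget rather than a somewhat ad hoc change of variables), while the paper's Lyapunov-equation argument is shorter once the substitution is made. Your rank-one probe argument for unboundedness when $\gamma I\not\succ X$ is the same in spirit as the paper's $B_k=k\,\overline v\,\overline v^\top$ argument, though you should note, as the paper does, that when $\gamma=\lambda_{\max}(X)$ one must additionally observe $\gamma>0$ (since $X\succ0$) for the Bures term to push the objective to infinity.

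\emph{Continuity.} This is the more consequential divergence. The paper proves continuity of $\g$ on all of $\PSD^p\times\PD^p$ by applying Berge's maximum theorem directly to the \emph{primal} supremum form $\g(\covsa,X)=\sup_{S\succeq0}\{\inner{S}{X}:\V(S,\covsa)\le\rho\}$, exploiting compactness of the feasible ball; this requires neither invertibility of $\covsa$ nor any prior knowledge of the dual reformulation, so the identity $\g=\g'$ for singular $\covsa$ then follows cleanly by passing $\covsa+\eps I\to\covsa$ in the already-established continuous function. You instead propose to obtain continuity from Lemma~\ref{lem:cont} applied to the \emph{dual} infimum~\eqref{eq:g:refor}. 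This can be made to work, but it carries two complications that the paper's route avoids: (a) the domain $\Gamma(X)=\{\gamma:\gamma I\succ X\}$ moves with $X$, so Lemma~\ref{lem:cont} does not apply verbatim and one must carefully justify restriction to a fixed compact subinterval valid on a neighborhood of $(\covsa_0,X_0)$ (which requires exactly the coercivity/level-boundedness argument you sketch); and (b) since the dual form is only known to equal $\g$ a priori when $\covsa\succ 0$, the continuity and duality arguments become circular-looking and must be untangled---you flag this yourself as the ``mild entanglement.'' The paper sidesteps both issues by getting continuity from the primal side first. Your plan is salvageable, but if you want a clean writeup you would do well to adopt the paper's ordering: prove continuity from the primal supremum (via Berge), then prove the dual identity for $\covsa\succ0$, then extend by continuity.

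Your strong-duality argument (constructing the primal optimizer $\cov^\star=(\gamma^\star)^2(\gamma^\star I-X)^{-1}\covsa(\gamma^\star I-X)^{-1}$ and checking primal feasibility and objective match) is valid and self-contained; the paper instead invokes Slater's condition with $\covsa$ itself as the Slater point when $\rho>0$. Either closes the gap.
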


\begin{proof}
	Using the definitions of the worst-case expectation $\g (\covsa, X)$ and the ambiguity set $\Ambi^0$, we find
	\begin{align*}
	\g (\covsa, X) = \Sup{ \Q \in \Ambi^0} \inner{\EE^\Q[ \xi \xi^\top]}{X} = \Sup{S\in\PSD^p} \left\{ \inner{S}{X} : \V(S, \covsa) \le \rho \right\},
	\end{align*}
	where the second equality holds because the metric $\V$ on $\PSD^p$ is induced by the type-2 Wasserstein metric $\Wass$ on $\mc N^p_0$, meaning that there is a one-to-one correspondence between distributions $\mbb Q\in \mc N^p_0$ with $\Wass(\mbb Q,\wh{\mbb P})\le \rho$ and covariance matrices $S\in\PSD^p$ with $\V(S, \covsa) \le \rho$. The continuity of~$\g (\covsa, X)$ thus follows from Berge's maximum theorem \cite[pp.~115--116]{ref:Berge-63}, which applies because $\inner{S}{X}$ and $\V(S, \covsa)$ are continuous in ${(S,\covsa, X)\in\PSD^p \times \PSD^p\times \PD^p}$, while $\{ S\in\PSD^p : \V(S, \covsa) \le \rho\}$ is nonempty and compact for every $\covsa \in \PSD^p$ and $\rho> 0$.
	
	By the definition of the induced metric~$\V$ we then obtain
	\begin{align}
	\label{eq:g:induced-metric}
	\g (\covsa, X) =  \Sup{S \in \PSD^p } \left\{ \inner{S}{X} :  \Tr{\covsa} + \Tr{S} - 2\Tr {\sqrt{\covsa^{\frac{1}{2}} S \covsa^{\frac{1}{2}}}} \leq \rho^2 \right\}.
	\end{align}
	To establish the equivalence between~\eqref{eq:g:refor} and~\eqref{eq:g:induced-metric}, we first assume that $\covsa \succ 0$. The generalization to rank deficient sample covariance matrices will be addressed later. By dualizing the explicit constraint in~\eqref{eq:g:induced-metric} and introducing the constant matrix $M = \covsa^{\frac{1}{2}}$, which inherits invertibility from $\covsa$, we find
	\begin{align}
		\g(\covsa, X) =&\Sup{S \in \PSD^p} \Inf{\dualvar \geq 0} \inner{S}{X - \dualvar I} + 2 \dualvar \inner{\sqrt{M S M}}{I} + \dualvar \left(\rho^2 - \Tr{\covsa} \right) \notag \\
		= & \Inf{\dualvar \geq 0} \Sup{S \in \PSD^p} \inner{S}{X - \dualvar I} + 2 \dualvar \inner{\sqrt{M S M}}{I} + \dualvar \left( \rho^2 - \Tr{\covsa} \right) \notag \\
		=& \Inf{\dualvar \geq 0} \left\{ \dualvar \left(\rho^2 - \Tr{\covsa} \right) +  \Sup{B \in \PSD^p} \left\{ \inner{ B^2}{ M^{-1} (X - \dualvar I) M^{-1}} + 2 \dualvar \inner{B}{I} \right\} \right\} \label{eq:g:b}.
	\end{align}
	Here, the first equality exploits the identity $\Tr{A}=\langle A , I\rangle$ for any $A \in\R^{p\times p}$, the second equality follows from strong duality, which holds because $\covsa$ constitutes a Slater point for problem~\eqref{eq:g:induced-metric} when $\rho>0$, and the third equality relies on the substitution $B \leftarrow \sqrt{M S M}$, which implies that $S = M^{-1} B^2 M^{-1}$. Introducing the shorthand $\Delta = M^{-1}(X-\dualvar I) M^{-1}$ allows us to simplify the inner maximization problem over $B$ in~\eqref{eq:g:b} to
	\begin{align}
		\label{eq:subproblem-B}
		\Sup{B \in \PSD^p} \left\{ \inner{B^2}{\Delta} + 2 \dualvar \inner{B}{I} \right\} .
	\end{align}
	
	If $\Delta \not \prec 0$, then~\eqref{eq:subproblem-B} is unbounded. To see this, denote by $\overline \lambda(\Delta)$ the largest eigenvalue of $\Delta$ and by $\overline v$ a corresponding eigenvector. If $\overline\lambda(\Delta)>0$, then the objective value of $B_k= k\cdot \overline v \, \overline v^\top\succeq 0$ in~\eqref{eq:subproblem-B} grows quadratically with $k$. If $\overline\lambda(\Delta)=0$, then $\gamma>0$ for otherwise $X\preceq 0$ contrary to our assumption, and thus the objective value of $B_k$ in~\eqref{eq:subproblem-B} grows linearly with $k$. In both cases~\eqref{eq:subproblem-B} is indeed unbounded.
	
	If $\Delta \prec 0$, then~\eqref{eq:subproblem-B} becomes a convex optimization problem that can be solved analytically. Indeed, the objective function of~\eqref{eq:subproblem-B} is minimized by $B\opt = -\dualvar \Delta^{-1}$, which satisfies the first-order optimality condition 
	\be
	\label{eq:OptimalB}
	B\Delta + \Delta B + 2 \dualvar I = 0
	\ee
	and is strictly feasible in~\eqref{eq:subproblem-B} because $\Delta\prec 0$. Moreover, as \eqref{eq:OptimalB} is naturally interpreted as a continuous Lyapunov equation, its solution $B\opt$ can be shown to be unique; see, {\em e.g.},~\cite[Theorem~12.5]{ref:Hespanha-09}. We may thus conclude that $B\opt$ is the unique maximizer of~\eqref{eq:subproblem-B} and that the maximum of~\eqref{eq:subproblem-B} amounts to~$-\gamma^2\Tr{\Delta}$.
	
	Adding the constraint $\gamma I\succ X$ to the outer minimization problem in~\eqref{eq:g:b}, thus excluding all values of $\gamma$ for which $\Delta\not \prec 0$ and the inner supremum is infinite, and replacing the optimal value of the inner maximization problem with $-\gamma^2\Tr{\Delta}=\dualvar^2 \inner{(\dualvar I - X)^{-1}}{\covsa}$ yields~\eqref{eq:g:refor}. This establishes the claim for $\covsa\succ 0$.

	In the second part of the proof, we show that the claim remains valid for rank deficient sample covariance matrices. To this end, we denote the optimal value of problem~\eqref{eq:g:refor} by $\g' (\covsa,X)$. From the first part of the proof we know that $\g' (\covsa,X)=\g (\covsa,X)$ for all $\covsa,X\in \PD^d$. We also know that $\g (\covsa,X)$ is continuous in $(\covsa, X)\in\PSD^p\times \PD^p$. It remains to be shown that $\g' (\covsa,X)=\g (\covsa,X)$ for all $\covsa\in\PSD^d$ and $X\in \PD^d$.
	
	Fix any $\covsa\in\PSD^p$ and $X\in\PD^d$, and note that $\covsa+\eps I\succ 0$ for every $\eps>0$. Defining the intervals $\mc E=\R_+$ and $\Gamma=\{\gamma\in\R: \gamma I\succ X\}$ as well as the auxiliary functions 
	\[
		\Phi (\eps)=\g'(\covsa+\eps I,X) \quad \text{and}\quad \varphi (\eps, \gamma) = \dualvar \left(\rho^2 - \Tr{\covsa+\eps I} \right) + \dualvar^2 \inner{(\dualvar I - X)^{-1}}{\covsa+\eps I} ,
	\]
	it follows from~\eqref{eq:g:refor} that 
	\[
		\Phi(\eps) = \inf_{\dualvar \in\Gamma} \varphi(\eps,\dualvar) \quad \forall \eps\in\mc E.
	\]
	One can show via Lemma~\ref{lem:cont} that $\Phi(\eps)$ is continuous at $\eps=0$. Indeed, $\varphi(\eps,\dualvar)$ is linear and thus continuous in $\eps$ for every $\dualvar\in\Gamma$, which implies via Lemma~\ref{lem:cont}(a) that $\Phi(\eps)$ is upper-semicontinuous at $\eps=0$. Moreover, $\varphi (\eps,\dualvar)$ is calm from below at $\eps=0$ with $L=0$ uniformly in $\dualvar \in \Gamma$ because
	\begin{align*}
		\varphi (\eps,\dualvar) - \varphi (0,\dualvar) = \dualvar \Tr{(I - \dualvar^{-1}X)^{-1} - I}\eps \ge 0 \quad \forall \gamma\in\Gamma.
	\end{align*}
	Here, the inequality holds for all $\gamma\in\Gamma$ due to the conditions $I\succ \dualvar^{-1} X\succ 0$, which are equivalent to $0\prec I-\dualvar^{-1} X\prec I$ and imply $(I-\dualvar^{-1} X)^{-1}\succ I$. Lemma~\ref{lem:cont}(b) thus ensures that $\Phi(\eps)$ is lower-semicontinuous at $\eps=0$. In summary, we conclude that~$\Phi(\eps)$ is indeed continuous at $\eps = 0$.	

	Combining the above results, we find 	
	\begin{align*}
	\g (\covsa,X) = \lim_{\eps\ra 0^+} \g (\covsa+\eps I,X)= \lim_{\eps\ra 0^+} \g' (\covsa+\eps I,X)= \lim_{\eps\ra 0^+} \Phi(\eps) = \Phi(0) = \g' (\covsa,X),
	\end{align*}
	where the five equalities hold due to the continuity of $\g(\covsa, X)$ in $\covsa$, the fact that $\g(\covsa, X)=\g'(\covsa, X)$ for all $\covsa\succ 0$, the definition of $\Phi(\eps)$, the continuity of $\Phi(\eps)$ at $\eps=0$ and once again from the definition of $\Phi(\eps)$, respectively. The claim now follows because $\covsa\in\PSD^p$ and $X\in\PD^d$ were chosen arbitrarily.
\end{proof}

We have now collected all necessary ingredients for the proof of Theorem~\ref{thm:refor}.

\begin{proof}[Proof of Theorem~\ref{thm:refor}]
	By Proposition~\ref{prop:g-refor}, the worst-case expectation in~\eqref{eq:DROSimplified} coincides with the optimal value of the semidefinite program~\eqref{eq:g:refor}. Substituting this semidefinite program into~\eqref{eq:DROSimplified} yields~\eqref{eq:Reformulation2}. Note that the condition $X\succ 0$, which ensures that $\log\det X$ is well-defined, is actually redundant because it is implied by the constraint $X\in\mc X$. Nevertheless, we make it explicit in~\eqref{eq:Reformulation2} for the sake of clarity.
		
	It remains to show that $\J(\covsa)$ is continuous. To this end, we first construct bounds on the minimizers of~\eqref{eq:Reformulation2} that vary continuously with $\covsa$. Such bounds can be constructed from any feasible decision $(X_0, \dualvar_0)$. Assume without loss of generality that $\gamma_0 > p/\rho^2$, and denote by $f_0(\covsa)$ the objective value of  $(X_0, \dualvar_0)$ in~\eqref{eq:Reformulation2}, which constitutes a linear function of~$\covsa$. Moreover, define two continuous auxiliary functions
	\begin{align}
	\label{eq:bound:def}
	\overline x(\covsa) \Let  { f_0(\covsa) - p(1-\log \dualvar_0) \over \rho^2 - p\dualvar_0^{-1}} \qquad \text{and} \qquad 
	\underline x(\covsa) \Let \frac{e^{-f_0(\covsa)}}{\overline x(\covsa)^{p-1}} ,
	\end{align}	
which are strictly positive because $\gamma_0 > p/\rho^2$. Clearly, the infimum of problem~\eqref{eq:Reformulation2} is determined only by feasible decisions $(X, \dualvar)$ with an objective value of at most $f_0(\covsa)$. All such decisions satisfy
	\begin{align}
	\label{eq:estimator-ub}
	f_0 (\covsa) & \ge -\log\det X + \dualvar \rho^2 + \dualvar \inner{  (I - \dualvar^{-1} X)^{-1} - I}{\covsa}
	\geq -\log\det X + \dualvar \rho^2 \\
	& \ge -p\log \dualvar + \dualvar \rho^2 \ge (\rho^2 - \dualvar_0^{-1} p) \dualvar + p(1 - \log \dualvar_0), \notag
	\end{align}
	where the second and third inequalites exploit the estimates $(I-\dualvar^{-1} X)^{-1}\succ I$ and $\det X \leq \det (\dualvar I)=\dualvar^p$, respectively, which are both implied by the constraint $\dualvar I \succ X \succ 0$, and the last inequality holds because $\log \dualvar\le \log\dualvar_0+\dualvar_0^{-1}(\dualvar-\dualvar_0)$ for all~$\dualvar>0$. By rearranging the above inequality and recalling the definition of $\overline x(\covsa)$, we thus find $\gamma\le \overline x(\covsa)$, which in turn implies that $X\prec \lambda I\preceq \overline x(\covsa) I$.

	Denoting by $\{x_i\}_{i\le p}$ the eigenvalues of the matrix $X$ and setting $x_{\min}=\min_{i\le p}x_i$, we further find
	\begin{align*}
	f_0(\covsa) \geq - \log \det X = -\log\bigg( \prod_{i=1}^{p} x_{i} \bigg) \geq - \log \big(x_{\min}\, \overline x(\covsa)^{p-1}\big) =  - \log x_{\min} - (p-1) \log \overline x(\covsa),
	\end{align*}
	where the first inequality follows from~\eqref{eq:estimator-ub}, while the second inequality is based on overestimating all but the smallest eigenvalue of $X$ by $\overline x(\covsa)$. By rearranging the above inequality and recalling the definition of $\underline x(\covsa)$, we thus find $x_{\min}\ge \underline x(\covsa)$, which in turn implies that $X\succeq \underline x(\covsa) I$.
	
	The above reasoning shows that the extra constraint $\underline x(\covsa) I\preceq X\preceq \overline x(\covsa) I$ has no impact on~\eqref{eq:Reformulation2}, that~is,
	\begin{align*}
	\J (\covsa) & = \left\{ \begin{array}{cl} \Inf{X} & -\log \det X  + \displaystyle \inf_{\dualvar} \left\{ \dualvar \left( \rho^2 - \Tr{\covsa} \right) + \dualvar^2 \inner{(\dualvar I - X)^{-1}}{\covsa} :  \dualvar I \succ X \right\} \\
	\st & X \in \X,\quad \underline x(\covsa) I\preceq X\preceq \overline x(\covsa) I.
	\end{array}
	\right. \\
	& = \left\{ \begin{array}{cl} \Inf{X} & -\log \det X  + \g(\covsa,X)\\
	\st & X \in \X,\quad \underline x(\covsa) I\preceq X\preceq \overline x(\covsa) I,
	\end{array}
	\right.
	\end{align*}
	where the second equality follows from Proposition~\ref{prop:g-refor}. The continuity of $\J(\covsa)$ now follows directly from Berge's maximum theorem~\cite[pp.~115--116]{ref:Berge-63}, which applies due to the continuity of~$\g(\covsa,X)$ established in Proposition~\ref{prop:g-refor}, the compactness of the feasible set and the continuity of $\underline x(\covsa)$ and $\overline x(\covsa)$.
\end{proof}

An immediate consequence of Theorem~\ref{thm:refor} is that the simplified estimation problem~\eqref{eq:DROSimplified} is equivalent to an explicit semidefinite program and is therefore in principle computationally tractable.

\begin{corollary}[Tractability]
	\label{thm:refor-sdp}
	For any fixed $\rho> 0$ and $\covsa\succeq 0$, the simplified distributionally robust estimation problem~\eqref{eq:DROSimplified} is equivalent to the tractable semidefinite program
	\begin{equation}
		\label{eq:Reformulation1} 
		\J (\covsa) = 
		\left\{
		\begin{array}{cl}
			\Inf{X, Y, \dualvar} &-\log \det X  + \dualvar \left(\rho^2 - \Tr{\covsa} \right)  + \Tr{Y}  \\
			\st & 
			\begin{bmatrix} Y & \dualvar \covsa^{\frac{1}{2}} \\ \dualvar \covsa^{\frac{1}{2}} & \dualvar I - X \end{bmatrix} \succeq 0 \vspace{1mm}\\
			& \dualvar I \succ X \succ 0, \quad Y \succeq 0, \quad X \in \X.
		\end{array}
		\right.
	\end{equation}
\end{corollary}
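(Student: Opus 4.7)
The plan is to derive the semidefinite reformulation~\eqref{eq:Reformulation1} directly from the convex reformulation~\eqref{eq:Reformulation2} established in Theorem~\ref{thm:refor} by introducing a matrix slack variable $Y$ that represents the nonlinear term $\gamma^{2}\inner{(\gamma I-X)^{-1}}{\covsa}$ in epigraph form and then invoking a standard Schur complement argument. Since Theorem~\ref{thm:refor} already establishes that $\J(\covsa)$ equals the optimal value of~\eqref{eq:Reformulation2}, it suffices to show that the latter problem coincides with~\eqref{eq:Reformulation1}.

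First I would rewrite the nonlinear term using the cyclic property of the trace together with the positive semidefinite square root $\covsa^{1/2}$, which is well defined for any $\covsa\in\PSD^{p}$, as
\begin{equation*}
\gamma^{2}\inner{(\gamma I-X)^{-1}}{\covsa} \;=\; \gamma^{2}\Tr{\covsa^{1/2}(\gamma I-X)^{-1}\covsa^{1/2}}.
\end{equation*}
Since every feasible $(X,\gamma)$ in~\eqref{eq:Reformulation2} satisfies $\gamma I-X\succ 0$, the matrix $M(X,\gamma)\Let\gamma^{2}\covsa^{1/2}(\gamma I-X)^{-1}\covsa^{1/2}$ is well defined and positive semidefinite. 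Introducing a slack matrix $Y\in\PSD^{p}$ constrained to satisfy $Y\succeq M(X,\gamma)$, the trace inequality $\Tr{Y}\ge\Tr{M(X,\gamma)}$ holds with equality when $Y=M(X,\gamma)$; hence the nonlinear term equals the infimum of $\Tr{Y}$ over all feasible $Y$.

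Next I would apply the Schur complement lemma. Because $\gamma I-X\succ 0$, the matrix inequality
\begin{equation*}
\begin{bmatrix} Y & \gamma\,\covsa^{1/2} \\ \gamma\,\covsa^{1/2} & \gamma I-X \end{bmatrix}\succeq 0
\end{equation*}
is equivalent to $Y-\gamma^{2}\covsa^{1/2}(\gamma I-X)^{-1}\covsa^{1/2}\succeq 0$, that is, to $Y\succeq M(X,\gamma)$. This LMI representation automatically implies $Y\succeq 0$, so the explicit constraint $Y\succeq 0$ in~\eqref{eq:Reformulation1} is redundant and is kept only for clarity. Substituting the epigraph representation into~\eqref{eq:Reformulation2} and replacing the scalar nonlinearity by $\Tr{Y}$ together with the LMI constraint yields exactly~\eqref{eq:Reformulation1}.

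Finally I would verify that the resulting problem is indeed a semidefinite program: the objective $-\log\det X+\gamma(\rho^{2}-\Tr{\covsa})+\Tr{Y}$ is separable into the convex term $-\log\det X$ and linear terms in $\gamma$ and $Y$; all remaining constraints are linear matrix inequalities, and $\X$ is assumed to be representable through linear (in)equalities on $X$. There is no real obstacle in this proof beyond correctly managing the square root $\covsa^{1/2}$ and verifying that Schur complement applies under the strict feasibility condition $\gamma I-X\succ 0$; both points are immediate given the machinery already developed in Proposition~\ref{prop:g-refor} and Theorem~\ref{thm:refor}.
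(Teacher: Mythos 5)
Your proof is correct and follows essentially the same approach as the paper's: introduce the epigraph variable $Y$ for the matrix-fractional term $\gamma^{2}\inner{(\gamma I-X)^{-1}}{\covsa}$, bound it via the trace inequality $A\succeq B\Rightarrow\Tr{A}\ge\Tr{B}$, and linearize the resulting constraint $Y\succeq\gamma^{2}\covsa^{1/2}(\gamma I-X)^{-1}\covsa^{1/2}$ using the Schur complement, then substitute back into~\eqref{eq:Reformulation2}. The only cosmetic difference is that the paper cites the generic matrix-fractional example from Boyd and Vandenberghe, whereas you spell out the cyclic-trace step and the redundancy of the explicit $Y\succeq 0$ constraint.
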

\begin{proof}
We know from Theorem~\ref{thm:refor} that the estimation problem~\eqref{eq:DROSimplified} is equivalent to the convex program~\eqref{eq:Reformulation2}. As $X$ represents a decision variable instead of a parameter, however, problem~\eqref{eq:Reformulation2} fails to be a semidefinite program per se. Indeed, its objective function involves the nonlinear term $h(X, \dualvar) \Let \dualvar^2 \inner{(\dualvar I - X)^{-1}}{\covsa}$, which is interpreted as $\infty$ outside of its domain $\big\{(X, \dualvar) \in \PSD \times \mathbb{R} : \dualvar I \succ X \big\}$. However, $h(X,\dualvar)$ constitutes a matrix fractional function as described in \cite[Example 3.4]{ref:Boyd-04} and thus admits the semidefinite reformulation			
	\begin{align*}
			h(X,\dualvar) 
			&= \inf_{t}\left\{t ~:~ \dualvar I \succ X, \quad \dualvar^2 \inner{(\dualvar I - X)^{-1}}{\covsa} \leq t \right\} \\
			&= \inf_{Y, t} \left\{t ~:~ \dualvar I \succ X, \quad Y\succeq \dualvar^2 \covsa^{\half} (\dualvar I - X )^{-1} \covsa^{\half}, \quad \Tr{Y} \leq t \right\} \\
			&= \inf_{Y}  \left\{ \Tr{Y} ~:~ \dualvar I \succ X, \quad \begin{bmatrix} Y & \dualvar \covsa^{\half} \\ \dualvar \covsa^{\half} & \dualvar I - X \end{bmatrix} \succeq 0 
			\right\},
		\end{align*}
	where the second equality holds because $A \succeq B$ implies $\Tr{A} \geq \Tr{B}$, while the third equality follows from a standard Schur complement argument; see, {\em e.g.}, \cite[Appendix~A.5.5]{ref:Boyd-04}. Thus, $h(X,\dualvar)$ is representable as the optimal value of a parametric semidefinite program whose objective and constraint functions are jointly convex in the auxiliary decision variable $Y$ and the parameters $X$ and $\dualvar$. The postulated reformulation~\eqref{eq:Reformulation1} is then obtained by substituting the last expression into~\eqref{eq:Reformulation2}.
\end{proof}

Now that we have derived a tractable semidefinite reformulation for the simplified estimation problem~\eqref{eq:DROSimplified}, we are ready to address the generic estimation problem~\eqref{eq:dro}, which does {\em not} assume knowledge of the mean and is robustified against all distributions in the ambiguity set $\Ambi$ {\em without} mean constraints.

\begin{theorem}[Sufficiency of $\covsa$]
	\label{thm:refor2}
	For any fixed $\rho> 0$, $\wh\mu\in\R^p$ and $\covsa\in \PSD^p$, the general distributionally robust estimation problem~\eqref{eq:dro} is equivalent to the optimization problem~\eqref{eq:Reformulation2} and the tractable semidefinite program~\eqref{eq:Reformulation1}. Moreover, the optimal value function~$\J(\wh\mu, \covsa)$ is constant in $\wh \mu$ and continuous in $\covsa$.
\end{theorem}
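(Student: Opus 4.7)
The plan is to reduce the general problem~\eqref{eq:dro} to the simplified problem~\eqref{eq:DROSimplified} by establishing the identity
\[
\inf_{\mu \in \R^p} \sup_{\Q \in \Ambi} \EE^\Q[(\xi-\mu)^\top X(\xi-\mu)] \;=\; \g(\covsa, X) \qquad \forall X \in \X.
\]
Once this is in place, problem~\eqref{eq:dro} collapses to~\eqref{eq:DROSimplified}, so the equivalence with~\eqref{eq:Reformulation2} and~\eqref{eq:Reformulation1}, the independence of the optimal value from $\wh\mu$, and the continuity of $\covsa \mapsto \J(\covsa)$ are all inherited directly from Theorem~\ref{thm:refor} and Corollary~\ref{thm:refor-sdp}. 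Throughout, I parametrize $\Q = \mc N(\mu_\Q, \Sigma_\Q)$, use $\EE^\Q[(\xi-\mu)^\top X(\xi-\mu)] = \Tr{\Sigma_\Q X} + (\mu_\Q-\mu)^\top X(\mu_\Q-\mu)$, and apply Proposition~\ref{prop:Wass} to rewrite the ambiguity constraint as $\|\wh\mu - \mu_\Q\|^2 + \V(\covsa, \Sigma_\Q)^2 \leq \rho^2$.

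For the lower bound $\inf_\mu \sup_\Q[\cdot] \geq \g(\covsa, X)$ I invoke the weak duality inequality $\inf_\mu \sup_\Q \geq \sup_\Q \inf_\mu$. Since $X \succ 0$, the inner infimum over $\mu$ is attained at $\mu = \mu_\Q$ and equals $\Tr{\Sigma_\Q X}$; the outer supremum over $\Q \in \Ambi$ is then realized by taking $\mu_\Q = \wh\mu$, which frees the entire budget for $\Sigma_\Q$, so the residual supremum becomes $\sup\{\Tr{\Sigma_\Q X} : \Sigma_\Q \succeq 0,\, \V(\covsa, \Sigma_\Q) \leq \rho\} = \g(\covsa, X)$.

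The matching upper bound is delivered by the witness $\mu = \wh\mu$. After the substitution $u = \mu_\Q - \wh\mu$, the worst-case expectation at $\mu = \wh\mu$ becomes $\sup\{\inner{\Sigma_\Q + uu^\top}{X} : \|u\|^2 + \V(\covsa, \Sigma_\Q)^2 \leq \rho^2,\; \Sigma_\Q \succeq 0\}$. Setting $S = \Sigma_\Q + uu^\top \in \PSD^p$ makes the objective equal $\inner{S}{X}$, so the bound reduces to showing that $\V(\covsa, S) \leq \rho$, or equivalently,
\[
\V(\covsa, \Sigma_\Q + uu^\top)^2 \leq \V(\covsa, \Sigma_\Q)^2 + \|u\|^2.
\]
I expect this geometric inequality---which asserts that absorbing a mean shift into the covariance matrix increases the Bures--Wasserstein distance by at most $\|u\|$---to be the main obstacle. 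Using Definition~\ref{def:Wass-for-S} and canceling the $\Tr{\Sigma_\Q} + \|u\|^2$ contributions, it reduces to
\[
\Tr{\sqrt{\covsa^{1/2}(\Sigma_\Q + uu^\top)\covsa^{1/2}}} \geq \Tr{\sqrt{\covsa^{1/2}\Sigma_\Q\covsa^{1/2}}};
\]
writing $A = \covsa^{1/2}\Sigma_\Q\covsa^{1/2}$ and $B = (\covsa^{1/2}u)(\covsa^{1/2}u)^\top$, this is exactly $\Tr{\sqrt{A+B}} \geq \Tr{\sqrt{A}}$. Since $A+B \succeq A \succeq 0$, operator monotonicity of $t \mapsto \sqrt{t}$ on $[0,\infty)$ gives $\sqrt{A+B} \succeq \sqrt{A}$, and taking the trace preserves this semidefinite inequality.

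Combining the two bounds establishes the target identity, from which $\J(\wh\mu, \covsa) = \J(\covsa)$ follows. The claimed equivalence with~\eqref{eq:Reformulation2} and~\eqref{eq:Reformulation1}, the constancy in $\wh\mu$, and the continuity in $\covsa$ then all follow directly from Theorem~\ref{thm:refor} and Corollary~\ref{thm:refor-sdp}.
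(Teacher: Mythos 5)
Your proof is correct, but it takes a genuinely different route from the paper's. The paper dualizes the covariance part of the Wasserstein constraint (via Proposition~\ref{prop:g-refor}) and then invokes the minimax theorem twice to swap the maximization over $\mu'$ with the minimizations over $\gamma$ and $\mu$, before observing that the innermost problems are solved by $\mu=\mu'$ and then $\mu'=\wh\mu$. You instead sandwich the quantity: the lower bound comes from weak duality $\inf\sup\ge\sup\inf$ plus the observation that setting $\mu_\Q=\wh\mu$ releases the full budget for $\Sigma_\Q$, while the matching upper bound uses the witness $\mu=\wh\mu$ together with the Bures--Wasserstein contraction inequality
\[
\V(\covsa, \Sigma_\Q + uu^\top)^2 \;\le\; \V(\covsa, \Sigma_\Q)^2 + \|u\|^2,
\]
which you reduce to $\Tr{\sqrt{A+B}}\ge\Tr{\sqrt{A}}$ for $A,B\succeq 0$ and dispose of via operator monotonicity of the matrix square root. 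Your approach avoids any appeal to a minimax theorem (so no compactness or saddle-point verification is needed) at the cost of importing operator monotonicity; it is arguably more elementary, and the key inequality carries a transparent geometric meaning---absorbing a mean shift $u$ into the covariance as a rank-one update costs at most $\|u\|$ in Wasserstein distance---which makes it clearer \emph{why} the mean parameter decouples and $\covsa$ is sufficient. Both proofs are clean; the paper's generalizes more readily to settings where the inner supremum over $S$ has no closed-form dual, whereas yours showcases a reusable structural fact about the Bures--Wasserstein metric.
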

\begin{proof}
By Proposition~\ref{prop:Wass}, the optimal value of the estimation problem~\eqref{eq:dro} can be expressed as
\begin{align*}
	\J(\wh \mu, \covsa)& =\inf_{\mu,\,X\in\mc X} -\log \det X +  \left\{ \begin{array}{cl} \displaystyle \sup_{\mu',\,S\succeq 0} & (\mu'-\mu)^\top X (\mu'-\mu) + \inner{S}{X} \\
	\st & \Tr{S} + \Tr{\covsa} - 2\Tr{ \sqrt{\covsa^{\frac{1}{2}} S \covsa^{\frac{1}{2}} }}\le \rho^2 -  \norm{\mu' - \wh \mu}^2
	\end{array}\right.\\
	& =\inf_{\mu,\,X\in\mc X} -\log \det X + \sup_{\mu':\|\mu'- \wh \mu\|\le\rho} (\mu'-\mu)^\top X (\mu'-\mu) \\
	& \hspace{4cm}+ \inf_{\dualvar:\dualvar I\succ X} \dualvar \left( \rho^2 -  \norm{\mu' - \wh \mu}^2 - \Tr{\covsa} \right) + \dualvar^2 \inner{(\dualvar I - X)^{-1}}{\covsa} .
\end{align*}
Here, the second equality holds because the Wasserstein constraint is infeasible unless $\|\mu'-\wh \mu\|\le \rho$ and because the maximization problem over $S$, which constitutes an instance of \eqref{eq:g:induced-metric} with $\rho^2-\|\mu'-\wh \mu\|^2$ instead of~$\rho^2$, can be reformulated as a minimization problem over $\dualvar$ thanks to Proposition~\ref{prop:g-refor}. By the minimax theorem \cite[Proposition~5.5.4]{ref:Bert-09}, which applies because $\mu'$ ranges over a compact ball and because $X-\dualvar I\prec 0$, we may then interchange the maximization over $\mu'$ with the minimization over $\dualvar$ to obtain
\begin{align*}
	\J(\wh \mu, \covsa)& = \inf_{\scriptsize \begin{array}{c }\mu,\,X\in\mc X,\\[-0.5ex] \gamma:\gamma I\succ X\end{array}} -\log \det X + \sup_{\mu':\|\mu'-\wh \mu\|\le\rho} (\mu'-\mu)^\top X (\mu'-\mu) \\[-2ex]
	& \hspace{4.8cm} + \dualvar \left( \rho^2 - \norm{\mu' - \wh \mu}^2 - \Tr{\covsa} \right) + \dualvar^2 \inner{(\dualvar I - X)^{-1}}{\covsa}.
\end{align*}
Using the minimax theorem \cite[Proposition~5.5.4]{ref:Bert-09} once again to interchange the minimization over $\mu$ with the maximization over $\mu'$ yields
\begin{align*}
	\J(\wh \mu, \covsa)& = \inf_{\scriptsize \begin{array}{c } X\in\mc X,\\[-0.5ex] \gamma:\gamma I\succ X\end{array}} -\log \det X + \sup_{\mu':\|\mu'-\wh \mu\|\le\rho} \inf_{\mu}\;(\mu'-\mu)^\top X (\mu'-\mu) \\[-2ex]
	& \hspace{4.8cm} + \dualvar \left( \rho^2 - \norm{\mu' - \wh \mu}^2 - \Tr{\covsa} \right) + \dualvar^2 \inner{(\dualvar I - X)^{-1}}{\covsa}\\
	& = \inf_{\scriptsize \begin{array}{c } X\in\mc X,\\[-0.5ex] \gamma:\gamma I\succ X\end{array}} -\log \det X + \dualvar \left( \rho^2 - \Tr{\covsa} \right) + \dualvar^2 \inner{(\dualvar I - X)^{-1}}{\covsa},
\end{align*}
where the second equality holds because $\mu'$ is the unique optimal solution of the innermost minimization problem over $\mu$, while $\wh \mu$ is the unique optimal solution of the maximization problem over $\mu'$. Thus, the general estimation problem \eqref{eq:dro} is equivalent to~\eqref{eq:Reformulation2}, and $\J(\wh \mu, \covsa)$ is manifestly constant in $\wh \mu$. Theorem~\ref{thm:refor} further implies that $\J(\wh \mu, \covsa)$ is continuous in $\covsa$, while Corollary~\ref{thm:refor-sdp} implies that \eqref{eq:dro} is equivalent to the tractable semidefinite program~\eqref{eq:Reformulation1}. These observations complete the proof.
\end{proof}

Theorem~\ref{thm:refor2} asserts that the general estimation problem \eqref{eq:dro} is equivalent to the simplified estimation problem~\eqref{eq:DROSimplified}, which is based on the hypothesis that the mean of $\xi$ is known to vanish. Theorem~\ref{thm:refor2} further reveals that the general estimation problem \eqref{eq:dro} as well as its (unique) optimal solution depend on the training data only through the sample covariance matrix $\covsa$. This is reassuring because $\covsa$ is known to be a sufficient statistic for the precision matrix. As solving~\eqref{eq:dro} is tantamount to solving~\eqref{eq:DROSimplified}, it suffices to devise solution procedures for the simplified estimation problem~\eqref{eq:DROSimplified} or its equivalent reformulations~\eqref{eq:Reformulation2} and~\eqref{eq:Reformulation1}.


We emphasize that the strictly convex log-determinant term in the objective of~\eqref{eq:Reformulation1} is supported by state-of-the-art interior point solvers for semidefinite programs such as SDPT3~\cite{ref:Tutuncu-03}. In principle, problem~\eqref{eq:Reformulation1} can therefore be implemented directly in MATLAB using the YALMIP interface~\cite{ref:Lofberg-04}, for instance. In spite of its theoretical tractability, however, the semidefinite program~\eqref{eq:Reformulation1} quickly becomes excruciatingly large, and direct solution with a general purpose solver becomes impracticable already for moderate values of~$p$. This motivates us to investigate practically relevant special cases in which the estimation problem~\eqref{eq:DROSimplified} can be solved either analytically (Section~\ref{sec:analytical}) or numerically using a dedicated fast Newton-type algorithm (Section~\ref{sec:numerical}).

\section{Analytical Solution without Sparsity Information}
\label{sec:analytical}
If we have no prior information about the precision matrix, it is natural to set $\X = \PD^p$. In this case, the distributionally robust estimation problem~\eqref{eq:DROSimplified} can be solved in quasi-closed form. 

\begin{theorem}[Analytical solution without sparsity information]
\label{thm:main:theorem}
If $\rho > 0$, $\X = \PD^p$ and $\covsa\in\PSD^p$ admits the spectral decomposition $\covsa = \sum_{i=1}^p \covsaeig_i \covsaeigvect_i \covsaeigvect_i^\top$ with eigenvalues $\covsaeig_i$ and corresponding orthonormal eigenvectors~$\covsaeigvect_i$, $i\le p$, then the unique minimizer of~\eqref{eq:DROSimplified} is given by $\est = \sum_{i=1}^p \estx_i \covsaeigvect_i \covsaeigvect_i^\top$, where 
\begin{subequations}
\label{eq:analytical}
	\be
	\label{eq:x:value}
	\estx_i = \estdual \left[ 1 - \frac{1}{2} \left( \sqrt{\covsaeig_i^2 (\estdual)^2  + 4 \covsaeig_i \estdual } - \covsaeig_i \estdual \right) \right] \qquad \forall i \le p
	\ee
	and $\estdual>0$ is the unique positive solution of the algebraic equation
	\be
	\label{eq:gamma:value}
	\bigg( \rho^2 - \frac{1}{2} \sum_{i=1}^p \covsaeig_i \bigg)  \dualvar - p + \frac{1}{2}  \sum_{i=1}^p   \sqrt{\covsaeig_i^2 \dualvar^2  + 4 \covsaeig_i \dualvar } = 0. 
	\ee
\end{subequations}
\end{theorem}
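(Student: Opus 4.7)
The starting point is the equivalent reformulation~\eqref{eq:Reformulation2} from Theorem~\ref{thm:refor}, which with $\mc X=\PD^p$ imposes no structural restriction beyond positive definiteness. The plan is to leverage rotation equivariance of the objective to reduce the problem to the case of a diagonal sample covariance matrix, observe that the joint minimization then decouples into $p$ scalar subproblems linked only through the scalar variable~$\gamma$, solve these in closed form, and finally reduce the outer problem to the algebraic equation~\eqref{eq:gamma:value}.

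For the first step I would check that the objective of~\eqref{eq:Reformulation2} is invariant under the simultaneous conjugation $(X,\covsa)\mapsto(UXU^\top,U\covsa U^\top)$ for any orthogonal $U$; writing the spectral decomposition $\covsa=V\eigmat V^\top$ with $\eigmat=\diag(\eigval_1,\dots,\eigval_p)$ then reduces the problem to the same one with $\covsa$ replaced by~$\eigmat$, whose minimizer is conjugated back by~$V$. To see that this reduced minimizer is itself diagonal, note that the reduced objective is invariant under $X\mapsto DXD$ for any diagonal sign matrix~$D$, and strict convexity in $X$ inherited from~$-\log\det X$ forces the unique inner minimizer $X\opt(\gamma)$ to satisfy $X\opt(\gamma)=DX\opt(\gamma)D$, hence to be diagonal. (Degeneracies from repeated eigenvalues of $\covsa$ are handled by an additional rotation-invariance argument within each eigenspace, which allows one to choose a diagonal representative of the orbit of minimizers.)

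Writing $X=\diag(x_1,\dots,x_p)$, the objective becomes $-\sum_i\log x_i+\gamma(\rho^2-\sum_i\eigval_i)+\gamma^2\sum_i \eigval_i/(\gamma-x_i)$, which is separable in the $x_i$ for fixed~$\gamma$. For each $i$ with $\eigval_i>0$, the first-order condition $(\gamma-x_i)^2=\gamma^2\eigval_i x_i$ is a quadratic in $\gamma-x_i$ whose unique root in $(0,\gamma)$ recovers~\eqref{eq:x:value}; the case $\eigval_i=0$ gives $x_i\opt=\gamma$ as a limit $x_i\to\gamma^-$, while the matrix fractional term stays finite because of the vanishing $\eigval_i$ prefactor.

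The hard part will be the algebraic simplification leading to~\eqref{eq:gamma:value}. Substituting $x_i\opt(\gamma)$ back in and differentiating in $\gamma$ (say via the envelope theorem) yields a first-order condition of the form $\rho^2-\sum_i\eigval_i+2\sum_i\sqrt{\eigval_i/x_i\opt}-\sum_i 1/x_i\opt=0$. Introducing the normalized variables $z_i\Let(\gamma-x_i\opt)/\gamma\in(0,1)$, the defining quadratic $z_i^2+\gamma\eigval_i z_i-\gamma\eigval_i=0$ yields the two identities $\sqrt{\eigval_i^2\gamma^2+4\eigval_i\gamma}=2z_i+\gamma\eigval_i$ and $\gamma\eigval_i/z_i=z_i+\gamma\eigval_i$; together with $\sqrt{\eigval_i/x_i\opt}=1/x_i\opt-1/\gamma$ and $1/x_i\opt=\eigval_i/z_i^2$ these collapse the FOC to the clean form $\sum_i z_i=p-\rho^2\gamma$, which is immediately equivalent to~\eqref{eq:gamma:value} by the first identity. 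Existence and uniqueness of a positive root~$\gamma\opt$ then follow because the left-hand side of~\eqref{eq:gamma:value} equals~$-p$ at $\gamma=0$, tends to $+\infty$ as $\gamma\to\infty$, and has derivative at least $\rho^2>0$ thanks to the elementary inequality $(\eigval_i\gamma+2)/\sqrt{\eigval_i^2\gamma^2+4\eigval_i\gamma}>1$ whenever $\eigval_i>0$. Uniqueness of $X\opt$ itself then follows from uniqueness of $\gamma\opt$ combined with strict convexity of the inner problem in $X$.
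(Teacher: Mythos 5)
Your argument is correct and reaches the same formulas, but by a genuinely different route. The paper's proof is a \emph{verification}: it first shows that~\eqref{eq:gamma:value} has a unique positive root, then plugs the proposed pair $(\est,\estdual)$ into the first-order conditions of~\eqref{eq:Reformulation2} and checks that both $\nabla_X f$ and $\partial f/\partial\gamma$ vanish, relying on the identity $(\estdual)^2 \estx_i \covsaeig_i = (\estdual-\estx_i)^2$. Your proof is a \emph{derivation}: you invoke rotation equivariance to diagonalize $\covsa$, use the sign-matrix symmetry $X\mapsto DXD$ together with uniqueness of the inner minimizer to force $X^\star(\gamma)$ to be diagonal, solve the resulting $p$ scalar subproblems to obtain~\eqref{eq:x:value}, and then reduce the outer FOC (via the envelope theorem and your substitution $z_i=(\gamma-x_i^\star)/\gamma$) to $\sum_i z_i = p-\rho^2\gamma$, which is exactly~\eqref{eq:gamma:value}. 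The two FOCs match: your form $\rho^2=\sum_i(\sqrt{\covsaeig_i}-1/\sqrt{x_i^\star})^2$ equals the paper's $\rho^2=\sum_i x_i^\star/(\estdual)^2$ under the same quadratic identity. Your approach is more instructive because it shows where the formulas come from rather than certifying them ex post; in exchange it requires the extra diagonalization and sign-matrix steps, which the paper avoids entirely by exploiting that $\covsa$ and the proposed $\est$ are simultaneously diagonalizable by construction.

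One place where the paper is more careful: the rank-deficient case. If some $\covsaeig_i=0$, the formulas give $\estx_i=\estdual$, so $(\est,\estdual)$ violates the open constraint $\dualvar I\succ X$ in~\eqref{eq:Reformulation2}, and the scalar subproblem in $x_i$ does not attain its infimum on $(0,\gamma)$. Your remark that the matrix-fractional term stays finite because of the vanishing $\covsaeig_i$ prefactor is a correct observation, but it does not by itself establish that $\est$ is optimal for the \emph{original} problem~\eqref{eq:DROSimplified}. The paper closes this gap by perturbing $\covsa\to\covsa+\eps I$, using the continuity of $\J(\covsa)$ (from Theorem~\ref{thm:refor}) and of $\g(\covsa,X)$ (from Proposition~\ref{prop:g-refor}), and passing to the limit $\eps\to 0^+$. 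You should include an analogous limiting argument rather than treating the $\covsaeig_i=0$ case as a one-line limit inside the scalar subproblem.

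Also, your parenthetical about repeated eigenvalues of $\covsa$ requiring an ``additional rotation-invariance argument'' is unnecessary: the sign-matrix argument alone forces all off-diagonal entries of the unique inner minimizer $X^\star(\gamma)$ to vanish, independently of whether $\eigmat$ has multiplicities, so no extra symmetry within eigenspaces is needed.
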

\begin{proof}
	We first demonstrate that the algebraic equation~\eqref{eq:gamma:value} admits a unique solution in $\R_+$. For ease of exposition, we define $\varphi(\dualvar)$ as the left-hand side of~\eqref{eq:gamma:value}. It is easy to see that $\varphi (0) = -p < 0$ and $ \lim_{\dualvar \to \infty} \varphi(\dualvar)/\dualvar = \rho^2$, which implies that $\varphi(\dualvar)$ grows asymptotically linearly with $\dualvar$ at slope $\rho^2>0$. By the intermediate value theorem, we may thus conclude that the equation~\eqref{eq:gamma:value} has a solution~$\estdual>0$.

	As $\covsaeig_i \dualvar + 2 > \sqrt{ \covsaeig_i^2 \dualvar^2 + 4 \covsaeig_i \dualvar}$, the derivative of $\varphi(\dualvar)$ satisfies
	\[
	\frac{\dd}{\dd\!\dualvar} \varphi (\dualvar) = \rho^2 + \frac{1}{2} \sum_{i=1}^p \covsaeig_i \left( \frac{\covsaeig_i \dualvar  + 2}{\sqrt{\covsaeig_i^2 \dualvar^2  + 4 \covsaeig_i \dualvar }} -1 \right) > 0\,,
	\]
	whereby $\varphi (\dualvar)$ is strictly increasing in $\dualvar\in\R_+$. Thus, the solution $\estdual$ is unique. The positive slope of~$\varphi(\dualvar)$ further implies via the implicit function theorem that $\estdual$ changes continuously with $\lambda_i\in\R_+$,~$i\le p$.
	
	In analogy to Proposition~\ref{prop:g-refor}, we prove the claim first under the assumption that $\covsa \succ 0$ and postpone the generalization to rank deficient sample covariance matrices. 
	Focussing on $\covsa\succ 0$, we will show that $(\est, \estdual)$ is feasible and optimal in~\eqref{eq:Reformulation2}. By Theorem~\ref{thm:refor}, this will imply that $\est$ is feasible and optimal in~\eqref{eq:DROSimplified}. 
	
	As $\estdual > 0$ and $\covsa\succ 0$, which means that $\covsaeig_i > 0$ for all $i\le p$, an elementary calculation shows that
	\[
	2>  \sqrt{\covsaeig_i^2 (\estdual)^2 + 4 \covsaeig_i \estdual} - \covsaeig_i \estdual>0 \quad \iff \quad 1>  1-\frac{1}{2}\left(\sqrt{\covsaeig_i^2 (\estdual)^2 + 4 \covsaeig_i \estdual} - \covsaeig_i \estdual\right)>0.
	\]
	Multiplying the last inequality by $\estdual$ proves that $\estdual>  \estx_i >0$ for all $i\le p$, which in turn implies that $\estdual I\succ \est \succ 0$. Thus, $(\est,\estdual )$ is feasible in~\eqref{eq:Reformulation2}, and $\est$ is feasible in~\eqref{eq:DROSimplified}.

	To prove optimality, we denote by $f(X,\dualvar)$ the objective function of problem~\eqref{eq:Reformulation2} and note that its gradient with respect to $X$ vanishes at $(\est, \estdual)$. Indeed, we have
	\be
		\notag
		\begin{aligned}
		\nabla_X f(\est, \estdual) &= -(\est)^{-1} + (\estdual)^2 (\estdual I - \est)^{-1} \covsa (\estdual I - \est)^{-1} \\
		&=  \ds \sum_{i=1}^p \left((\estdual)^2 (\estdual - \estx_i)^{-2} \covsaeig_i - (\estx_i)^{-1} \right) v_i v_i^\top \\
	 	&= \ds \sum_{i=1}^p \frac{(\estdual)^2 \estx_i \covsaeig_i - (\estdual - \estx_i)^2}{(\estdual - \estx_i)^2 x_i} v_i v_i^\top = 0,
	 \end{aligned}
	 \ee
	where the first equality exploits the basic rules of matrix calculus (see, {\em e.g.}, \cite[p.~631]{ref:Bernstein-2009}), the second equality holds because $\covsa$ and $X$ share the same eigenvectors $v_i$, $i\le p$, and the last equation follows from the identity
	\be
	\label{eq:analytical:nice:equation:for:x} 
	(\estdual)^2 \estx_i \covsaeig_i = (\estdual - \estx_i)^2 \quad \forall i\le p,
	\ee	
	which is a direct consequence of the definitions of $\estdual$ and $\estx_i$, $i\le p$, in~\eqref{eq:analytical}. Similarly, the partial derivative of $f(X,\dualvar)$ with respect to $\dualvar$ vanishes at $(\est, \estdual)$, too. In fact, we have
	\be
		\begin{aligned}
			\frac{\partial}{\partial\dualvar} f(\est, \estdual) &= \rho^2 - \Tr{\covsa} + 2 \estdual \Tr {(\estdual I - \est)^{-1} \covsa}  - (\estdual)^2 \Tr {(\estdual I - \est)^{-1} \covsa (\estdual I - \est)^{-1}} \notag \\
			&=  \rho^2 - \ds \sum_{i=1}^p \covsaeig_i \left(1 - \frac{2 \estdual }{\estdual - \estx_i} + \frac{(\estdual)^2 }{(\estdual - \estx_i)^2} \right) = \rho^2 - \ds \sum_{i=1}^p \frac{(\estx_i)^2}{(\estdual - \estx_i)^2} \covsaeig_i  \notag \\
			&= \frac{1}{(\estdual)^2} \left(\rho^2 (\estdual)^2 - \sum_{i = 1}^p \estx_i \right) = 0,
		\end{aligned}
	\ee
	where the second equality expresses $\covsa$ and $X$ in terms of their respective spectral decompositions, the fourth equality holds due to~\eqref{eq:analytical:nice:equation:for:x}, and the last equality follows from the observation that $\rho^2 (\estdual)^2 = \sum_{i=1}^p \estx_i$. In summary, we have shown that $(\est, \estdual)$ satisfies the first-order optimality conditions of the convex optimization problem~\eqref{eq:Reformulation2}, which ensures that $\est$ is optimal in~\eqref{eq:DROSimplified}.
	
	Consider now any (possibly singular) sample covariance matrix $\covsa\in\PSD^p$. As $\estdual>0$, similar arguments as in the first part of the proof show that $\estdual\ge \estx_i>0$ for all $i\le p$, which in turn implies that $\estdual I\succeq \est\succ 0$. Moreover, if $\covsa$ has at least one zero eigenvalue, it is easy to see that $\estdual I\not \succ \est$, in which case $(\est,\estdual)$ fails to be feasible in~\eqref{eq:Reformulation2}. However, $\est$ remains feasible and optimal in~\eqref{eq:DROSimplified}. To see this, consider the invertible sample covariance matrix $\covsa+\eps I\succ 0$ for some $\eps>0$, and denote by $(\est(\eps),\estdual(\eps))$ the corresponding minimizer of problem~\eqref{eq:Reformulation2} as constructed in~\eqref{eq:analytical}. As the solution of the algebraic equation~\eqref{eq:gamma:value} depends continuously on the eigenvalues of the sample covariance matrix, we conclude that the auxiliary variable $\estdual(\eps)$ and---by virtue of~\eqref{eq:x:value}---the estimator $\est(\eps)$ are both continuous in $\eps\in\R_+$. Thus, we find
	\begin{align*}
	\J(\covsa)= \lim_{\eps\ra0^+} \J(\covsa+\eps I) = \lim_{\eps\ra0^+} -\log \det \est(\eps)  + \g(\covsa+\eps I,\est(\eps)) = - \log\det \est + \g(\covsa,\est),
	\end{align*}
	where the first equality follows from the continuity of $\J(\covsa)$ established in Theorem~\ref{thm:refor}, the second equality holds because $\est(\eps)$ is the optimal estimator corresponding to the sample covariance matrix $\covsa+\eps I\succ 0$ in problem~\eqref{eq:DROSimplified}, and the third equality follows from the continuity of $ \g(\covsa,X)$ established in Proposition~\ref{prop:g-refor} and the fact that $\lim_{\eps\ra0^+} \est(\eps)=\est\succ 0$. Thus, $\est$ is indeed optimal in~\eqref{eq:DROSimplified}. The strict convexity of $-\log\det X$ further implies that $\est$ is unique. This observation completes the proof. 
\end{proof}


\begin{remark}[Properties of $\est$] 
The optimal distributionally robust estimator $\est$ identified in Theorem~\ref{thm:main:theorem} commutes with the sample covariance matrix $\covsa$ because both matrices share the same eigenbasis. Moreover, the eigenvalues of $\est$ are obtained from those of $\covsa$ via a nonlinear transformation that depends on the size~$\rho$ of the ambiguity set. We emphasize that all eigenvalues of $\est$ are positive for every~$\rho>0$, which implies that $\est$ is invertible. These insights suggest that $\est$ constitutes a nonlinear shrinkage estimator, which enjoys the rotation equivariance property (when all data points are rotated by $R\in \R^{p\times p}$, then $\est$ changes to $R\est R^\top$). 
\end{remark}


Theorem~\ref{thm:main:theorem} characterizes the optimal solution of problem~\eqref{eq:DROSimplified} in quasi-closed form up to the spectral decomposition of $\covsa$ and the numerical solution of equation~\eqref{eq:gamma:value}. By~\cite[Theorem~1.1]{ref:Pan-1999}, the eigenvalues of~$\covsa$ can be computed to within an absolute error~$\eps$ in $\mc O(p^3)$ arithmetic operations. Moreover, as its left-hand side is increasing in~$\estdual$, equation~\eqref{eq:gamma:value} can be solved reliably via bisection or by the Newton-Raphson method. The following lemma provides a priori bounds on $\estdual$ that can be used to initialize the bisection interval.

\begin{lemma}[Bisection interval]
	\label{lemma:BoundForLambda}
	For $\rho>0$, the unique solution of~\eqref{eq:gamma:value} satisfies $\estdual \in [\dualvar_{\min}, \dualvar_{\max}]$, where
	\be
	\label{eq:LowerUpperDefinition}
	\dualvar_{\min} = \frac{p^2 \covsaeig_{\max} + 2 p\rho^2 - p \sqrt{p^2 \covsaeig_{\max}^2 + 4 p \rho^2 \covsaeig_{\max}}}{2\rho^4} > 0, \qquad 
	\dualvar_{\max} = \min\left\{\frac{p}{\rho^2}, \frac{1}{\rho}\sqrt{\sum_{i=1}^p \frac{1}{\covsaeig_i}} \right\},
	\ee
	and $\covsaeig_{\max}$ denotes the maximum eigenvalue of $\covsa$. 
\end{lemma}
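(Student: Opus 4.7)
The plan is to sandwich $\estdual$ between $\dualvar_{\min}$ and $\dualvar_{\max}$ by exploiting the strict monotonicity of the function $\varphi(\dualvar)$ defined as the left-hand side of~\eqref{eq:gamma:value}, which was already established on $\R_+$ in the proof of Theorem~\ref{thm:main:theorem}.

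For the upper bound, I would leverage the identity $\rho^2(\estdual)^2 = \sum_{i=1}^p \estx_i$ that was obtained as a byproduct in the proof of Theorem~\ref{thm:main:theorem}. On the one hand, the construction~\eqref{eq:x:value} implies $0 < \estx_i \leq \estdual$, which summed gives $\rho^2(\estdual)^2 \leq p\,\estdual$, hence $\estdual \leq p/\rho^2$. On the other hand, rearranging equation~\eqref{eq:analytical:nice:equation:for:x} as $\estx_i = (\estdual - \estx_i)^2 / (\covsaeig_i (\estdual)^2)$ and noting that $(\estdual - \estx_i)^2 < (\estdual)^2$ yields $\estx_i < 1/\covsaeig_i$ whenever $\covsaeig_i > 0$. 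Summing the latter bound gives $\rho^2(\estdual)^2 < \sum_{i=1}^p 1/\covsaeig_i$, i.e., $\estdual < \rho^{-1} \sqrt{\sum_{i=1}^p 1/\covsaeig_i}$; if some $\covsaeig_i$ vanishes, this bound is vacuous. Combining both estimates delivers $\estdual \leq \dualvar_{\max}$.

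For the lower bound, I would dominate $\varphi$ by a simpler auxiliary function whose positive root is computable in closed form. The key elementary inequality is
\[
\tfrac{1}{2}\bigl(\sqrt{t^2 + 4t} - t\bigr) \leq \sqrt{t}\qquad \forall\, t \geq 0,
\]
which follows by squaring the equivalent inequality $2\sqrt{t} + t \geq \sqrt{t^2 + 4t}$. Applying this with $t = \covsaeig_i \dualvar$, summing, and using $\sqrt{\covsaeig_i \dualvar} \leq \sqrt{\covsaeig_{\max}\dualvar}$ yields
\[
\varphi(\dualvar) \;\leq\; \psi(\dualvar) \;\Let\; \rho^2 \dualvar + p\sqrt{\covsaeig_{\max}\dualvar} - p.
\]
The function $\psi$ is strictly increasing on $\R_+$ with $\psi(0) = -p < 0$, so it admits a unique positive root. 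Substituting $u = \sqrt{\dualvar}$ reduces $\psi(\dualvar) = 0$ to the quadratic $\rho^2 u^2 + p\sqrt{\covsaeig_{\max}}\,u - p = 0$; its positive root, once squared and simplified via the identity $(\sqrt{p^2\covsaeig_{\max} + 4p\rho^2} - p\sqrt{\covsaeig_{\max}})^2 = 2p^2 \covsaeig_{\max} + 4p\rho^2 - 2p\sqrt{p^2 \covsaeig_{\max}^2 + 4p\rho^2 \covsaeig_{\max}}$, matches exactly the expression for $\dualvar_{\min}$ in~\eqref{eq:LowerUpperDefinition}; in particular $\dualvar_{\min} > 0$. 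Finally, $\varphi(\dualvar_{\min}) \leq \psi(\dualvar_{\min}) = 0 = \varphi(\estdual)$ combined with the strict monotonicity of $\varphi$ forces $\dualvar_{\min} \leq \estdual$.

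The main technical step is spotting the majorization $\tfrac{1}{2}(\sqrt{t^2+4t}-t)\leq\sqrt{t}$, which is what produces the precise functional form of $\psi$ needed to recover~\eqref{eq:LowerUpperDefinition}; once that inequality is in place, the remainder is a short algebraic reduction and an appeal to the monotonicity of $\varphi$.
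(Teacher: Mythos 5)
Your proof is correct and follows essentially the same route as the paper: both dominate $\varphi$ from above by $\rho^2\dualvar - p + p\sqrt{\covsaeig_{\max}\dualvar}$ (your inequality $\tfrac{1}{2}(\sqrt{t^2+4t}-t)\leq\sqrt{t}$ is exactly the paper's $\sqrt{a+b}\leq\sqrt{a}+\sqrt{b}$ in disguise) and both use $\estx_i\leq 1/\covsaeig_i$ together with $\rho^2(\estdual)^2=\sum_i\estx_i$ for the finite upper bound. The only cosmetic difference is your derivation of $\estdual\leq p/\rho^2$ from $\estx_i\leq\estdual$ summed against the identity $\rho^2(\estdual)^2=\sum_i\estx_i$, whereas the paper obtains it from the pointwise bound $\rho^2\dualvar-p\leq\varphi(\dualvar)$; both one-line arguments are fine, and yours has the small advantage of reusing a single identity for both parts of $\dualvar_{\max}$.
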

\begin{proof}
	By the definitions of $\estdual$ and $\estx_i$ in~\eqref{eq:analytical} we have $\covsaeig_i \estx_i = (\estdual-\estx_i)^2/(\estdual)^2 <1$, which implies that $\estx_i \leq \frac{1}{\covsaeig_i}$. Using~\eqref{eq:analytical} one can further show that $(\estdual)^2 = \frac{1}{\rho^2}\sum_{i=1}^p \estx_i \leq \frac{1}{\rho^2}\sum_{i=1}^p \frac{1}{\covsaeig_i}$, which is equivalent to $\estdual\le \frac{1}{\rho} (\sum_{i=1}^p \frac{1}{\covsaeig_i})^{\frac{1}{2}}$. Note that this upper bound on $\estdual$ is finite only if $\covsaeig_i>0$ for all $i\le p$. To derive an upper bound that is universally meaningful, we denote the left-hand side of~\eqref{eq:gamma:value} by $\varphi(\dualvar)$ and note that $\rho^2 \dualvar - p \leq \varphi(\dualvar)$ for all $\dualvar\ge 0$. This estimate implies that $\estdual \leq \frac{p}{\rho^2}$. Thus, we find $\estdual\le\min\{\frac{p}{\rho^2}, \frac{1}{\rho} (\sum_{i=1}^p \frac{1}{\covsaeig_i})^{\frac{1}{2}}\} = \dualvar_{\max}$. 
	
	To derive a lower bound on $\estdual$, we set $\covsaeig_{\max} = \max_{i\le p}\covsaeig_i$ and observe that
	\[
	\varphi(\dualvar) \leq \rho^2 \dualvar - p + \sum_{i=1}^p \sqrt{\covsaeig_i \dualvar} \leq \rho^2 \dualvar - p + p \sqrt{\covsaeig_{\max} \dualvar}\,,
	\]
	where the first inequality holds because $\sqrt{a+b} \leq \sqrt{a} + \sqrt{b}$ for all $a,b\ge 0$. As the unique positive zero of the right-hand side, $\dualvar_{\min}$ provides a nontrivial lower bound on $\estdual$. Thus, the claim follows.
\end{proof}

Lemma~\ref{lemma:BoundForLambda} implies that $\estdual$ can be computed via the standard bisection algorithm to within an absolute error of~$\eps$ in $\log_2((\dualvar_{\max}-\dualvar_{\min})/\eps)=\mc O(\log_2 p)$ iterations. As evaluating the left-hand side of~\eqref{eq:gamma:value} requires only $\mc O(p)$ arithmetic operations, the computational effort for constructing $\est$ is largely dominated by the cost of the spectral decomposition of the sample covariance matrix.


\begin{remark}[Numerical stability]
	If both $\estdual$ and $\covsaeig_i$ are large numbers, then formula~\eqref{eq:x:value} for $\estx_i$ becomes numerically instable. A mathematically equivalent but numerically more robust reformulation of~\eqref{eq:x:value} is
	\[
	\estx_i = \estdual \left( 1 - \frac{2}{1+\sqrt{1+\frac{4}{\covsaeig_i \estdual}}} \right) \,.
	\]
\end{remark}

In the following we investigate the impact of the Wasserstein radius $\rho$ 
on the optimal Lagrange multiplier~$\estdual$ and the corresponding optimal estimator $\est$. 

\begin{proposition}[Sensitivity analysis]
	\label{prop:sens}
	Assume that the eigenvalues of $\covsa$ are sorted in ascending order, that is, $\covsaeig_1\le \cdots \le \covsaeig_p$. If $\estdual(\rho)$ denotes the solution of \eqref{eq:gamma:value}, and $\estx_i(\rho)$, $i\le p$, represent the eigenvalues of $\est$ defined in \eqref{eq:x:value}, which makes the dependence on $\rho>0$ explicit, then the following assertions hold:
	\begin{itemize}
	\item[(i)] $\estdual(\rho)$ decreases with $\rho$, and $\lim_{\rho \to \infty} \estdual(\rho) = 0$;
	\item[(ii)] $\estx_i(\rho)$ decreases with $\rho$, and $ \lim_{\rho \to \infty} \estx_i(\rho) = 0$ for all $i\le p$;
	\item[(iii)] the eigenvalues of $\est$ are sorted in descending order, that is, $\estx_1(\rho)\ge \cdots \ge \estx_p(\rho)$ for every $\rho>0$;
	\item[(vi)] the condition number $\estx_1(\rho)/\estx_p(\rho)$ of $\est$ decreases with $\rho$, and $\lim_{\rho \to \infty} \estx_1(\rho)/\estx_p(\rho)=1$.
	\end{itemize}
\end{proposition}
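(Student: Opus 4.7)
My plan is to reduce all four claims to monotonicity properties of a single univariate function. Using the identity $\covsaeig_i (\estdual)^2 \estx_i=(\estdual-\estx_i)^2$ derived in equation~\eqref{eq:analytical:nice:equation:for:x} in the proof of Theorem~\ref{thm:main:theorem}, and taking positive square roots (valid since $\estdual>\estx_i>0$), I would introduce the normalized variable $y_i(\rho)\Let \estx_i(\rho)/\estdual(\rho)\in(0,1)$ satisfying the implicit relation
\[
\covsaeig_i \estdual = \psi(y_i), \qquad \psi(y)\Let \frac{(1-y)^2}{y}=\frac{1}{y}-2+y.
\]
A one-line calculation gives $\psi'(y)=1-1/y^2<0$ on $(0,1)$, so $\psi$ is a strictly decreasing bijection from $(0,1)$ onto $(0,\infty)$, and it satisfies the convenient identity $\psi(y)/\psi'(y)=-y(1-y)/(1+y)$.

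For assertion~(i), I would rewrite equation~\eqref{eq:gamma:value} as $\varphi(\estdual,\rho)=0$. The derivative $\partial_{\dualvar}\varphi>0$ is already established immediately after the statement of Theorem~\ref{thm:main:theorem}, while $\partial_\rho\varphi=2\rho\estdual>0$ is immediate, so the implicit function theorem yields $\estdual'(\rho)<0$; the a priori bound $\estdual\le p/\rho^2$ from Lemma~\ref{lemma:BoundForLambda} then forces $\estdual(\rho)\to 0$ as $\rho\to\infty$.

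For (iii) and (ii), the reduction does most of the work. For fixed $\estdual$, the map $\covsaeig_i\mapsto y_i=\psi^{-1}(\covsaeig_i \estdual)$ is strictly decreasing, so the ordering $\covsaeig_1\le\cdots\le\covsaeig_p$ flips to $y_1\ge\cdots\ge y_p$, and multiplying by the common factor $\estdual>0$ proves (iii). For (ii), implicit differentiation of $\covsaeig_i\estdual=\psi(y_i)$ in $\rho$ combined with the identity above collapses to
\[
\estx_i'(\rho)=\estdual'(\rho)\Bigl(y_i+\frac{\psi(y_i)}{\psi'(y_i)}\Bigr)=\estdual'(\rho)\cdot\frac{2y_i^2}{1+y_i}<0,
\]
and $\estx_i(\rho)\to 0$ follows from the sandwich $0<\estx_i<\estdual\to 0$.

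For (iv), the condition number equals $y_1/y_p$, so I would analyze its logarithmic derivative. A parallel implicit differentiation, after dividing by $y_i$, yields the clean formula
\[
\frac{y_i'(\rho)}{y_i(\rho)} = -\frac{\estdual'(\rho)}{\estdual(\rho)}\cdot\frac{1-y_i}{1+y_i}.
\]
Since the map $y\mapsto(1-y)/(1+y)$ is strictly decreasing on $(0,1)$ and (iii) yields $y_1\ge y_p$, this gives $y_1'/y_1\le y_p'/y_p$, hence $(\ln(y_1/y_p))'\le 0$. Finally, $\covsaeig_i\estdual\to 0$ forces $\psi(y_i)\to 0$ and hence $y_i\to 1$, so $y_1/y_p\to 1$. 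The step I expect to be the main obstacle is (iv): a direct attack on $\estx_1(\rho)/\estx_p(\rho)$ produces a messy two-variable expression involving square roots, and the key insight is that once one passes to the $y_i$ variables the universal factor $\estdual'/\estdual$ drops out of the comparison, leaving only the $i$-dependent factor $(1-y_i)/(1+y_i)$ whose monotonicity is transparent.
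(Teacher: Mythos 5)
Your proof is correct and takes a genuinely different, and in my view cleaner, route than the paper's. The paper proves (ii) by directly computing $\partial\estx_i/\partial\estdual$ from formula \eqref{eq:x:value} and then verifying the algebraic inequality $(1+z)\sqrt{z^2+4z}\ge z^2+3z$; it proves (iii) by showing that the auxiliary map $\covsaeig\mapsto \sqrt{\covsaeig^2(\estdual)^2+4\covsaeig\estdual}-\covsaeig\estdual$ is increasing; and for (iv) it appeals to a ``tedious but straightforward calculation'' that $\frac{\partial}{\partial\estdual}\log(\estx_1/\estx_p)>0$ without displaying it. Your reformulation via the normalized variable $y_i=\estx_i/\estdual$ and the single relation $\covsaeig_i\estdual=\psi(y_i)$ with $\psi(y)=(1-y)^2/y$ is precisely the structural identity \eqref{eq:analytical:nice:equation:for:x} in disguise, and it converts all four claims into elementary monotonicity statements about $\psi$ on $(0,1)$. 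The payoff is especially visible in (iv): since $\estx_1/\estx_p=y_1/y_p$, the common prefactor $-\estdual'/\estdual$ cancels in the logarithmic derivative, leaving $y_i'/y_i=-\frac{\estdual'}{\estdual}\cdot\frac{1-y_i}{1+y_i}$, and monotonicity of $y\mapsto(1-y)/(1+y)$ together with (iii) immediately gives $(\log(y_1/y_p))'\le 0$. This is exactly the ``tedious'' computation the paper suppresses, rendered transparent. Your (i) is the same as the paper's (implicit function theorem plus the a priori bound $\estdual\le p/\rho^2$), and the limit in (iv) via $\psi(y_i)=\covsaeig_i\estdual\to 0\Rightarrow y_i\to 1$ is a nice alternative to the paper's direct substitution of $\estdual\to 0$ into \eqref{eq:x:value}. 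All computations check out, including the identity $\psi(y)/\psi'(y)=-y(1-y)/(1+y)$ and the resulting factor $2y_i^2/(1+y_i)$ in $\estx_i'$.
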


\begin{proof}
	As the left-hand side of~\eqref{eq:gamma:value} is strictly increasing in $\rho$, it is clear that $\estdual(\rho)$ decreases with $\rho$. Moreover, the a priori bounds on $\estdual(\rho)$ derived in Lemma~\ref{lemma:BoundForLambda} imply that
	\[
		0\le \lim_{\rho \to \infty} \estdual(\rho) \le \lim_{\rho \to \infty} \frac{p}{\rho^2}=0.
	\]
	Thus, assertion~(i) follows. Next, by the definition of the eigenvalue~$\estx_i$ in~\eqref{eq:x:value}, we have
	\[
	\frac{\partial \estx_i}{\partial \estdual} 
	= 1 + \covsaeig_i \estdual - \frac{1}{2} \left( \sqrt{\covsaeig_i^2 (\estdual)^2 + 4 \covsaeig_i \estdual} + \frac{\covsaeig_i^2 (\estdual)^2 + 2 \covsaeig_i \estdual}{\sqrt{\covsaeig_i^2 (\estdual)^2 + 4 \covsaeig_i \estdual}}\right)
	= 1 +  \covsaeig_i \estdual - \frac{\covsaeig_i^2 (\estdual)^2 + 3 \covsaeig_i \estdual}{\sqrt{\covsaeig_i^2 (\estdual)^2 + 4 \covsaeig_i \estdual}}.
	\]
	Elementary algebra indicates that $(1+z)\sqrt{z^2 + 4z} \geq z^2 + 3z$ for all $z\ge 0$, whereby the right-hand side of the above expression is strictly positive for every $\covsaeig_i\ge 0$ and $\estdual\ge 0$. We conclude that $\estx_i$ grows with $\estdual$ and, by the monotonicity of $\estdual(\rho)$ established in assertion~(i), that $\estx_i(\rho)$ decreases with $\rho$. As $\estdual(\rho)$ drops to $0$ for large $\rho$ and as the continuous function~\eqref{eq:x:value} evaluates to $0$ at $\estdual=0$, we thus find that $\estx_i(\rho)$ converges to $0$ as $\rho$ grows. These observations establish assertion~(ii). As for assertion~(iii), use~\eqref{eq:x:value} to express the $i$-th eigenvalue of $\est$ as $\estx_i=1-\frac{1}{2}\psi(\covsaeig_i)$, where the auxiliary function $\psi (\covsaeig) = \sqrt{\covsaeig^2 (\estdual)^2 + 4 \covsaeig \estdual} - \covsaeig \estdual$ is defined for all $\covsaeig\ge 0$. Note that $\psi(\covsaeig)$ is monotonically increasing because
	\[
	\frac{\dd}{\dd\!\covsaeig} \psi(\covsaeig) = \frac{\covsaeig (\estdual)^2 + 2 \estdual}{\sqrt{\covsaeig^2 (\estdual)^2 + 4 \covsaeig \estdual}} - \estdual
	= \estdual \left( \frac{\covsaeig \estdual + 2}{\sqrt{\covsaeig^2 (\estdual)^2 + 4 \covsaeig \estdual}} - 1 \right) > 0\,.
	\]
	As $\covsaeig_{i+1}\ge \covsaeig_i$ for all $i<p$, we thus have $\psi(\covsaeig_{i+1})\ge \psi(\covsaeig_i)$, which in turn implies that $\estx_{i+1}\le\estx_j$. Hence, assertion~(iii) follows. As for assertion~(iv), note that by~\eqref{eq:x:value} the condition number of $\est$ is given by
	\[
	{\estx_1(\rho) \over \estx_p(\rho)} = 
	\frac{1 - \frac{1}{2} \left( \sqrt{\covsaeig_1^2 \estdual(\rho)^2 + 4 \covsaeig_1 \estdual(\rho)} - \covsaeig_1 \estdual(\rho) \right)}{1 - \frac{1}{2} \left( \sqrt{\covsaeig_p^2 \estdual(\rho)^2 + 4 \covsaeig_p \estdual(\rho)} - \covsaeig_p \estdual(\rho) \right)}.
	\]
	The last expression converges to $1$ as $\rho$ tends to infinity because $\estdual(\rho)$ vanishes asymptotically due to assertion~(i). A tedious but straightforward calculation using~\eqref{eq:x:value} shows that $\frac{\partial}{\partial \estdual} \log (\estx_1/\estx_p)>0$, which implies via the monotonicity of the logarithm that $\estx_1/\estx_p$ increases with $\estdual$. As $\estdual(\rho)$ decreases with $\rho$ by virtue of assertion~(i), we may then conclude that the condition number $\estx_1(\rho)/\estx_p(\rho)$ decreases with~$\rho$. 
	\end{proof}

%

Figure~\ref{figure:eig_gamma_rho} visualizes the dependence of $\estdual$ and $\est$ on the Wasserstein radius $\rho$ in an example where $p=5$ and the eigenvalues of $\covsa$ are given by $\covsaeig_i=10^{i-3}$ for $i \le 5$. Figure~\ref{figure:eig_gamma_rho:1} displays $\estdual$ as well as its a priori bounds $\dualvar_{\min}$ and $\dualvar_{\max}$ derived in Lemma~\ref{lemma:BoundForLambda}. Note first that $\estdual$ drops monotonically to $0$ for large $\rho$, which is in line with Proposition~\ref{prop:sens}(i). As $\estdual$ represents the Lagrange multiplier of the Wasserstein constraint, which limits the size of the ambiguity set to $\rho$, this observation indicates that the worst-case expectation~\eqref{eq:g:def} displays a decreasing marginal increase in $\rho$. Figure~\ref{figure:eig_gamma_rho:2} visualizes the eigenvalues $\estx_i$, $i\le 5$, as well as the condition number of~$\est$. Note that all eigenvalues are monotonically shrunk towards $0$ and that their order is preserved as $\rho$ grows, which provides empirical support for Propositions~\ref{prop:sens}(ii) and \ref{prop:sens}(iii), while the condition number decreases monotonically to~$1$, which corroborates Proposition~\ref{prop:sens}(iv). 

In summary, we have shown that $\est$ constitutes a nonlinear shrinkage estimator that is rotation equivariant, positive definite and well-conditioned. Moreover, $(\est)^{-1}$ preserves the order of the eigenvalues of~$\covsa$. We emphasize that neither the interpretation of $\est$ as a shrinkage estimator nor any of its desirable properties---most notably the improvement of its condition number with $\rho$---were dictated {\em ex ante}. Instead, these properties arose naturally from an intuitively appealing distributionally robust estimation scheme. In contrast, existing estimation schemes sometimes impose {\em ad hoc} constraints on condition numbers; see,~{\em e.g.},~\cite{ref:Won-2013}. On the downside, as $\est$ shares the same eigenbasis as the sample covariance matrix $\covsa$, it does not prompt a new robust principal component analysis. We henceforth refer to $\est$ as the {\em Wasserstein shrinkage estimator}.


\begin{figure*} [t]
	\centering
	\subfigure[Lagrange multiplier $\estdual$ and its a priori bounds $\dualvar_{\min}$ and $\dualvar_{\max}$ from Lemma~\ref{lemma:BoundForLambda}.]{\label{figure:eig_gamma_rho:1}
		\includegraphics[width=0.4\columnwidth]{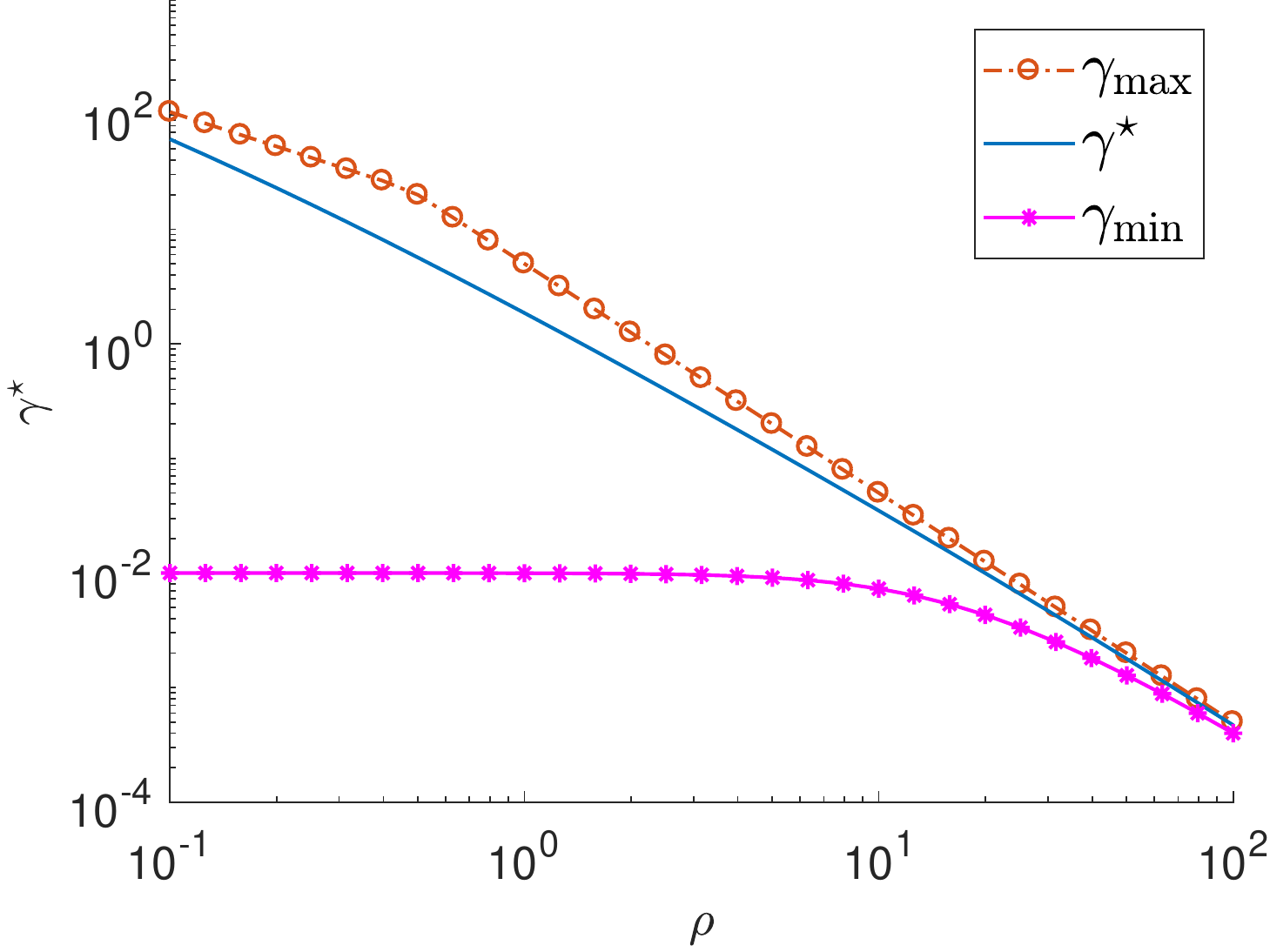}} \hspace{1mm}
	\subfigure[Eigenvalues (left axis) and condition number (round marker - right axis) of $\est$.]{\label{figure:eig_gamma_rho:2} 
		\includegraphics[width=0.4\columnwidth]{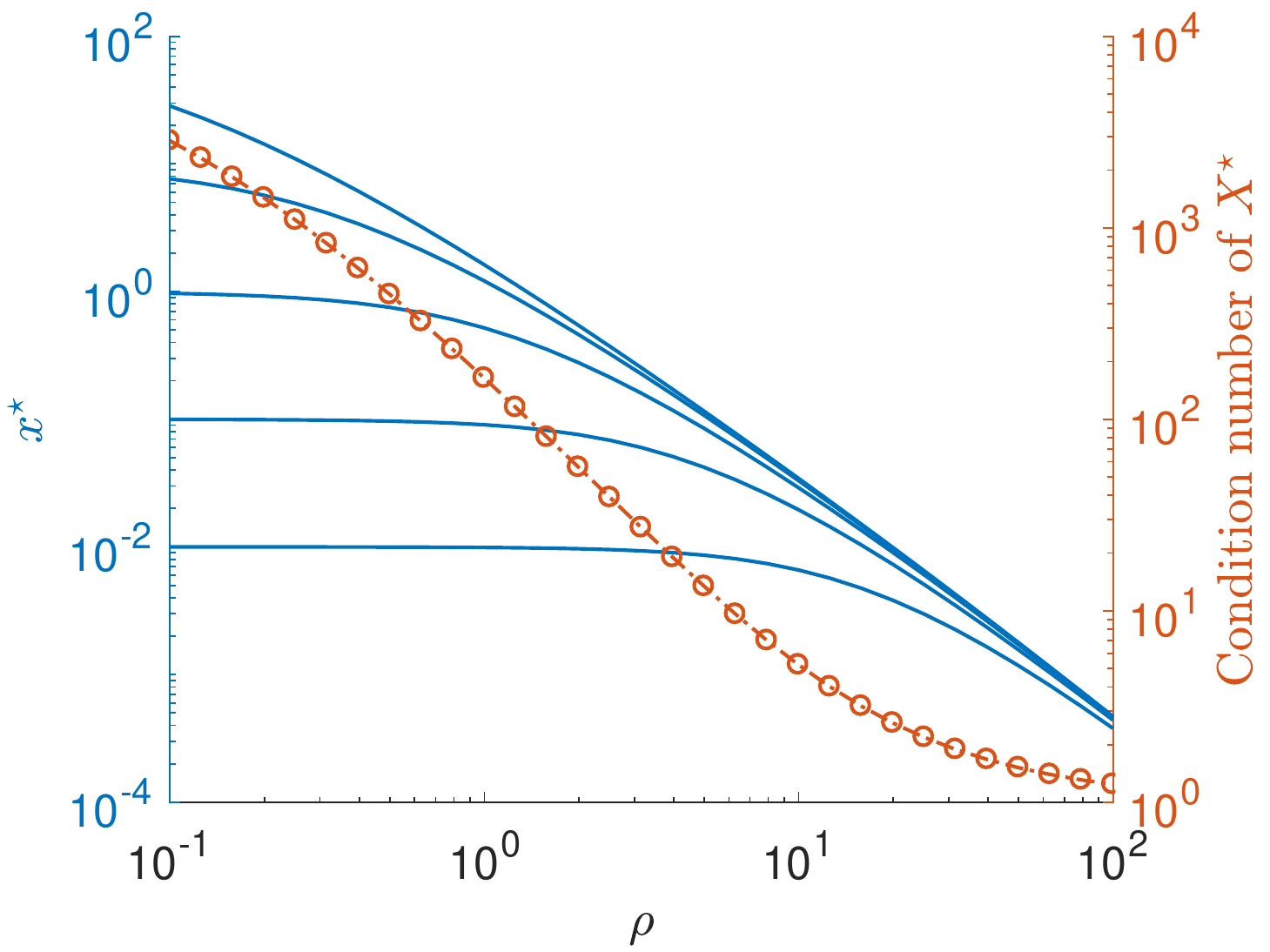}} \hspace{1mm}
	\caption{Dependence of the Lagrange multiplier $\estdual$ (left panel) as well as the eigenvalues $\estx_i$, $i\le 5$, and the condition number $\estx_5/\estx_1$ of the optimal estimator $\est$ (right panel) on $\rho$. 
	}
	\label{figure:eig_gamma_rho}
\end{figure*}

%

\section{Numerical Solution with Sparsity Information}
\label{sec:numerical}
We now investigate a more general setting where $\mc X$ may be a strict subset of $\PD^p$, which captures a prescribed conditional independence structure of $\xi$. Specifically, we assume that there exists $\mc E\subseteq \{1,\ldots,p\}^2$ such that the random variables $\xi_i$ and $\xi_j$ are conditionally independent given $\xi_{-\{i,j\}}$ for any pair $(i,j)\in\mc E$, where $\xi_{-\{i,j\}}$ represents the truncation of the random vector $\xi$ without the components~$\xi_i$ and~$\xi_j$. It is well known that if~$\xi$ follows a normal distribution with covariance matrix $S\succ 0$ and precision matrix $X=S^{-1}$, then $\xi_i$ and $\xi_j$ are conditionally independent given $\xi_{-(i,j)}$ if and only if $X_{ij}=0$. This reasoning forms the basis of the celebrated Gaussian graphical models, see, {\em e.g.}, \cite{ref:Lauritzen-1996}. Any prescribed conditional independence structure of $\xi$ can thus conveniently be captured by the feasible set
\[
	\X = \{ X \in \PD^p : X_{ij} = 0 \quad \forall (i, j) \in \mc E \}.
\]
We may assume without loss of generality that $\mc E$ inherits symmetry from $X$, that is, $(i,j)\in\mc E \implies (j,i)\in\mc E$. In Section~\ref{sec:analytical} we have seen that the robust maximum likelihood estimation problem~\eqref{eq:DROSimplified} admits an analytical solution when $\mc E=\emptyset$. In the general case, analytical tractability is lost. Indeed, if $\mc E\neq \emptyset$, then even the nominal estimation problem obtained by setting $\rho=0$ requires numerical solution~\cite{dahletal2005}. In this section we develop a Newton-type algorithm to solve~\eqref{eq:DROSimplified} in the presence of prior conditional independence information. For the sake of consistency, we will refer to the optimal solution of problem~\eqref{eq:DROSimplified} as the {\em Wasserstein shrinkage estimator} even in the presence of sparsity constraints.

\begin{remark}[Conditional independence information in $\Ambi$]
	We emphasize that our proposed estimation model accounts for the prescribed conditional independence structure only in the feasible set $\X$ but not in the ambiguity set $\Ambi$. Otherwise, the ambiguity set would have to be redefined as
	\[
		\Ambi = \left\{ \Q \in \mc N^p_0~:~\Wass(\Q, \wh{\mbb P}) \leq \rho , ~ (\EE^{\Q} [\xi \xi^{\top}]^{-1})_{ij} = 0 \quad \forall (i,j) \in \mc E\right\} .
	\]
	While conceptually attractive, this new ambiguity set is empty even for some $\rho > 0$ because the inverse sample covariance matrix $\covsa^{-1}$ violates the prescribed conditional independence relationships with probability~1. 
\end{remark}

Recall from Theorem~\ref{thm:refor} that the estimation problem~\eqref{eq:DROSimplified} is equivalent to the convex program~\eqref{eq:Reformulation2} and that the optimal value of~\eqref{eq:Reformulation2} depends continuously on $\covsa\in\PSD^p$. In the remainder of this section we may thus assume without much loss of generality that $\covsa\succ 0$. Otherwise, we can replace $\covsa$ with $\covsa+\eps I$ for some small~$\eps>0$ without significantly changing the estimation problem's solution. Inspired by~\cite{ref:Olsen-2012, ref:Hsieh-2014}, we now develop a sequential quadratic approximation algorithm for solving problem~\eqref{eq:Reformulation2} with sparsity information. Note that the set~$\mc X$ of feasible precision matrices typically fixes many entries to zero, thus reducing the effective problem dimension and making a second-order algorithm attractive even for large instances of~\eqref{eq:Reformulation2}.

The proposed algorithm starts at $X_0=I$ and at some $\dualvar_0>1$, which are trivially feasible in~\eqref{eq:Reformulation2}. In each iteration the algorithm moves from the current iterate $(X_t, \dualvar_t)$ along a feasible descent direction, which is constructed from a quadratic approximation of the objective function of problem~\eqref{eq:Reformulation2}. A judiciously chosen step size guarantees that the next iterate $(X_{t+1}, \dualvar_{t+1})$ remains feasible and has a better (lower) objective value; see Algorithm~\ref{algo:QuadraticApprox}. The construction of the descent direction relies on the following lemma.




\begin{lemma}[Fact~7.4.8 in \cite{ref:Bernstein-2009}]
\label{lem:kronecker}
For any $A,B\in\R^{p\times p}$ and $X\in\mathbb S^p$, we have \[\Tr{AXBX}=\vect(X)^\top (B\otimes A^\top) \vect(X).\]\end{lemma}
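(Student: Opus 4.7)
The plan is to derive the identity from two standard vec-identities: (i) $\Tr{CD} = \vect(C^\top)^\top \vect(D)$ for any conformable $C$ and $D$, and (ii) $\vect(MYN) = (N^\top \otimes M)\vect(Y)$. First I would apply (i) with $C = AXB$ and $D = X$, using $(AXB)^\top = B^\top X^\top A^\top$, to obtain
\[
\Tr{AXBX} = \vect(B^\top X^\top A^\top)^\top \vect(X).
\]
Since $X$ is symmetric ($X^\top = X$), identity (ii) applied with $M = B^\top$, $Y = X$, $N = A^\top$ yields $\vect(B^\top X A^\top) = (A \otimes B^\top)\vect(X)$. Transposing then produces
\[
\Tr{AXBX} = \vect(X)^\top(A^\top \otimes B)\vect(X).
\]

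The remaining step is to replace $A^\top \otimes B$ with the claimed $B \otimes A^\top$. These two Kronecker products generally differ, so the symmetry of $X$ must enter once more. A clean way is to invoke the commutation (vec-permutation) matrix $K_{p,p}$, characterized by $K_{p,p}\vect(Y) = \vect(Y^\top)$ and satisfying $K_{p,p}(A^\top \otimes B) = (B \otimes A^\top)K_{p,p}$. Since $K_{p,p}\vect(X) = \vect(X)$ by symmetry of $X$, the quadratic forms $\vect(X)^\top(A^\top \otimes B)\vect(X)$ and $\vect(X)^\top(B \otimes A^\top)\vect(X)$ coincide, closing the argument.

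The main (mild) obstacle is simply keeping straight the bookkeeping of transposes inside the Kronecker products, together with the fact that the \emph{two} ways in which symmetry of $X$ enters (once to erase the transpose in $X^\top$, once to swap the factors of the Kronecker product) are easily conflated. Should this feel delicate, a direct index-level verification bypasses it altogether: expanding
\[
\vect(X)^\top(B \otimes A^\top)\vect(X) = \sum_{i,j,k,l} A_{ij}\, X_{jk}\, B_{kl}\, X_{il}
\]
and then using $X_{il} = X_{li}$ recognizes this sum as $\sum_{i,j,k,l} A_{ij} X_{jk} B_{kl} X_{li} = \Tr{AXBX}$. This one-line component-wise check is probably the cleanest path and is what I would write up in the final version.
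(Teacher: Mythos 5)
The paper does not supply a proof of this lemma at all --- it is simply cited as Fact~7.4.8 in Bernstein's \emph{Matrix Mathematics}. Your self-contained derivation is therefore a genuine addition, and it is correct. Both routes you sketch work: the vec-identity chain followed by the commutation-matrix swap is valid (you are right that symmetry of $X$ enters twice, once to drop the transpose in $X^\top$ and once to give $K_{p,p}\vect(X)=\vect(X)$, which is needed because for non-symmetric $X$ the matrices $A^\top\otimes B$ and $B\otimes A^\top$ do \emph{not} induce the same quadratic form), and the index-level verification is a clean one-liner.

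One simplification worth noting: the commutation-matrix detour can be avoided entirely by applying $\Tr{CD}=\vect(C^\top)^\top\vect(D)$ with $C=X$ and $D=AXB$ (using the cyclic identity $\Tr{AXBX}=\Tr{XAXB}$) instead of $C=AXB$, $D=X$. This gives $\Tr{AXBX}=\vect(X^\top)^\top(B^\top\otimes A)\vect(X)$, and after invoking symmetry once ($\vect(X^\top)=\vect(X)$) the matrix $B^\top\otimes A$ can be replaced by its transpose $B\otimes A^\top$ for free, since $v^\top Mv=v^\top M^\top v$ for any $M$. That variant needs symmetry of $X$ only once and never touches $K_{p,p}$. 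Still, your index-level computation is arguably the simplest write-up and is exactly what one would keep.
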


\begin{proposition}[Descent direction]
	\label{prop:DXDl:optimal}
	Fix $(X,\dualvar)\in \PD^p\times \R_{++}$ with $\dualvar I\succ X$, and define the orthogonal projection $P:\R^{p^2+1}\rightarrow \R^{p^2+1}$ through $(Pz)_k=0$ if $k={j(p-1)+i}$ for some $(i,j)\in\mc E$; $=\frac{1}{2}z_{j(p-1)+i}+\frac{1}{2}z_{i(p-1)+j}$ if $k={j(p-1)+i}$ for some $i,j\le p$ with $(i,j)\notin\mc E$; $=z_k$ if $k=p^2+1$. Moreover, define $G \Let I-\frac{X}{\dualvar}$,
	\begin{align*}
		H \Let \begin{bmatrix} X^{-1} \otimes X^{-1}+  \frac{2}{\dualvar} G^{-1}\covsa G^{-1} \otimes G^{-1} &
		 - \frac{1}{\dualvar^2} \vect(G^{-1} [X G^{-1}\covsa +\covsa G^{-1} X ]G^{-1} ) \\ - \frac{1}{\dualvar^2} \vect(G^{-1} [X G^{-1}\covsa +\covsa G^{-1} X] G^{-1})^\top &
		 \frac{2}{\dualvar^3} \Trace[G^{-1}X G^{-1}\covsa G^{-1} X] \end{bmatrix}  \in \mathbb S^{p^2+1}
	\end{align*}
	and
	\begin{align*}
		g \Let \begin{bmatrix} \vect(G^{-1} \covsa G^{-1}-X^{-1}) \\ \rho^2+\Trace [G^{-1}\covsa (I - \frac{1}{\dualvar}G^{-1} X) -\covsa] \end{bmatrix} \in\R^{p^2+1}.
	\end{align*}
	Then, the unique solution $(\DX\opt,\Dg\opt)\in \mathbb S^p\times\R$ of the linear system
	\begin{equation}
	\label{eq:descent-direction}
	PH \left( (\vect(\DX\opt)^\top, \Dg\opt )^\top +g\right)= 0 \quad \text{and} \quad (\DX\opt)_{ij}=0 \quad \forall (i, j) \in \mc E
	\end{equation}
	represents a feasible descent direction for the optimization problem~\eqref{eq:Reformulation2} at $(X,\gamma)$.
\end{proposition}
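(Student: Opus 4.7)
The plan is to recognise the stated linear system as the first-order (KKT) optimality conditions of the natural Newton subproblem associated with~\eqref{eq:Reformulation2} at the iterate $(X,\gamma)$. Denoting by
\[
f(X,\gamma) = -\log\det X + \gamma(\rho^2 - \Tr{\covsa}) + \gamma^2 \inner{(\gamma I - X)^{-1}}{\covsa}
\]
the objective of~\eqref{eq:Reformulation2}, the Newton step should minimise the second-order Taylor model $q(\Delta) = g^\top \Delta + \tfrac{1}{2}\Delta^\top H \Delta$ over the linear subspace
\[
V = \{(\vect(A)^\top, s)^\top : A \in \mathbb S^p,\ A_{ij}=0\ \forall (i,j)\in\mc E,\ s\in\R\}
\]
of admissible perturbations, with $g$ and $H$ identified with $\nabla f(X,\gamma)$ and $\nabla^2 f(X,\gamma)$ and $P$ identified with the orthogonal projector onto $V$. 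Read in this way, the equation in the statement amounts to $P(H\Delta\opt+g)=0$ with $\Delta\opt\in V$, which is exactly the KKT system of this constrained quadratic program.

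First I would verify that $g$ and $H$ coincide with the gradient and Hessian of $f$ at $(X,\gamma)$. The log-determinant term contributes $-X^{-1}$ to the gradient and, via $\vect(AYB)=(B^\top\otimes A)\vect(Y)$, the positive-definite block $X^{-1}\otimes X^{-1}$ to the Hessian. For the matrix-fractional term the key device is the substitution $G=I-X/\gamma$, so that $(\gamma I-X)^{-1}=\gamma^{-1}G^{-1}$, together with the differentials $\dd(G^{-1})=\gamma^{-1}G^{-1}\,\dd X\,G^{-1}-\gamma^{-2}G^{-1}X G^{-1}\,\dd\gamma$. A single application of the chain rule produces $\nabla_X[\gamma^2\inner{(\gamma I-X)^{-1}}{\covsa}]=G^{-1}\covsa G^{-1}$ and, after rearranging by means of $G^{-1}-I=\gamma^{-1}G^{-1}X$ and cyclic invariance of the trace, the scalar $\gamma$-derivative appearing in the second block of $g$. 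A second application yields the four blocks of $H$ in the stated symmetric Kronecker form, where Lemma~\ref{lem:kronecker} is the precise device that converts second-order trace expressions $\Tr{A\,dX\,B\,dX}$ into $\vect(dX)^\top(B\otimes A^\top)\vect(dX)$.

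Next I would address existence, uniqueness and the descent property. The top-left block of $H$ dominates $X^{-1}\otimes X^{-1}\succ 0$, and either a Schur complement computation on $H$ or an appeal to the strict convexity of $f$ on the open set $\{(X,\gamma):\gamma I\succ X\succ 0\}$ shows that $H\succ 0$. Hence the quadratic model $q$ is strictly convex on the linear subspace~$V$, so a unique minimiser $\DX\opt\in\mathbb S^p$, $\Dg\opt\in\R$ exists and is characterised by $P(H\Delta\opt+g)=0$ together with $\Delta\opt\in V$. For the descent property I would exploit that $P=P^\top$ is an orthogonal projection and that $P\Delta\opt=\Delta\opt$; pairing the KKT identity against $\Delta\opt$ gives
\[
0=\langle \Delta\opt,\, P(H\Delta\opt+g)\rangle=(\Delta\opt)^\top H \Delta\opt + g^\top\Delta\opt,
\]
so that $g^\top\Delta\opt=-(\Delta\opt)^\top H \Delta\opt<0$ unless $\Delta\opt= 0$, which is precisely the descent condition for $f$ at $(X,\gamma)$. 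Finally, membership $\Delta\opt\in V$ already encodes the symmetry and sparsity of $\DX\opt$, while $\gamma I\succ X\succ 0$ is an open condition, so the perturbed iterate $(X+\alpha \DX\opt,\gamma+\alpha \Dg\opt)$ remains feasible in~\eqref{eq:Reformulation2} for all sufficiently small step lengths $\alpha>0$.

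The main obstacle will be the bookkeeping required to verify the cross-block and the scalar $\gamma\gamma$-entry of $H$: these demand two careful chain-rule applications to $G^{-1}$ as a function of both $X$ and $\gamma$, followed by repeated use of the identity $G^{-1}-I=\gamma^{-1}G^{-1}X$ and of cyclic invariance of the trace to bring the resulting second-order expressions into the specific Kronecker form stated in the proposition. Additional care is needed because the displayed matrix $H$ agrees with the true Hessian only as a quadratic form on the symmetric subspace---which is, however, exactly the subspace relevant to the problem since the feasibility constraint forces $\DX\opt\in\mathbb S^p$.
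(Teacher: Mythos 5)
Your proposal is correct and follows essentially the same route as the paper: expand the objective to second order to identify $g$ and $H$ with the gradient and (symmetric-subspace) Hessian, show $H\succ0$ via the Schur complement of the top-left Kronecker block, recognize $P$ as the orthogonal projector onto the admissible subspace, and read the stated linear system as the KKT conditions of the constrained quadratic model whose minimizer is a strict descent direction. The only cosmetic differences are that the paper obtains the quadratic model by composing a geometric series with a Neumann series rather than by direct differentials of $G^{-1}$, and it establishes descent by noting the QP optimum is strictly negative (since the origin is feasible) rather than by pairing the KKT identity with $\Delta\opt$; both are equivalent, and your aside about $H$ representing the Hessian only as a quadratic form on the symmetric subspace is an accurate reading of what the Kronecker formula actually delivers.
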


\begin{proof}
	We first expand the objective function of problem~\eqref{eq:Reformulation2} around $(X,\dualvar)\in \PD^p\times \R_{++}$ with $\dualvar I\succ X$. By the rules of matrix calculus, the second-order Taylor expansion of the negative log-determinant is given by
	\begin{align*}
		- \log \det& (X + \DX) = - \log \det (X) - \Tr {X^{-1} \DX} + \frac{1}{2} \Tr {X^{-1} \DX X^{-1} \DX } +\mc O(\|\DX\|^3) 
	\end{align*} 
	for $\DX\in\mathbb S^p$, see also~\cite[page 644]{ref:Boyd-04}. 
	Moreover, by using a geometric series expansion, we obtain
		\begin{align*}
			\left(I - \frac{X + \DX}{\dualvar + \Dg}\right)^{-1}
			&= \left(I - \frac{X + \DX}{\dualvar} \left( 1-\frac{\Dg}{\dualvar} + \frac{\Dg^2}{\dualvar^2} +\mc O(\|\Dg\|^3)\right) \right)^{-1} \\
			&= \left( I - \frac{X}{\dualvar} + \frac{X \Dg}{\dualvar^2} -\frac{X \Dg^2}{\dualvar^3} -\frac{\DX}{\dualvar} + \frac{\DX\Dg}{\dualvar^2}
			+\mc O(\| (\DX,\Dg)\|^3) \right)^{-1}
		\end{align*}
		for $\Dg\in\R$. Expanding the matrix inverse as a Neumann series and setting $G=I-\frac{X}{\dualvar}$, which is invertible because $\dualvar I\succ X$, the above expression can be reformulated as
		\begin{align*}
			& \textstyle G^{-\frac{1}{2}} \left( I +\frac{G^{-\frac{1}{2}} X G^{-\frac{1}{2}} \Dg}{\dualvar^2} - \frac{G^{-\frac{1}{2}} X G^{-\frac{1}{2}} \Dg^2}{\dualvar^3} 
			- \frac{G^{-\frac{1}{2}}\DX G^{-\frac{1}{2}}}{\dualvar}
			+ \frac{G^{-\frac{1}{2}}\DX G^{-\frac{1}{2}} \Dg}{\dualvar^2} +\mc O(\| (\DX,\Dg)\|^3) \right)^{-1} G^{-\frac{1}{2}}\\
			= \;&G^{-1} - \frac{G^{-1} X G^{-1} \Dg}{\dualvar^2} + \frac{G^{-1} X G^{-1} \Dg^2}{\dualvar^3} + \frac{G^{-1}\DX G^{-1}}{\dualvar} 
			- \frac{G^{-1}\DX G^{-1}\DX G^{-1} \Dg}{\dualvar^2} + \frac{G^{-1} X G^{-1} X G^{-1} \Dg^2}{\dualvar^4} \\
			& + \frac{G^{-1}\DX G^{-1} \DX G^{-1}}{\dualvar^2}
			- \frac{G^{-1} X G^{-1} \DX G^{-1}\Dg}{\dualvar^3} - \frac{G^{-1} \DX G^{-1} X G^{-1}\Dg}{\dualvar^3} +\mc O(\| (\DX,\Dg)\|^3)\,.
	\end{align*}
	Thus, the second-order Taylor expansion of the last term in the objective function of~\eqref{eq:Reformulation2} is given by
	\begin{align*}
		& (\dualvar + \Dg)^2 \Tr{\left((\dualvar + \Dg)I - (X + \DX) \right)^{-1}\covsa} = (\dualvar + \Dg) \Tr{\left(I - \frac{X + \DX}{\dualvar + \Dg}\right)^{-1}\covsa}\\
		= & \; 
		\dualvar \Tr{G^{-1}\covsa} + \Dg \Tr{G^{-1}\covsa (I - \frac{1}{\dualvar}G^{-1} X) } + \frac{\Dg^2}{\dualvar^3} \Tr{G^{-1} X G^{-1}\covsa G^{-1} X} \\
		& + \Tr{G^{-1} \covsa G^{-1} \DX }  - \frac{\Dg}{\dualvar^2} \Tr{ G^{-1}\covsa G^{-1} \DX G^{-1}X + G^{-1}\covsa G^{-1} X G^{-1} \DX}  \\
		& + \frac{1}{\dualvar} \Tr{G^{-1} \DX G^{-1}\covsa G^{-1} \DX} +\mc O(\| (\DX,\Dg)\|^3) \,,
	\end{align*}
	where the second equality follows from the Taylor expansion of the matrix inverse derived above. Using Lemma~\ref{lem:kronecker}, the objective function of~\eqref{eq:Reformulation2} is thus representable~as
	\begin{align*}
		- \log \det (X + \DX) &+ (\dualvar +\Dg) \left( \rho^2 - \Tr{\covsa} \right)	+ (\dualvar + \Dg)^2 \Tr{\left((\dualvar + \Dg)I - (X + \DX) \right)^{-1}\covsa} \\
		=~& c + g^\top ( \vect(\DX)^\top, \Dg )^\top + \frac{1}{2} ( \vect(\DX)^\top, \Dg ) H ( \vect(\DX)^\top, \Dg )^\top
		+\mc O(\| (\DX,\Dg)\|^3)
	\end{align*}
	for some $c\in\R$, where the gradient $g\in\R^p$ and the Hessian $H\in\mathbb S^p$ are defined as in the proposition statement. A feasible descent direction for problem~\eqref{eq:Reformulation2} is thus obtained by solving the auxiliary quadratic program
	\be \label{eq:quad:3}
	\begin{array}{cl}
		\Min{\DX, \Dg} & g^\top ( \vect(\DX)^\top, \Dg )^\top + \frac{1}{2} ( \vect(\DX)^\top, \Dg ) H ( \vect(\DX)^\top, \Dg )^\top \\
		\st & \DX\in\mathbb S^p,~ (\DX)_{ij}=0 \quad \forall (i, j) \in \mc E 
	\end{array}
	\ee
	Note that~\eqref{eq:quad:3} has a unique minimizer because $H$ is positive definite. Indeed, we have
        \begin{align*}
        & \frac{4}{\dualvar^4} \vect(G^{-1} X G^{-1} \covsa G^{-1})^\top
        \left(X^{-1} \otimes X^{-1} + \frac{2}{\dualvar} G^{-1} \covsa G^{-1} \otimes G^{-1} \right)^{-1} \vect(G^{-1} X G^{-1} \covsa G^{-1}) \\
        <& \frac{4}{\dualvar^4} \vect(G^{-1} X G^{-1} \covsa G^{-1})^\top
        \left(\frac{2}{\dualvar} G^{-1} \covsa G^{-1} \otimes G^{-1} \right)^{-1} \vect(G^{-1} X G^{-1} \covsa G^{-1}) \\
        =& \frac{2}{\dualvar^3} \vect(G^{-1} X G^{-1} \covsa G^{-1})^\top
        \left( G \covsa^{-1} G \otimes G \right) \vect(G^{-1} X G^{-1} \covsa G^{-1}) \label{eq:kron:inverse} \\
        =& \frac{2}{\dualvar^3} \Tr{G^{-1} X G^{-1} \covsa G^{-1} X},
        \end{align*}
	where the inequality holds because $X\otimes X$ is positive definite and $G^{-1} X G^{-1} \covsa G^{-1}\neq 0$, the first equality follows from \cite[Proposition~7.1.7]{ref:Bernstein-2009}, which asserts that $(A \otimes B)^{-1} = A^{-1} \otimes B^{-1}$ for any $A, B\in\PD^p$, and the second equality follows from Lemma~\ref{lem:kronecker}. The above derivation shows that the Schur complement of the positive definite block $X^{-1} \otimes X^{-1} + \frac{2}{\dualvar} G^{-1} \covsa G^{-1} \otimes G^{-1}$ in $H$ is a positive number, which in turn implies that the Hessian $H$ is positive definite. In the following, we denote the unique minimizer of~\eqref{eq:quad:3} by~$(\DX\opt, \Dg\opt)$. As $\DX=0$ and $\Dg=0$ is feasible in~\eqref{eq:quad:3}, it is clear that the objective value of $(\DX\opt, \Dg\opt)$ is nonpositive. In fact, as $H\succ 0$, the minimum of~\eqref{eq:quad:3} is negative unless $g=0$. Thus, $(\DX\opt, \Dg\opt)$ is a feasible descent direction.
	
	Note that $P$ defined in the proposition statement represents the orthogonal projection on the linear space 
	\[
		\mc Z = \left\{ z = (\vect(\DX)^\top, \Dg )^\top\in\R^{p^2+1}: \DX\in\mathbb S^p,\quad(\DX)_{ij}=0\quad\forall (i,j)\in\mc E \right\}.
	\]
	Indeed, it is easy to verify that $P^2=P=P^\top$ because the range and the null space of $P$ correspond to $\mc Z$ and its orthogonal complement, respectively. The quadratic program~\eqref{eq:quad:3} is thus equivalent to
	\[
		\min_{z\in\mc Z}\; \left\{g^\top z+\frac{1}{2}z^\top Hz \right\} = \min_{z\in\R^{p^2+1}} \left\{ g^\top z+\frac{1}{2}z^\top Hz: Pz=z \right\}.
	\]
	The minimizer $z\opt$ of the last reformulation and the optimal Lagrange multiplier $\mu\opt$ associated with its equality constraint correspond to the unique solution of the Karush-Kuhn-Tucker optimality conditions
	\[
		Hz\opt+g +(I-P)\mu\opt = 0, ~ (1-P)z\opt=0 \quad \iff \quad P(Hz\opt+g) = 0, ~ (1-P)z\opt=0,
	\]
	which are mainfestly equivalent to~\eqref{eq:descent-direction}. Thus, the claim follows.
\end{proof}

Given a descent direction $(\DX\opt, \Dg\opt)$ at a feasible point $(X,\dualvar)$, we use a variant of Armijo's rule \cite[Section~3.1]{ref:Nocedal-06} to choose a step size $\alpha>0$ that preserves feasibility of the next iterate $(X+\alpha \DX\opt, \dualvar+\alpha \Dg\opt)$ and ensures a sufficient decrease of the objective function. Specifically, for a prescribed line search parameter $\sigma\in(0,\frac{1}{2})$, we set the step size $\alpha$ to the largest number in $\{ \frac{1}{2^m}\}_{m\in\mathbb Z_+}$ satisfying the following two conditions:
\begin{itemize}
	\item[(C1)] Feasibility: $(\dualvar + \alpha \Dg\opt) I \succ X + \alpha \DX\opt \succ 0$;
	\item[(C2)] Sufficient decrease: $f(X + \alpha \DX\opt, \dualvar + \alpha \Dg\opt) \le f(X, \dualvar) + \sigma \alpha \delta$, where $\delta=g^\top ( \vect(\DX\opt)^\top, \Dg\opt )^\top < 0$, and $g$ is defined as in Propostion~\ref{prop:DXDl:optimal}.
\end{itemize}
Notice that the sparsity constraints are automatically satisfied at the next iterate thanks to the construction of the descent direction $(\DX\opt, \Dg\opt)$ in \eqref{eq:descent-direction}. Algorithm~\ref{algo:QuadraticApprox} repeats the procedure outlined above until $\|g\|$ drops below a given tolerance ($10^{-3}$) or until the iteration count exceeds a given threshold ($10^2$). Throughout the numerical experiments in Section~\ref{sec:num-res} we set $\sigma = 10^{-4}$, which is the value recommended in~\cite{ref:Nocedal-06}.

\begin{algorithm}[h]
	\caption{Sequential quadratic approximation algorithm}
	\KwData{Sample covariance matrix $\covsa \succ 0$, Wasserstein radius $\rho > 0$, line search parameter $\sigma\in(0,\frac{1}{2})$.}
	Initialize $X_0 = I$ and $\dualvar_0 >1$, and set $t \leftarrow 0$\;
	\While{stopping criterion is violated} {
		Find the descent direction $(\DX\opt, \Dg\opt)$ at $(X,\dualvar)=(X_t,\dualvar_t)$ by solving \eqref{eq:descent-direction};\\
		Find the largest step size $\alpha_t\in\{ \frac{1}{2^m}\}_{m\in\mathbb Z_+}$ satisfying (C1) and (C2);\\
		Set $X_{t+1} = X_t + \alpha_t \DX\opt$, $\dualvar_{t+1} = \dualvar_t + \alpha_t \Dg\opt$;\\
		Set $t \leftarrow t + 1$;
	}
	\label{algo:QuadraticApprox}
\end{algorithm}

\begin{remark}[Steepest descent algorithm] The computation of the descent direction in Proposition~\ref{prop:DXDl:optimal} requires second-order information. It is easy to verify that Proposition~\ref{prop:DXDl:optimal} remains valid if the Hessian $H$ is replaced with the identity matrix, in which case the sequential quadratic approximation algorithm reduces to the classical steepest descent algorithm \cite[Chapter~3]{ref:Nocedal-06}.
\end{remark}

The next proposition establishes that Algorithm~\ref{algo:QuadraticApprox} converges to the unique minimizer of problem~\eqref{eq:Reformulation2}. 

\begin{proposition}[Convergence]
	\label{prop:convergence}
	Assume that $\covsa\succ 0$, $\rho>0$ and $\sigma\in(0,\frac{1}{2})$. For any initial feasible solution $(X_0, \dualvar_0)$, the sequence~$\big\{(X_t, \dualvar_t)\big\}_{t \in \mbb Z_+}$ generated by Algorithm~\ref{algo:QuadraticApprox} converges to the unique minimizer $(X\opt, \dualvar\opt)$ of problem~\eqref{eq:Reformulation2}.
	Moreover, the sequence converges locally quadratically.
\end{proposition}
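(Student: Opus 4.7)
The strategy is to cast the algorithm as a damped projected Newton method for minimising the strictly convex function $f(X,\dualvar)$ on the convex set $\mathcal{F} \Let \{(X,\dualvar)\in\X\times\R : \dualvar I\succ X\succ 0\}$, and then invoke the classical convergence theory. I would proceed in three stages.

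First, I would establish structural properties of $f$ and the iterates. The objective $f$ is strictly convex on $\mathcal{F}$ because $-\log\det X$ is strictly convex on $\mathbb{S}^p_{++}$ and the matrix-fractional term $\dualvar^2\langle(\dualvar I-X)^{-1},\covsa\rangle$ is convex on its domain. Moreover, because $\covsa\succ 0$, $f(X,\dualvar)\to+\infty$ whenever $X$ approaches $\partial \mathbb{S}^p_+$, whenever $\dualvar I - X$ becomes singular, or whenever $\dualvar\to\infty$. Consequently, the sub-level set $\{(X,\dualvar)\in\mathcal{F}: f(X,\dualvar)\le f(X_0,\dualvar_0)\}$ is nonempty, compact and strictly contained in the interior of $\mathcal{F}$, which also guarantees existence and uniqueness of the minimiser $(X\opt,\dualvar\opt)$. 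The monotone decrease enforced by condition (C2) ensures that every iterate $(X_t,\dualvar_t)$ stays inside this compact sub-level set, so the Hessian $H_t$ computed in Proposition~\ref{prop:DXDl:optimal} remains uniformly positive definite and Lipschitz continuous along the iterates.

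Second, for global convergence, I would show that Armijo's rule terminates in a finite number of backtracking steps. Both conditions (C1) and (C2) are open: the feasibility requirement holds automatically for $\alpha$ sufficiently small by continuity of the eigenvalues and openness of $\mathcal{F}$, while the sufficient-decrease condition holds for small $\alpha$ because the directional derivative along the Newton direction equals the strictly negative quantity $\delta$. Thus the iteration is well-defined. Using uniform positive definiteness of $H_t$ on the sub-level set, the Newton step satisfies $\delta_t \le -c\,\|P g_t\|^2$ for some $c>0$, where $Pg_t$ is the projection of the gradient onto the linear constraint subspace $\mathcal Z$. Combined with telescoping of (C2) and the fact that $f$ is bounded below on the compact sub-level set, this yields $\sum_t \alpha_t \|Pg_t\|^2 < \infty$. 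A standard Zoutendijk-type argument, together with a uniform lower bound on the accepted step sizes (derived from Lipschitz continuity of $\nabla f$ on the compact sub-level set), then forces $Pg_t\to 0$. Any accumulation point of $\{(X_t,\dualvar_t)\}$ must therefore satisfy the Karush--Kuhn--Tucker conditions of~\eqref{eq:Reformulation2}; by strict convexity this point is $(X\opt,\dualvar\opt)$, and compactness of the sub-level set upgrades subsequential to full-sequence convergence.

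Third, for local quadratic convergence, I would invoke the standard analysis of Newton's method on equality-constrained strongly convex problems. Near $(X\opt,\dualvar\opt)$ the reduced Hessian $PHP$ restricted to $\mathcal Z$ is uniformly positive definite and Lipschitz in $(X,\dualvar)$, so the pure Newton update $(\DX\opt,\Dg\opt)$ satisfies the standard error recursion $\|(X_{t+1}-X\opt,\dualvar_{t+1}-\dualvar\opt)\| = O(\|(X_t-X\opt,\dualvar_t-\dualvar\opt)\|^2)$ whenever $\alpha_t=1$. It therefore remains to prove that unit steps are eventually accepted by~(C1)--(C2). Feasibility (C1) holds for large $t$ because the Newton update stays in the open set $\mathcal{F}$ once the error is small. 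For (C2), a second-order Taylor expansion of $f$ along the Newton direction yields $f(X_t+\DX\opt,\dualvar_t+\Dg\opt)-f(X_t,\dualvar_t)=\tfrac12\delta_t+o(\delta_t)$, which is bounded above by $\sigma\delta_t$ as soon as $\sigma<\tfrac12$, by assumption. Hence $\alpha_t=1$ for $t$ large, and local quadratic convergence follows.

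The main technical obstacle is the simultaneous treatment of the positive-definiteness constraints $X\succ 0$ and $\dualvar I-X\succ 0$, which make $\mathcal{F}$ open and delicate to navigate; the crucial observation that unlocks the analysis is that $f$ acts as a barrier on $\partial\mathcal{F}$, so the monotone decrease of $f(X_t,\dualvar_t)$ confines the iterates to a compact subset of the interior, where all the standard quantitative estimates (Lipschitz Hessian, uniform positive definiteness, uniform step-size lower bound) apply uniformly.
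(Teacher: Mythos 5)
Your proposal is correct and follows the same structural backbone as the paper's proof: both establish that the objective acts as a barrier so that the initial sub-level set is compact and contained in the interior of the feasible region, and both use this compactness to obtain uniform bounds on the Hessian, the descent direction, and the accepted step size. The difference is that once this structural picture is in place, the paper delegates the remainder to citations --- \cite[Lemma~5(b) and Theorem~1]{ref:Tseng-2009} for the uniform step-size bound and global convergence, and \cite[Theorem~16]{ref:Hsieh-2014} for local quadratic convergence --- whereas you write out the classical Zoutendijk-type argument (telescoping Armijo decrease plus $\delta_t\le -c\|Pg_t\|^2$ yields $Pg_t\to 0$) and the Dennis--Mor\'e-style argument (second-order Taylor expansion along the Newton direction shows unit steps eventually satisfy (C2) precisely because $\sigma<\tfrac12$). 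Your version is thus more self-contained and explains \emph{why} $\sigma<\tfrac12$ is assumed, which the paper leaves implicit in the citation; the paper's version is shorter and leans on established results for projected/coordinate Newton methods. One small point worth tightening: your uniform lower bound on the accepted step size is derived from the Lipschitz continuity of $\nabla f$, which handles (C2), but a uniform bound for the feasibility test (C1) also requires the uniform bounds on $(\DX\opt,\Dg\opt)$ and the uniform cushion $I-\dualvar^{-1}X\succ c_0 I$ that the paper constructs explicitly; you mention the ingredients but should combine them explicitly as the paper does to make the argument airtight.
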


\begin{proof}
Denote by $f(X,\dualvar)$ the objective function of problem~\eqref{eq:Reformulation2}, and define
\[
	\mc C \Let \big\{ (X, \dualvar) \in \X \times \R_{+} : f(X, \dualvar) \leq f(X_0, \dualvar_0),~ 0 \prec X \prec \dualvar I\big\} 
\]
as the set of all feasible solutions that are at least as good as the initial solution $(X_0,\dualvar_0)$. The proof of Theorem~\ref{thm:refor} implies that $\underline{x} I \preceq X \preceq \overline{x} I$ and $\underline{x} \le \dualvar \le \overline{x}$ for all $(X, \dualvar) \in \mc C$, where the strictly positive constants $\underline{x}$ and $\overline{x}$ are defined as in \eqref{eq:bound:def}. Note that, as $\covsa$ is fixed in this proof, the dependence of $\underline{x}$ and $\overline{x}$ on $\covsa$ is notationally suppressed to avoid clutter. Thus, $\mc C$ is bounded. Moreover, as $\covsa\succ 0$, it is easy to verify $f(X,\dualvar)$ tends to infinity if the smallest eigenvalue of $X$ approaches 0 or if the largest eigenvalue of $X$ approaches $\gamma$. The continuity of $f(X,\dualvar)$ then implies that $\mc C$ is closed. In summary, we conclude that $\mc C$ is compact.


By the definition of $f(X,\dualvar)$ in~\eqref{eq:Reformulation2}, any $(X, \dualvar) \in \mc C$ satisfies
\begin{align*}
	0& \le f(X_0,\dualvar_0)+\log\det(X)-\dualvar \left(\rho^2-\Tr{\covsa}\right)-\dualvar \inner{( I - \dualvar^{-1} X)^{-1}}{\covsa} \\
	& \le f(X_0,\dualvar_0)+p\log(\overline x)+ \overline x \Tr{\covsa}-\underline x \, \eigval_{\min} \Tr{( I - \dualvar^{-1} X)^{-1}},
\end{align*}
where $\eigval_{\min}$ denotes the smallest eigenvalue of $\covsa$, which is positive by assumption. Thus, we have
\begin{align*}
	\Tr{( I - \dualvar^{-1} X)^{-1}} \le \frac{1}{\underline x \, \eigval_{\min}}\left(f(X_0,\dualvar_0)+p\log(\overline x)+ \overline x \Tr{\covsa}\right),
\end{align*}
which implies that the eigenvalues of $I-\frac{X}{\dualvar}$ are uniformly bounded away from 0 on $\mc C$. More formally, there exists $c_0>0$ with $I-\frac{X}{\dualvar}\succ c_0 I$ for all $(X,\dualvar)\in\mc C$. As the objective function $f(X,\dualvar)$ is smooth wherever it is defined, its gradient and Hessian constitute continuous functions on $\mc C$. Moreover, as $f(X,\dualvar)$ is strictly convex on the compact set~$\mc C$, the eigenvalues of its Hessian matrix are uniformly bounded away from 0. This implies that the inverse Hessian matrix and the descent direction $(\DX\opt,\Dg\opt)$ constructed in Proposition~\ref{prop:DXDl:optimal} are also continuous on~$\mc C$. Hence, there exist $c_1, c_2>0$ such that $\DX\opt \preceq c_1 I$ and $| \Dg\opt| \leq c_2$ uniformly on~$\mc C$. 

We conclude that any positive step size $\alpha<\underline{x} \,\min\left\{ c_1^{-1}, (c_1 + c_2)^{-1} c_0 \right\}$ satisfies the feasibility condition~(C1) uniformly on $\mc C$ because $X + \alpha  \DX\opt  \succ \big(\underline{x} - \alpha c_1\big) I \succeq 0$ and
\begin{align*}
	(\gamma + \alpha \Dg\opt) I & \succeq X + c_0 \underline{x} I+ \alpha \big(\DX\opt - \DX\opt + \Dg\opt I \big) \succeq X + c_0 \underline{x} I + \alpha\big( \DX\opt -(c_1+c_2)I \big) \succ X + \alpha  \DX\opt
\end{align*}
for all $(X,\dualvar)\in\mc C$. 
Moreover, by \cite[Lemma~5(b)]{ref:Tseng-2009} there exists $\overline\alpha>0$ such that any positive step size $\alpha \le \overline \alpha$ satisfies the descent condition~(C2) for all $(X,\dualvar)\in\mc C$. In summary, there exists $m\opt\in\mbb Z_+$ such that 
\[
	\alpha\opt=\frac{1}{2^{m\opt}}<\min\left\{ \overline \alpha, \underline{x} \,\min\left\{ c_1^{-1}, (c_1 + c_2)^{-1} c_0 \right\} \right\}
\]
satisfies both line search conditions~(C1) and~(C2) uniformly on $\mc C$. By induction, the iterates $\{ (X_t, \dualvar_t)\}_{t \in \mbb N}$ generated by Algorithm~\ref{algo:QuadraticApprox} have nonincreasing objective values and thus all belong to $\mc C$, while the step sizes $\{ \alpha_t\}_{t \in \mbb N}$ generated by Algorithm~\ref{algo:QuadraticApprox} are all larger or equal to $\alpha\opt$. Hence, the algorithm's global convergence is guaranteed by \cite[Theorem 1]{ref:Tseng-2009}, while the local quadratic convergence follows from \cite[Theorem~16]{ref:Hsieh-2014}.
\end{proof}

\begin{remark}[Refinements of Algorithm~\ref{algo:QuadraticApprox}]
For large values of $p$, computing and storing the exact Hessian matrix $H$ from Proposition~\ref{prop:DXDl:optimal} is prohibitive. In this case, $H$ can be approximated by a low-rank matrix as in the limited-memory Broyden-Fletcher-Goldfarb-Shanno (BFGS) method without sacrificing global convergence~\cite{ref:Tseng-2009}. Alternatively, one can resort to a coordinate descent method akin to the QUIC algorithm \cite{ref:Hsieh-2014}, in which case both the global and local convergence guarantees of Proposition~\ref{prop:convergence} remain valid.
\end{remark}

\section{Extremal Distributions}
\label{sec:wc-dist}
It is instructive to characterize the extremal distributions that attain the supremum in~\eqref{eq:g:def} for a given sample covariance matrix $\covsa$ and a fixed candidate estimator $X$. 

\begin{theorem}[Extremal distributions]
	\label{thm:worst:case}
	For any $\covsa, X \in \PD^p$, the supremum in~\eqref{eq:g:def} is attained by the normal distribution $\Q\opt= \mc N(0,S\opt)$ with covariance matrix
	\[
	S\opt = (\estdual)^2  (\estdual I - X)^{-1} \covsa (\estdual I - X)^{-1},
	\]
	where $\estdual$ is the unique solution with $\estdual I \succ X$ of the following algebraic equation
	\be
	\label{eq:g:FOC}
	\rho^2 - \Tr{\covsa} + 2 \estdual \Tr {(\estdual I - X)^{-1} \covsa}  - (\estdual)^2 \Tr {(\estdual I - X)^{-1} \covsa (\estdual I - X)^{-1}} = 0\,.
	\ee
\end{theorem}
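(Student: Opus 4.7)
The plan is to reverse-engineer the extremal distribution from the Lagrangian duality analysis already developed in the proof of Proposition~\ref{prop:g-refor}. Via the bijection $\Q=\mc N(0,S)\leftrightarrow S$ between $\Ambi^0$ and $\{S\in\PSD^p : \V(S,\covsa)\le\rho\}$, attainment of the supremum in~\eqref{eq:g:def} by $\Q\opt=\mc N(0,S\opt)$ is equivalent to the primal problem
\[
g(\covsa,X) = \Sup{S\in\PSD^p}\left\{\inner{S}{X} : \V(S,\covsa)\le\rho\right\}
\]
attaining its supremum at $S\opt$. Since $\covsa\succ 0$ and $\rho>0$, the feasible set is compact and the objective continuous, so such a maximizer exists.

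First I would establish existence and uniqueness of $\estdual$. Equation~\eqref{eq:g:FOC} is precisely the first-order condition $\phi'(\estdual)=0$ for the dual objective $\phi(\dualvar)\Let \dualvar(\rho^2-\Tr{\covsa}) + \dualvar^2\inner{(\dualvar I-X)^{-1}}{\covsa}$ from~\eqref{eq:g:refor}, obtained by differentiating with respect to $\dualvar$ using $\frac{\dd}{\dd\dualvar}(\dualvar I-X)^{-1}=-(\dualvar I-X)^{-2}$ together with cyclic invariance of the trace. A short computation collapses the second derivative to
\[
\phi''(\dualvar) = 2\Tr{\covsa(\dualvar I-X)^{-3} X^2},
\]
which is strictly positive on $\{\dualvar : \dualvar I\succ X\}$. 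Combined with $\phi'(\dualvar)\to-\infty$ as $\dualvar\downarrow\eigval_{\max}(X)$ and $\phi'(\dualvar)\to\rho^2>0$ as $\dualvar\to\infty$, this yields a unique root via the intermediate value theorem.

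Next I would derive the explicit form of $S\opt$ by unwinding the substitution used inside the proof of Proposition~\ref{prop:g-refor}. With $M=\covsa^{\frac{1}{2}}$, the inner supremum~\eqref{eq:subproblem-B} is uniquely attained at $B\opt = -\estdual\Delta^{-1} = \estdual M(\estdual I-X)^{-1}M$, where $\Delta = M^{-1}(X-\estdual I)M^{-1}\prec 0$. Reverting the change of variables $B=\sqrt{MSM}$ yields
\[
S\opt = M^{-1}(B\opt)^2 M^{-1} = (\estdual)^2(\estdual I-X)^{-1}\covsa(\estdual I-X)^{-1},
\]
which is the advertised covariance. Feasibility and optimality can be verified simultaneously: since $\sqrt{\covsa^{\frac{1}{2}}S\opt\covsa^{\frac{1}{2}}}=B\opt$, one obtains $\Tr{\sqrt{\covsa^{\frac{1}{2}}S\opt\covsa^{\frac{1}{2}}}}=\estdual\Tr{(\estdual I-X)^{-1}\covsa}$, and substituting this identity into $\V^2(S\opt,\covsa)=\Tr{\covsa}+\Tr{S\opt}-2\Tr{\sqrt{\covsa^{\frac{1}{2}}S\opt\covsa^{\frac{1}{2}}}}$ reduces the Wasserstein constraint to exactly~\eqref{eq:g:FOC}; thus $\V(S\opt,\covsa)=\rho$. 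Optimality of $S\opt$ for the primal (and hence of $\Q\opt$ in~\eqref{eq:g:def}) follows because $(S\opt,\estdual)$ constitute a primal-dual pair satisfying the KKT system derived in the proof of Proposition~\ref{prop:g-refor}.

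The main obstacle is the positivity check for $\phi''(\dualvar)=2\Tr{\covsa(\dualvar I-X)^{-3}X^2}$. Because $\covsa$ and $X$ need not commute, the matrix inside the trace is not a visibly PSD quantity, and cyclic invariance alone does not expose its sign. The cleanest workaround is to simultaneously diagonalise $X=U\Lambda U^\top$ with $\Lambda=\diag(\eigval_1,\ldots,\eigval_p)\succ 0$, so that the trace collapses into $\sum_i \eigval_i^2(\dualvar-\eigval_i)^{-3}(U^\top\covsa U)_{ii}$; the summands are then strictly positive because $\covsa\succ 0$ forces $(U^\top\covsa U)_{ii}>0$ and $\dualvar>\eigval_{\max}(X)$ ensures $(\dualvar-\eigval_i)^{-3}>0$.
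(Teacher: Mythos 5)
Your proposal is correct and follows essentially the same route as the paper's proof: recover $S\opt$ by unwinding the substitution $B\leftarrow\sqrt{MSM}$ from the proof of Proposition~\ref{prop:g-refor}, verify feasibility by showing that~\eqref{eq:g:FOC} is precisely the statement $\V(S\opt,\covsa)=\rho$, and conclude optimality from strong duality. The only variations are that you establish strict convexity of the dual objective explicitly (the paper merely asserts it) via the identity $\phi''(\dualvar)=2\Tr{\covsa(\dualvar I-X)^{-3}X^2}>0$, and that you phrase the optimality step as a KKT/saddle-point verification whereas the paper simply computes $\inner{S\opt}{X}$ and matches it against the dual optimal value.
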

\begin{proof}
	From Proposition~\ref{prop:g-refor} we know that the worst-case expectation problem~\eqref{eq:g:def} is equivalent to the semidefinite program~\eqref{eq:g:refor}. Note that the strictly convex objective function of~\eqref{eq:g:refor} is bounded below by
	\[
		\dualvar \left(\rho^2 - \Tr{\covsa} \right) + \lambda_{\rm min} \dualvar^2 \Tr{(\dualvar I - X)^{-1}},
	\]
	where $\lambda_{\rm min}$ denotes the smallest eigenvalue of $\covsa$. As $\lambda_{\rm min}$ is positive by assumption, the objective function of~\eqref{eq:g:refor} tends to infinity as $\dualvar$ approaches the largest eigenvalue of $X$, in which case $\dualvar I-X$ becomes singular. Thus, the unique optimal solution $\estdual$ of~\eqref{eq:g:refor} satisfies $\estdual I\succ X$ and solves the first-order optimality condition~\eqref{eq:g:FOC}.
	
	Now we are ready to prove that $\Q\opt$ is both feasible and optimal in~\eqref{eq:g:def}. By the formula for $S\opt$ in terms of $\estdual$, $\covsa$ and $S$ and by using Definition~\ref{def:Wass-for-S} and Proposition~\ref{prop:Wass}, it is easy to verify that~\eqref{eq:g:FOC} is equivalent to
	\[
	\Tr{S\opt} + \Tr{\covsa} - 2\Tr{ \sqrt{\covsa^\half S\opt \covsa^\half} } = \rho^2 \quad \iff \quad \V(S\opt, \covsa) =\Wass(\Q\opt, \wh{\mbb P}) = \rho,
	\]
	which confirms that $\Q\opt$ is feasible in~\eqref{eq:g:def}. Moreover, the objective value of $\Q\opt$ in~\eqref{eq:g:def} amounts to
	\begin{align*}
			\EE^{\Q\opt}[ \inner{\xi \xi^\top}{X}] &= \inner{S\opt}{X} = (\estdual)^2 \inner{(\estdual I - X)^{-1} \covsa (\estdual I - X)^{-1} }{X} \\
			&= (\estdual)^2 \inner{(\estdual I - X)^{-1} \covsa (\estdual I - X)^{-1} }{(X - \estdual I) + \estdual I} \\
			&= -(\estdual)^2 \Tr{(\estdual I - X)^{-1}\covsa} + (\estdual)^3 \Tr{(\estdual I - X)^{-1} \covsa (\estdual I - X)^{-1}} \\
			&=\estdual (\rho^2 - \Tr{\covsa}) + (\estdual)^2 \inner{(\estdual I - X)^{-1}}{\covsa} = g(\covsa,S\opt),
	\end{align*}
	where the penultimate equality exploits~\eqref{eq:g:FOC}, while the last equality follows from the optimality of $S\opt$ in~\eqref{eq:g:refor} and from Proposition~\ref{prop:g-refor}. Thus, $\Q\opt$ is optimal in~\eqref{eq:g:def}.
\end{proof}

In the absence of sparsity information (that is, if $\X=\PD^p$), the unique minimizer $X\opt$ of problem~\eqref{eq:DROSimplified} is available in closed form thanks to Theorem~\ref{thm:main:theorem}. In this case, the extremal distribution attaining the supremum in~\eqref{eq:g:def} at $X=X\opt$ can also be computed in closed form even if $\covsa$ is rank deficient.

\begin{corollary}[Extremal distribution for optimal estimator]
	\label{corollary:worst:case:est}
	Assume that $\rho > 0$, $\X = \PD^p$ and $\covsa\in\PSD^p$ admits the spectral decomposition $\covsa = \sum_{i=1}^p \covsaeig_i \covsaeigvect_i \covsaeigvect_i^\top$ with eigenvalues $\covsaeig_i$ and corresponding orthonormal eigenvectors~$\covsaeigvect_i$, $i\le p$.  If $(\est, \estdual)$ represents the unique solution of~\eqref{eq:Reformulation2} given in Theorem~\ref{thm:main:theorem}, then the supremum in~\eqref{eq:g:def} at $X=X\opt$ is attained by the normal distribution $\Q\opt= \mc N(0,S\opt)$ with covariance matrix 
	\[
		S\opt =\sum_{i=1}^p s\opt_i \covsaeigvect_i \covsaeigvect_i^\top, \quad \text{where} \quad s\opt_i = \begin{cases} (\estdual)^2 \eigval_i (\estdual - \estx_i)^{-2} & \text{if } \eigval_i > 0, \\
		(\estdual)^{-1} & \text{if } \eigval_i = 0.
	\end{cases}
	\]
\end{corollary}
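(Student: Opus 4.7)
The plan is to split the argument into two cases according to whether $\covsa$ is invertible or rank deficient, since Theorem~\ref{thm:worst:case} applies directly only when $\covsa\in\PD^p$.

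In the non-degenerate case $\covsa\succ 0$, I would invoke Theorem~\ref{thm:worst:case} with the estimator $X=\est$ from Theorem~\ref{thm:main:theorem}. The task then reduces to showing that the matrix $(\estdual)^2(\estdual I-\est)^{-1}\covsa(\estdual I-\est)^{-1}$ prescribed there agrees with the spectral expression claimed in the corollary, and that the Lagrange multiplier $\estdual$ appearing in Theorem~\ref{thm:worst:case} coincides with the one from Theorem~\ref{thm:main:theorem}. Since $\est$ and $\covsa$ share the orthonormal eigenbasis $\{v_i\}$, both $(\estdual I-\est)^{-1}$ and $\covsa$ are simultaneously diagonalizable, so the product collapses to $\sum_i(\estdual)^2\eigval_i(\estdual-\estx_i)^{-2}v_iv_i^\top$, which is exactly $S\opt$ restricted to the eigenvalues with $\eigval_i>0$ (note $\estx_i=\estdual$ when $\eigval_i=0$, but this contingency does not arise here). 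The compatibility of the two $\estdual$'s follows because the first-order condition \eqref{eq:g:FOC} of Theorem~\ref{thm:worst:case} is exactly $\frac{\partial}{\partial\dualvar}f(\est,\estdual)=0$, which holds at $\estdual$ by the optimality established in the proof of Theorem~\ref{thm:main:theorem}.

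For the degenerate case in which $\covsa$ has one or more zero eigenvalues, I would proceed by a continuity/perturbation argument using the regularized sample covariance $\covsa+\eps I\succ 0$. Denote by $\estdual(\eps)$, $\estx_i(\eps)$, and $S\opt(\eps)$ the quantities constructed for $\covsa+\eps I$; the first-principles case just settled shows that $S\opt(\eps)$ is the extremal covariance matrix at $\est(\eps)$. The continuity of $\estdual(\eps)$ and $\estx_i(\eps)$ in $\eps$, established in the proof of Theorem~\ref{thm:main:theorem}, yields $\estdual(\eps)\to\estdual$ and $\estx_i(\eps)\to\estx_i$. For indices with $\eigval_i>0$, the eigenvalue of $S\opt(\eps)$ converges directly to $(\estdual)^2\eigval_i(\estdual-\estx_i)^{-2}$. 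For indices with $\eigval_i=0$, the identity \eqref{eq:analytical:nice:equation:for:x} for $\covsa+\eps I$ gives $(\eigval_i+\eps)(\estdual(\eps))^2\,\estx_i(\eps)=(\estdual(\eps)-\estx_i(\eps))^2$, so
\[
s\opt_i(\eps)=\frac{(\estdual(\eps))^2(\eigval_i+\eps)}{(\estdual(\eps)-\estx_i(\eps))^2}=\frac{1}{\estx_i(\eps)}\;\longrightarrow\;\frac{1}{\estdual},
\]
where the limit uses $\estx_i(\eps)\to\estdual$ when $\eigval_i=0$ (readable from \eqref{eq:x:value} with $\eigval_i=0$). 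This matches the $(\estdual)^{-1}$ entry in the corollary.

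It remains to certify that the limiting $S\opt$ is indeed extremal in~\eqref{eq:g:def} at $\est$. Feasibility $\V(S\opt,\covsa)=\rho$ is obtained by passing to the limit in $\V(S\opt(\eps),\covsa+\eps I)=\rho$ using the continuity of $\V$; and optimality follows from
\[
\inner{S\opt}{\est}=\lim_{\eps\da 0}\inner{S\opt(\eps)}{\est(\eps)}=\lim_{\eps\da 0}\g(\covsa+\eps I,\est(\eps))=\g(\covsa,\est),
\]
where the last equality uses the joint continuity of $\g$ established in Proposition~\ref{prop:g-refor} together with $\est(\eps)\to\est$. The main technical obstacle is the indeterminate $0/0$ form of the formula at $\eigval_i=0$; the cleanest way around it is precisely to exploit the algebraic identity \eqref{eq:analytical:nice:equation:for:x}, which cancels the vanishing factors and exposes the limit $1/\estdual$ directly rather than through an asymptotic expansion of the square roots in \eqref{eq:x:value}.
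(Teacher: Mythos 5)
Your proposal is correct and follows essentially the same two-stage route as the paper: direct substitution of the Theorem~\ref{thm:main:theorem} solution into the Theorem~\ref{thm:worst:case} formula when $\covsa\succ 0$, and a perturbation/continuity argument via $\covsa+\eps I$ with a final feasibility/optimality check via the continuity of $\V$ and $\g$ when $\covsa$ is singular. The one genuine difference is in how you evaluate the limit $s\opt_i(\eps)\to s\opt_i$ for a zero eigenvalue: the paper substitutes the explicit square-root formula \eqref{eq:x:value} for $\estdual(\eps)-\estx_i(\eps)$ and computes the $0/0$ limit directly, whereas you first invoke the algebraic identity \eqref{eq:analytical:nice:equation:for:x} to collapse $s\opt_i(\eps)$ to $1/\estx_i(\eps)$ and then use $\estx_i(\eps)\to\estdual$ when $\eigval_i=0$. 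Your manipulation is cleaner because it exposes the limit without unwinding the nested radicals, though it is ultimately the same cancellation in disguise. Your explicit verification that the $\estdual$ of Theorem~\ref{thm:worst:case} coincides with the $\estdual$ of Theorem~\ref{thm:main:theorem}, by noting that \eqref{eq:g:FOC} is exactly the stationarity condition $\partial_\gamma f(\est,\estdual)=0$ verified in the proof of Theorem~\ref{thm:main:theorem}, is a point the paper leaves implicit and is worth spelling out as you did.
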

\begin{proof}
If $\covsa\succ 0$, the claim follows immediately by substituting the formula for $\est$ from Theorem~\ref{thm:main:theorem} into the formula for $S\opt$ from Theorem~\ref{thm:worst:case}. If $\covsa\succeq 0$ is rank deficient, we consider the invertible sample covariance matrix $\covsa+\eps I\succ 0$ for some $\eps>0$, denote by $(\est(\eps),\estdual(\eps))$ the corresponding minimizer of problem~\eqref{eq:Reformulation2} as constructed in~\eqref{eq:analytical} and let $S\opt(\eps)$ be the covariance matrix of the extremal distribution of problem~\eqref{eq:g:def} at $X=X\opt(\eps)$. Using the same reasoning as in the proof of Theorem~\ref{thm:main:theorem}, one can show that $(\est(\eps),\estdual(\eps))$ is continuous in $\eps\in\R_+$ and converges to $(\est,\estdual)$ as $\eps$ tends to $0$. Similarly, $S\opt(\eps)$ is continuous in $\eps\in\R_+$ and converges to $S\opt$ as $\eps$ tends to $0$. To see this, note that the eigenvalues $s_i\opt(\eps)$, $i\le p$, of $S\opt(\eps)$ satisfy 
\begin{align*}
	\lim_{\eps\ra0^+} s_i\opt(\eps) & = \lim_{\eps\ra0^+} \frac{\estdual(\eps)^2 (\eigval_i+\eps)}{(\estdual(\eps) - \estx_i(\eps))^2} \\
	& = \lim_{\eps\ra0^+} \frac{4 (\lambda_i+\eps)}{\left( \sqrt{(\lambda_i+\eps)^2 \estdual(\eps)^2+4 (\lambda_i+\eps) \estdual(\eps)} - (\lambda_i+\eps) \estdual(\eps)\right)^2} = s_i\opt \quad \forall i\le p\,,
\end{align*}
where the first equality follows from the first part of the proof, the second equality exploits~\eqref{eq:x:value} and the third equality holds due to the definition of $s\opt_i$.

We are now armed to prove that $\Q\opt$ is both feasible and optimal in~\eqref{eq:g:def}. Indeed, using the continuity of $S\opt (\eps)$ and~$\V(S_1,S_2)$ in their respective arguments, we find
\[
	\Wass(\Q\opt, \wh{\mbb P}) = \V(S\opt, \covsa) = \lim_{\eps\ra0^+} \V(S\opt(\eps), \covsa+\eps I)=\rho\,,
\]
where the last equality follows from the construction of $S\opt(\eps)$ in the proof of Theorem~\ref{thm:worst:case}. Thus, $\Q\opt$ is feasible in~\eqref{eq:g:def}. Similarly, using the continuity of $S\opt(\eps)$ and $X\opt(\eps)$ in $\eps$, we have
\[
	\EE^{\Q\opt}[ \inner{\xi \xi^\top}{X\opt}] = \inner{S\opt}{X\opt} = \lim_{\eps\ra0^+} \inner{S\opt(\eps)}{X\opt(\eps)} = \lim_{\eps\ra0^+} g(\covsa+\eps I,S\opt(\eps)) = g(\covsa,S\opt)\,,
\]
where the last two equalities follow from the construction of $S\opt(\eps)$ in the proof of Theorem~\ref{thm:worst:case} and the continuity of $g(\covsa, X)$ established in Proposition~\ref{prop:g-refor}, respectively. Thus, $\Q\opt$ is optimal in~\eqref{eq:g:def}.
\end{proof}


\section{Numerical Experiments}
\label{sec:num-res}
To assess the statistical and computational properties of the proposed Wasserstein shrinkage estimator, we compare it against two state-of-the-art precision matrix estimators from the literature.

\begin{definition}[Linear shrinkage estimator]
\label{def:lin-shrinkage}
Denote by $\diag(\covsa)$ the diagonal matrix of all sample variances. Then, the linear shrinkage estimator with mixing parameter $\alpha\in[0,1]$ is defined as 
	\[
	\est =  \left[(1 - \alpha) \covsa + \alpha \diag(\covsa)\right]^{-1}.
	\]
\end{definition}

The linear shrinkage estimator uses the diagonal matrix of sample variances as the shrinkage target. Thus, the sample covariances are shrunk to zero, while the sample variances are preserved. We emphasize that the most prevalent shrinkage target is a scaled identity matrix \cite{ref:Ledoit-2004}. The benefits of using $\diag(\covsa)$ instead are discussed in \cite[\S~2.4]{ref:SchaeferStrimmer-2005}. Note that while $\covsa$ is never invertible for $n<p$, $\diag(\covsa)$ is almost surely invertible whenever the true covariance matrix is invertible and $n>1$. Thus, the linear shrinkage estimator is almost surely well-defined for all $\alpha>0$. Moreover, it can be efficiently computed in $\mc O(p^3)$ arithmetic operations.

\begin{definition}[$\ell_1$-Regularized maximum likelihood estimator]
	The $\ell_1$-regularized maximum likelihood estimator with penalty parameter $\beta \ge 0$ is defined as
	\[
	\est = \arg \Min{X \succeq 0}  \left\{- \log \det X + \inner{\covsa}{X} + \beta \sum_{i,j=1}^p |X_{ij}| \right\}.
	\]
\end{definition}

Adding an $\ell_1$-regularization term to the standard maximum likelihood estimation problem gives rise to sparse---and thus interpretable---estimators \cite{ref:Banerjee-2008, ref:Friedman-2008}. The resulting semidefinite program can be solved with general-purpose interior point solvers such as SDPT3 or with structure-exploiting methods such as the QUIC algorithm, which enjoys a quadratic convergence rate and requires $\mc O(p^3)$ arithmetic operations per iteration~\cite{ref:Hsieh-2014}. 
In the remainder of this section we test the Wasserstein shrinkage, linear shrinkage and $\ell_1$-regularized maximum likelihood estimators on synthetic and real datasets. All experiments are implemented in MATLAB, and the corresponding codes are included in the \textbf{W}asserstein \textbf{I}nverse Covariance \textbf{S}hrinkage \textbf{E}stimator (WISE) package available at
 \url{https://www.github.com/nvietanh/wise}.
 
 \begin{remark}[Bessel's correction]
So far we used $\mc N(\wh\mu, \covsa)$ as the nominal distribution, where the sample covariance matrix $\covsa$ was identified with the (biased) maximum likelihood estimator. In practice, it is sometimes useful to use $\covsa/\kappa$ as the nominal covariance matrix, where $\kappa\in(0,1)$ is a Bessel correction that removes the bias; see, e.g., Sections~\ref{sect:LDA} and~\ref{sect:MinVar} below. 
Under the premise that $\X$ is a cone, it is easy to see that if $(\est, \dualvar\opt)$ is optimal in~\eqref{eq:Reformulation1} for a prescribed Wasserstein radius $\rho$ and a scaled sample covariance matrix $\covsa/\kappa$, then $(\kappa \est, \kappa\dualvar\opt)$ is optimal in~\eqref{eq:Reformulation1} for a scaled Wasserstein radius $\sqrt{\kappa}\rho$ and the original sample covariance matrix~$\covsa$. Thus, up to scaling, using a Bessel correction is tantamount to shrinking $\rho$.
\end{remark}

\subsection{Experiments with Synthetic Data}

Consider a $(p=20)$-variate Gaussian random vector $\xi$ with zero mean. The (unknown) true covariance matrix~$\cov_0$ of $\xi$ is constructed as follows. We first choose a density parameter $d\in\{12.5\%, 50\%, 100\%\}$. Using the legacy MATLAB~5.0 uniform generator initialized with seed~0, we then generate a matrix $C \in \mbb R^{p \times p}$ with $\lfloor d\times p^2 \rfloor$ randomly selected nonzero elements, all of which represent independent Bernoulli random variables taking the values $+1$ or $-1$ with equal probabilities. Finally, we set $\cov_0= (C^\top C+10^{-3}I)^{-1}\succ 0$.

As usual, the quality of an estimator $\est$ for the precision matrix $\cov_0^{-1}$ is evaluated using Stein's loss function
\[
	L (\est, \cov_0) = -\log \det ( \est \cov_0) + \inner{\est}{\cov_0} - p, 
\]
which vanishes if $\est=\Sigma_0^{-1}$ and is strictly positive otherwise~\cite{ref:Stein-1961}. 


\begin{figure*} [t]
	\centering
	\subfigure[Wasserstein shrinkage]{\label{fig:dense:W} 
		\includegraphics[width=0.31\columnwidth]{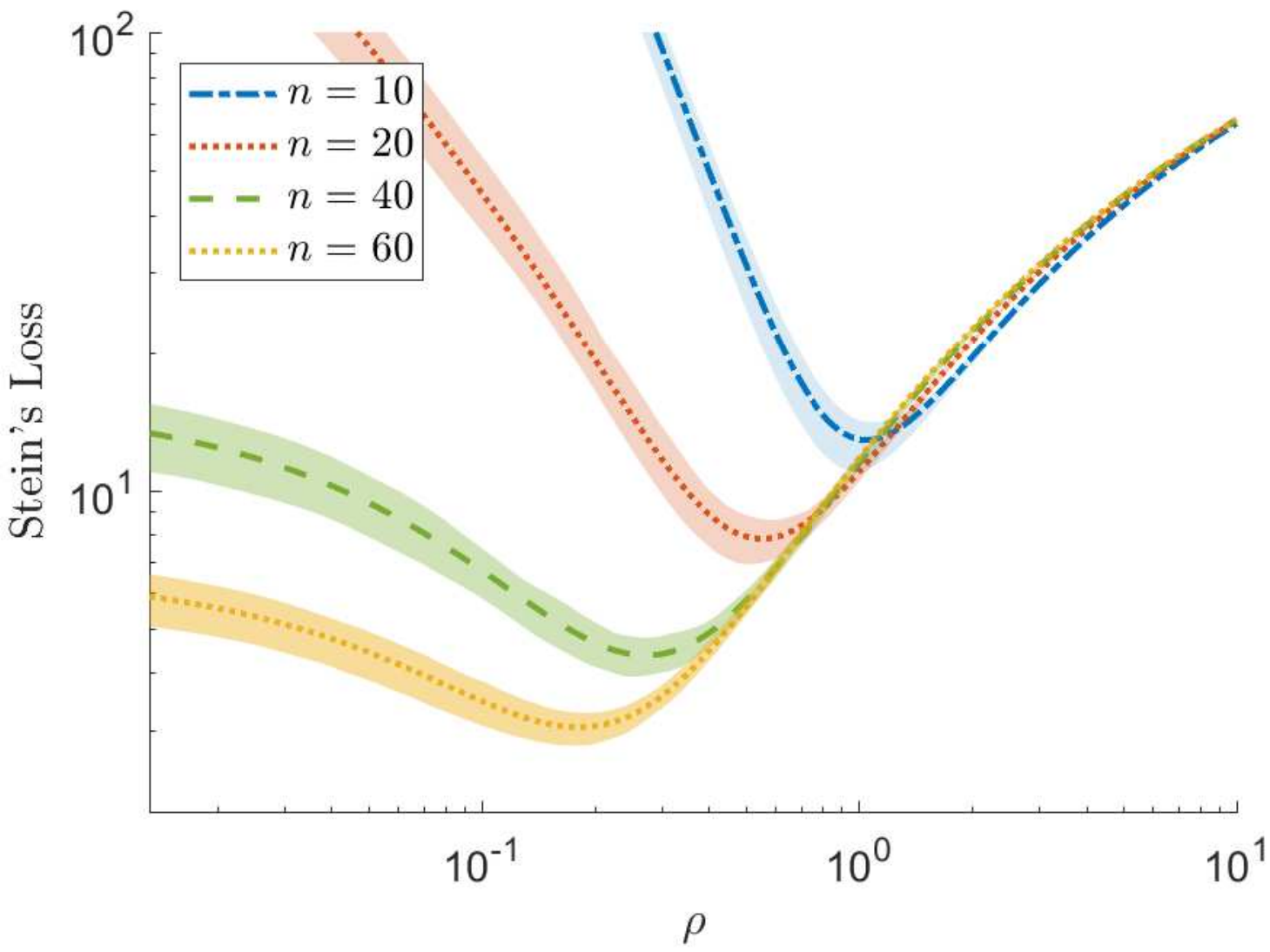}} \hspace{1mm}
	\subfigure[Linear shrinkage]{\label{fig:dense:S}
		\includegraphics[width=0.31\columnwidth]{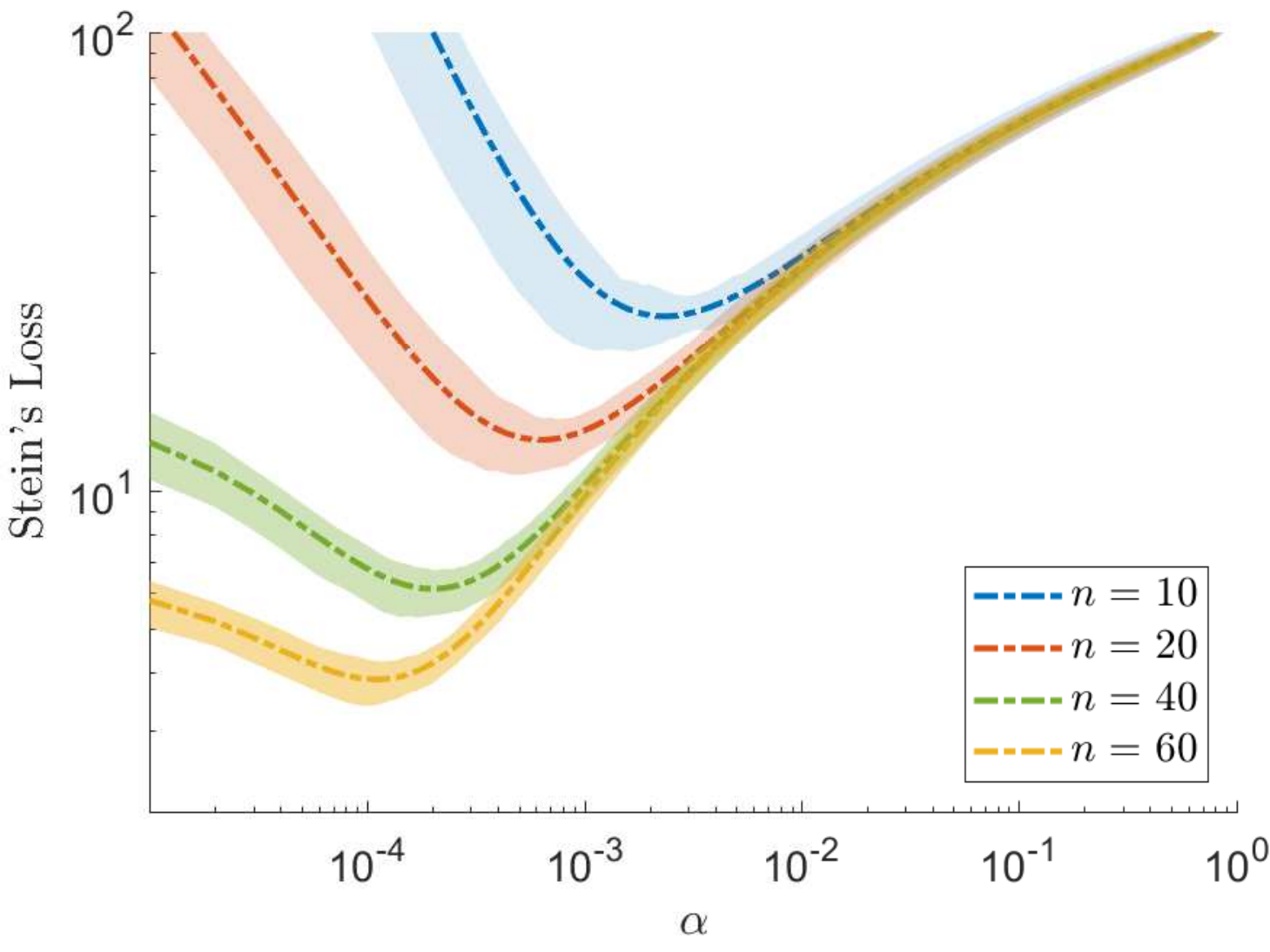}} \hspace{1mm}
	\subfigure[$\ell_1$-regularized ML]{\label{fig:dense:l1}
		\includegraphics[width=0.31\columnwidth]{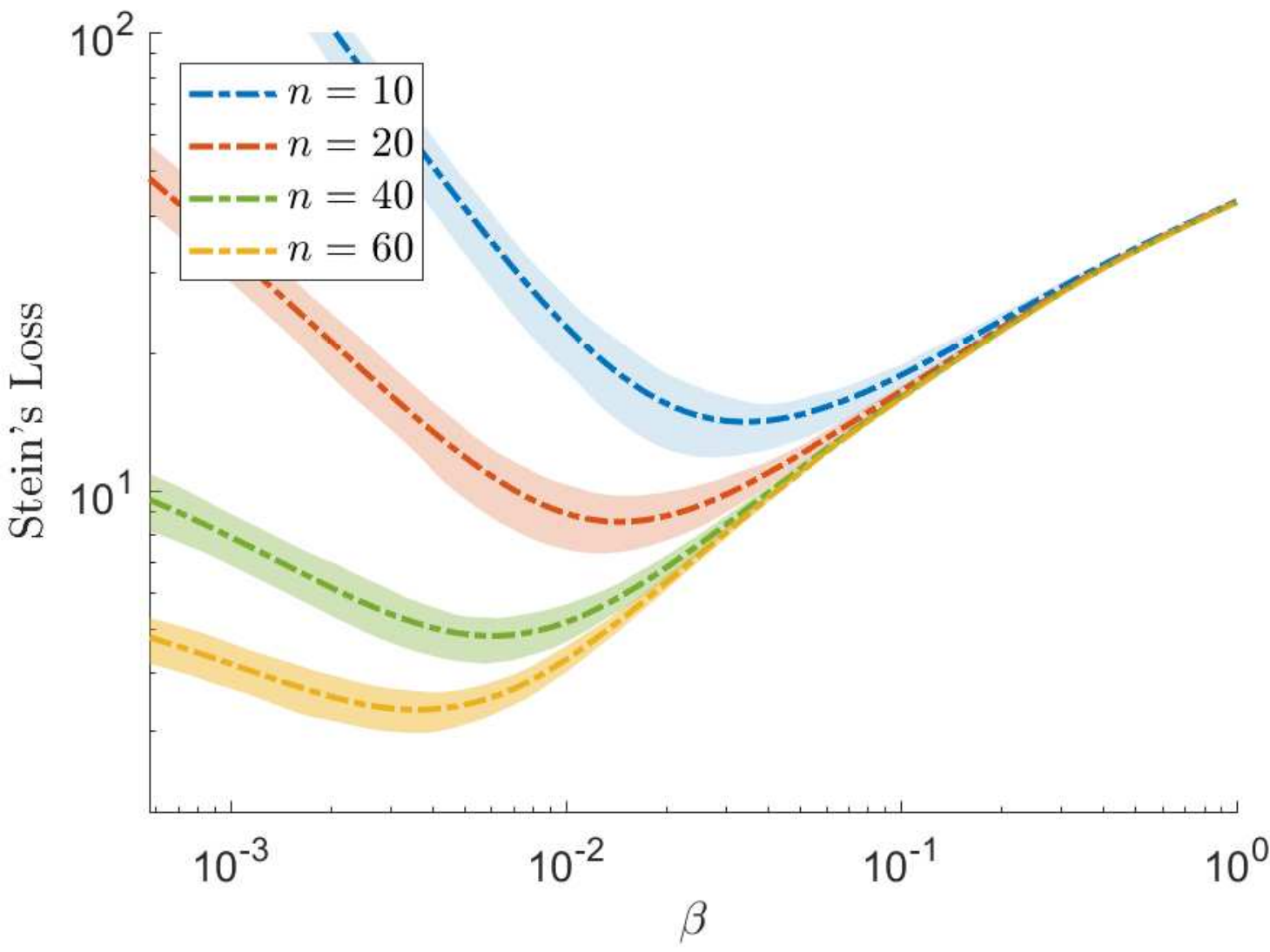}} \hspace{1mm}	
	\subfigure[Wasserstein shrinkage]{\label{fig:d200:W}
		\includegraphics[width=0.31\columnwidth]{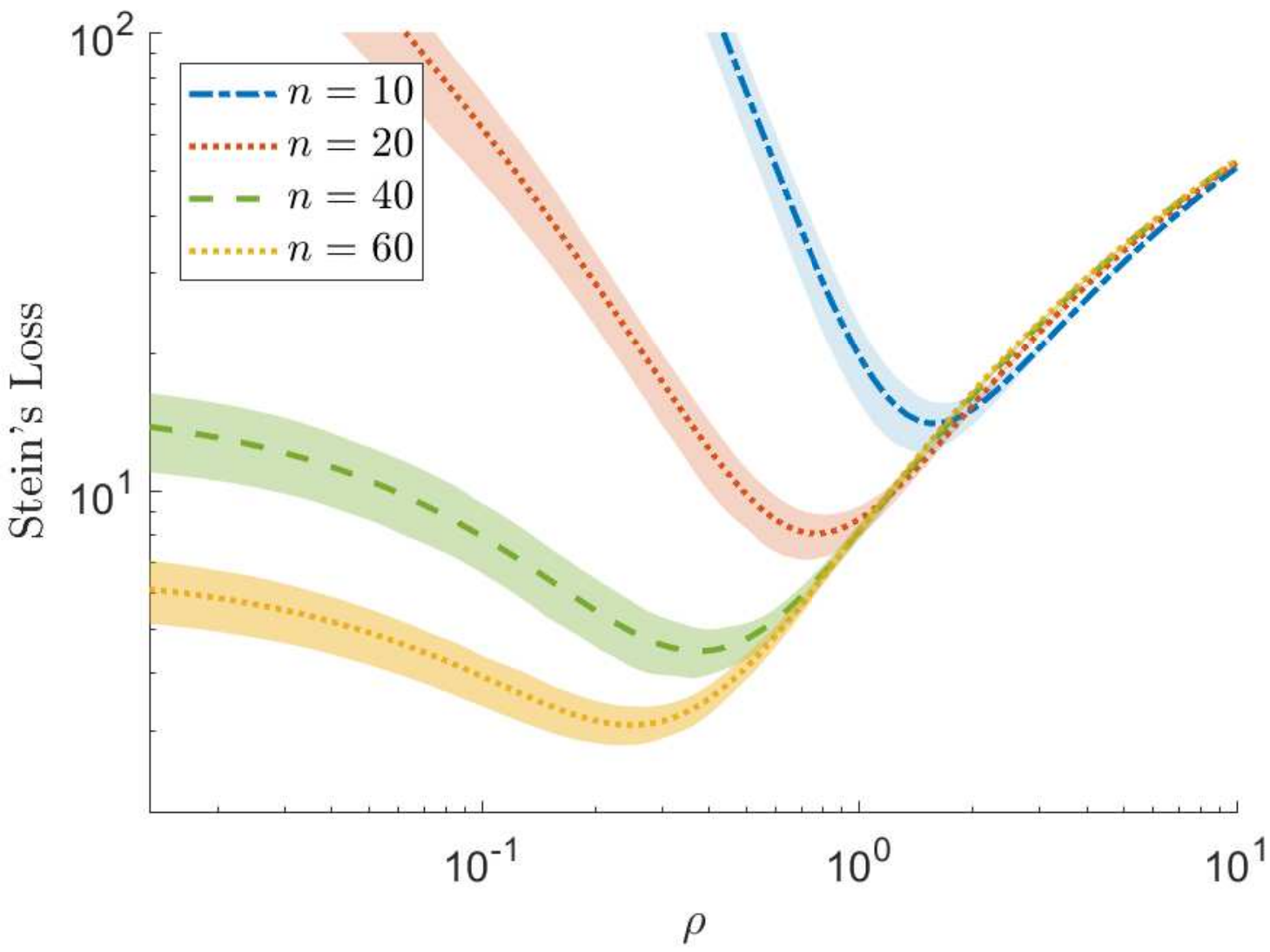}} \hspace{1mm}
	\subfigure[Linear shrinkage]{\label{fig:d200:S} 
		\includegraphics[width=0.31\columnwidth]{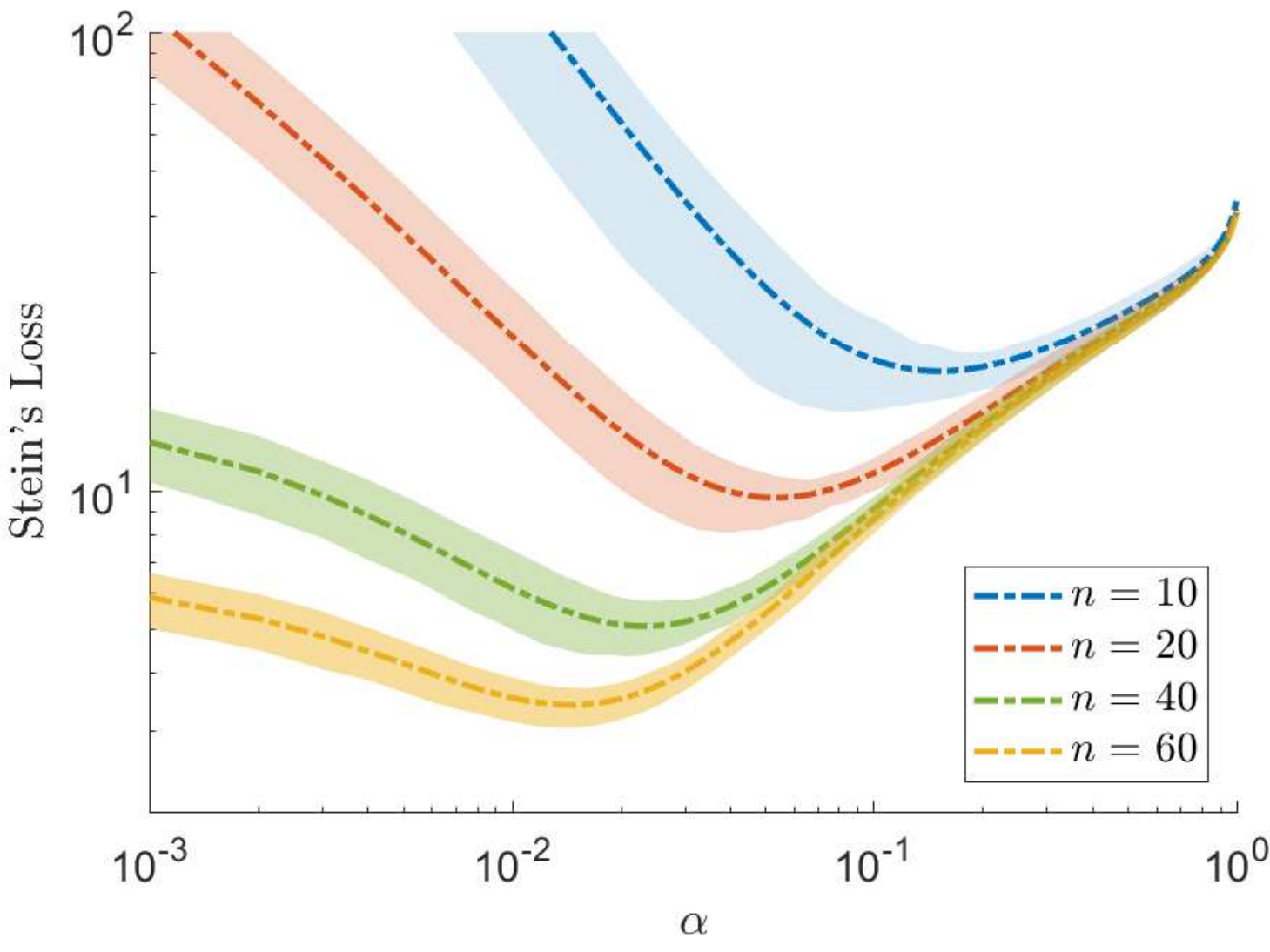}} \hspace{1mm}
	\subfigure[$\ell_1$-regularized ML]{\label{fig:d200:l1} 
		\includegraphics[width=0.31\columnwidth]{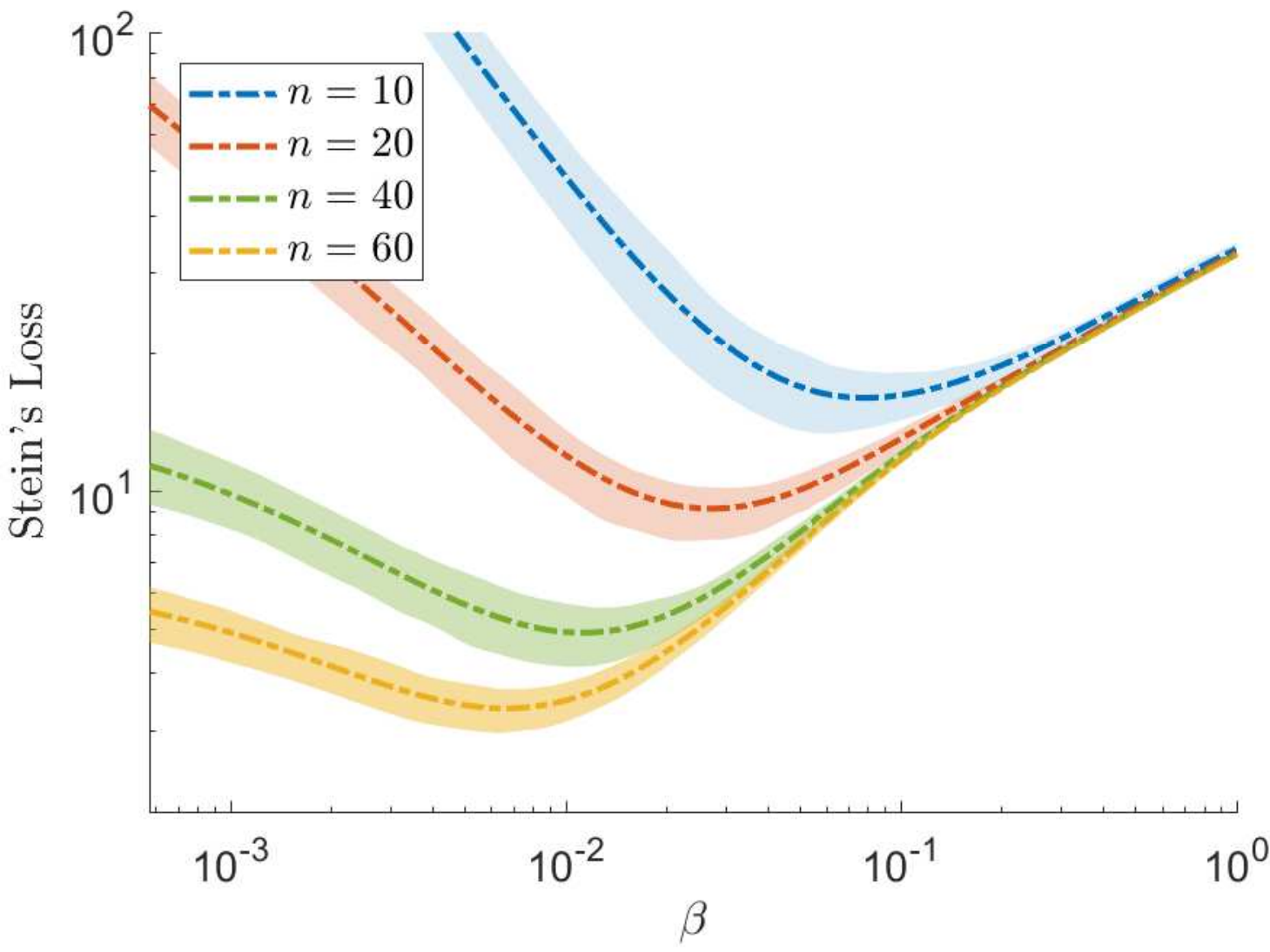}} \hspace{1mm}	
	\subfigure[Wasserstein shrinkage]{\label{fig:d50:W}
		\includegraphics[width=0.31\columnwidth]{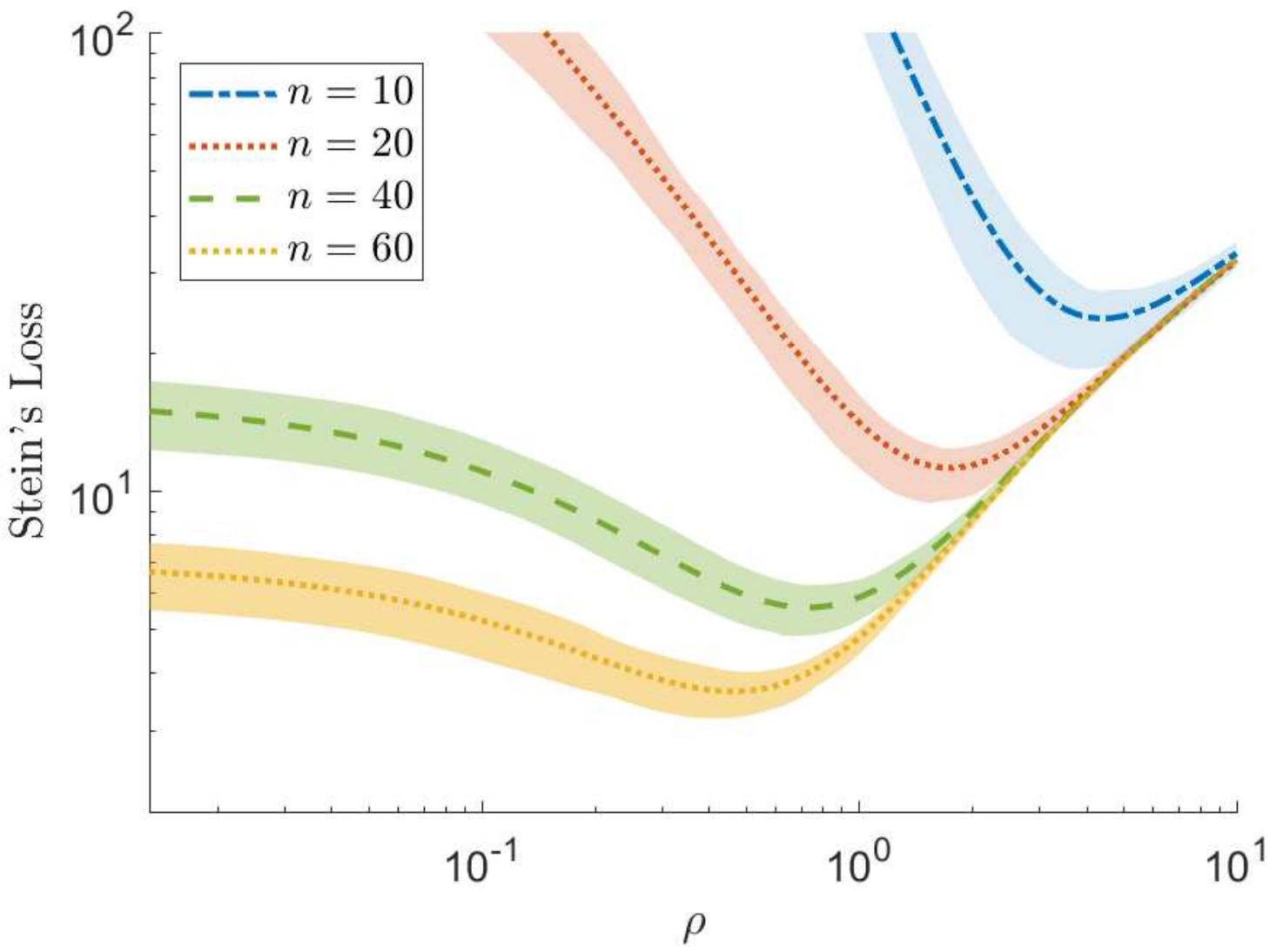}} \hspace{1mm}
	\subfigure[Linear shrinkage]{\label{fig:d50:S}
		\includegraphics[width=0.31\columnwidth]{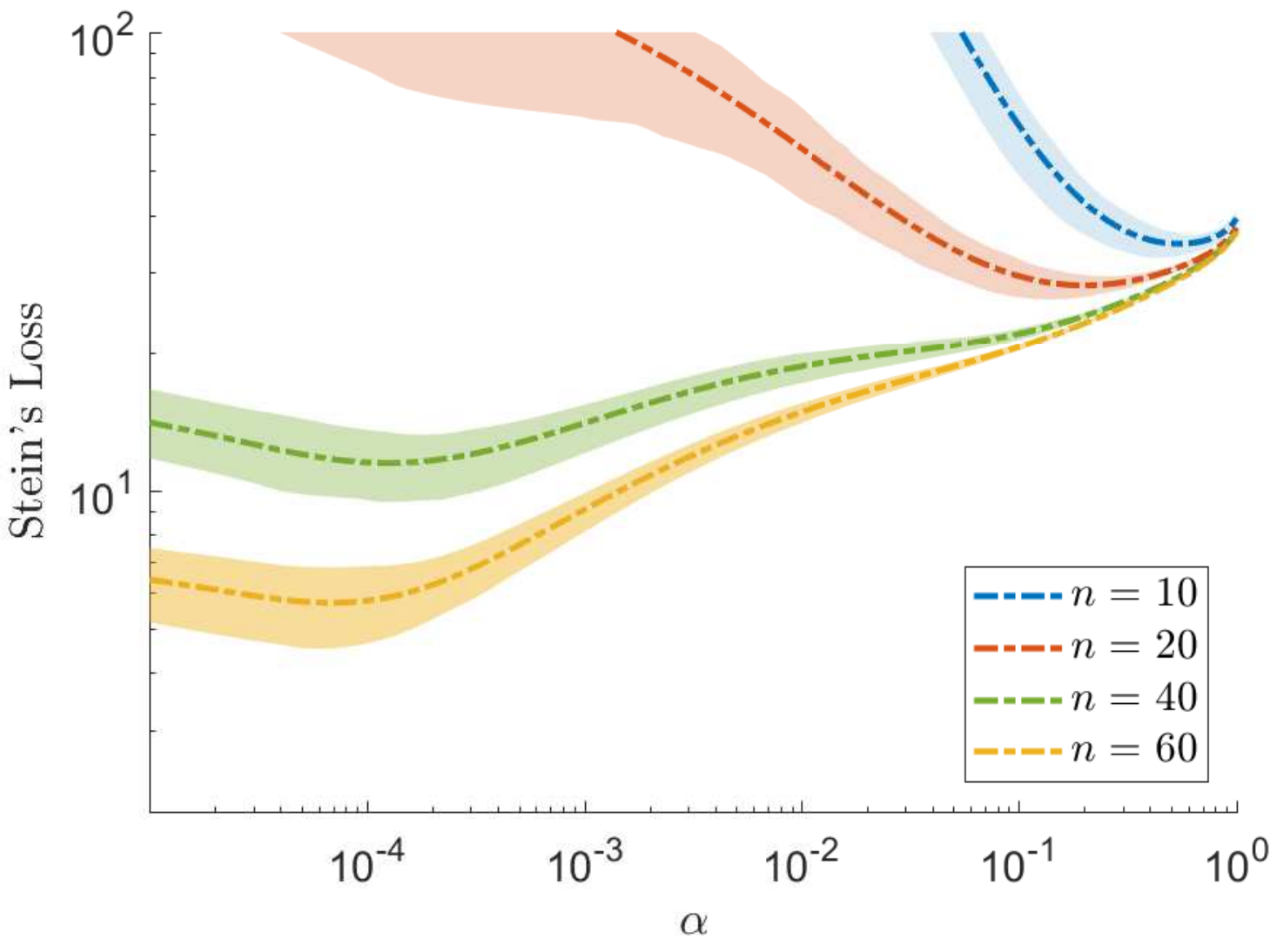}} \hspace{1mm}
	\subfigure[$\ell_1$-regularized ML]{\label{fig:d50:l1}
		\includegraphics[width=0.31\columnwidth]{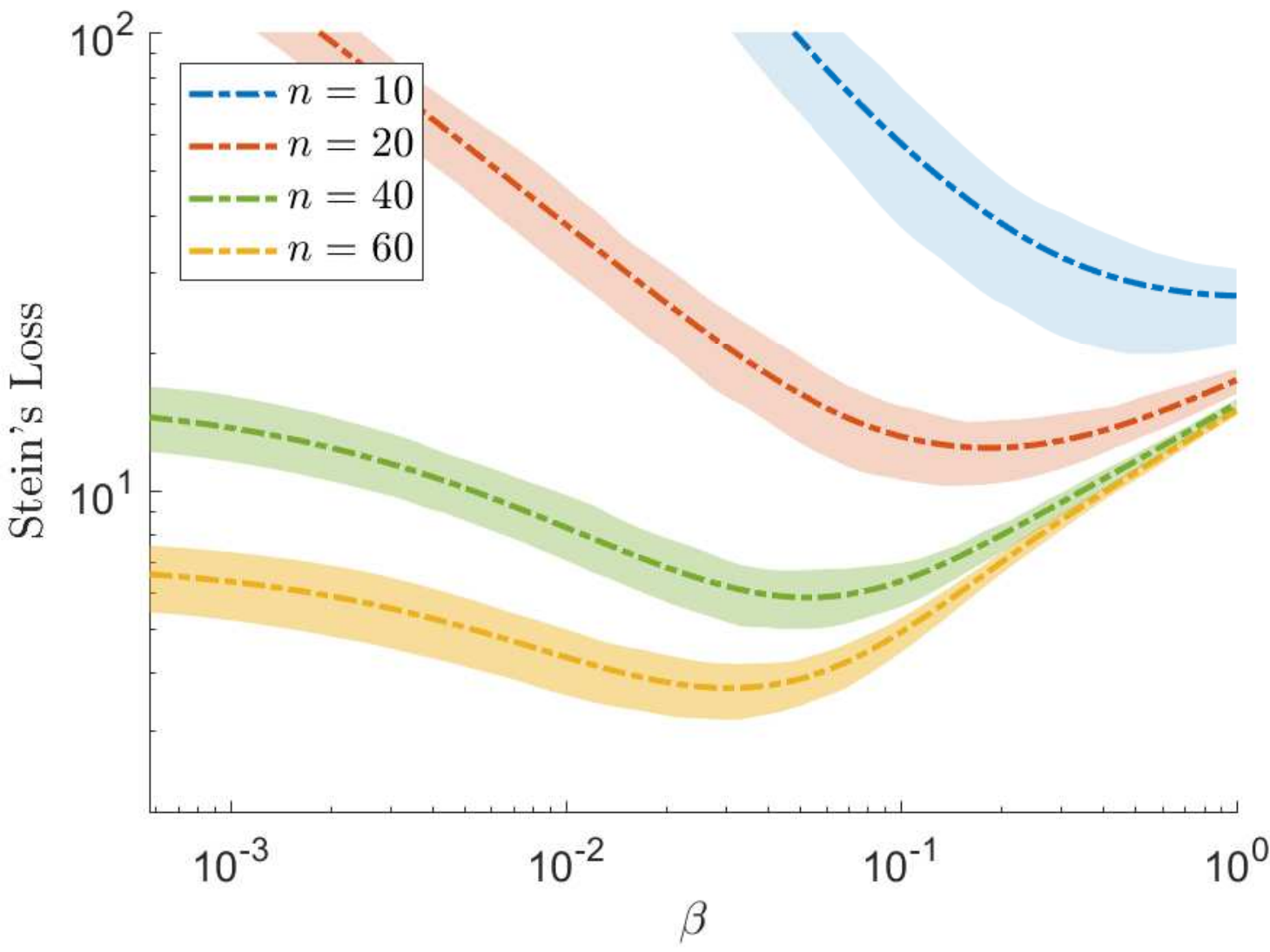}} 
	\caption{Stein's loss of the Wasserstein shrinkage, linear shrinkage and $\ell_1$-regularized maximum likelihood estimators as a function of their respective tuning parameters for $d=100\%$ (panels \ref{fig:dense:W}--\ref{fig:dense:l1}), $d=50\%$ (panels \ref{fig:d200:W}--\ref{fig:d200:l1}) and $d=12.5\%$ (panels \ref{fig:d50:W}--\ref{fig:d50:l1}).}
	\label{fig:perf:synthetic}
\end{figure*}

\begin{figure*} [t]
	\centering
	\subfigure[50\% sparsity information]{\label{fig:d200:W50} 
		\includegraphics[width=0.31\columnwidth]{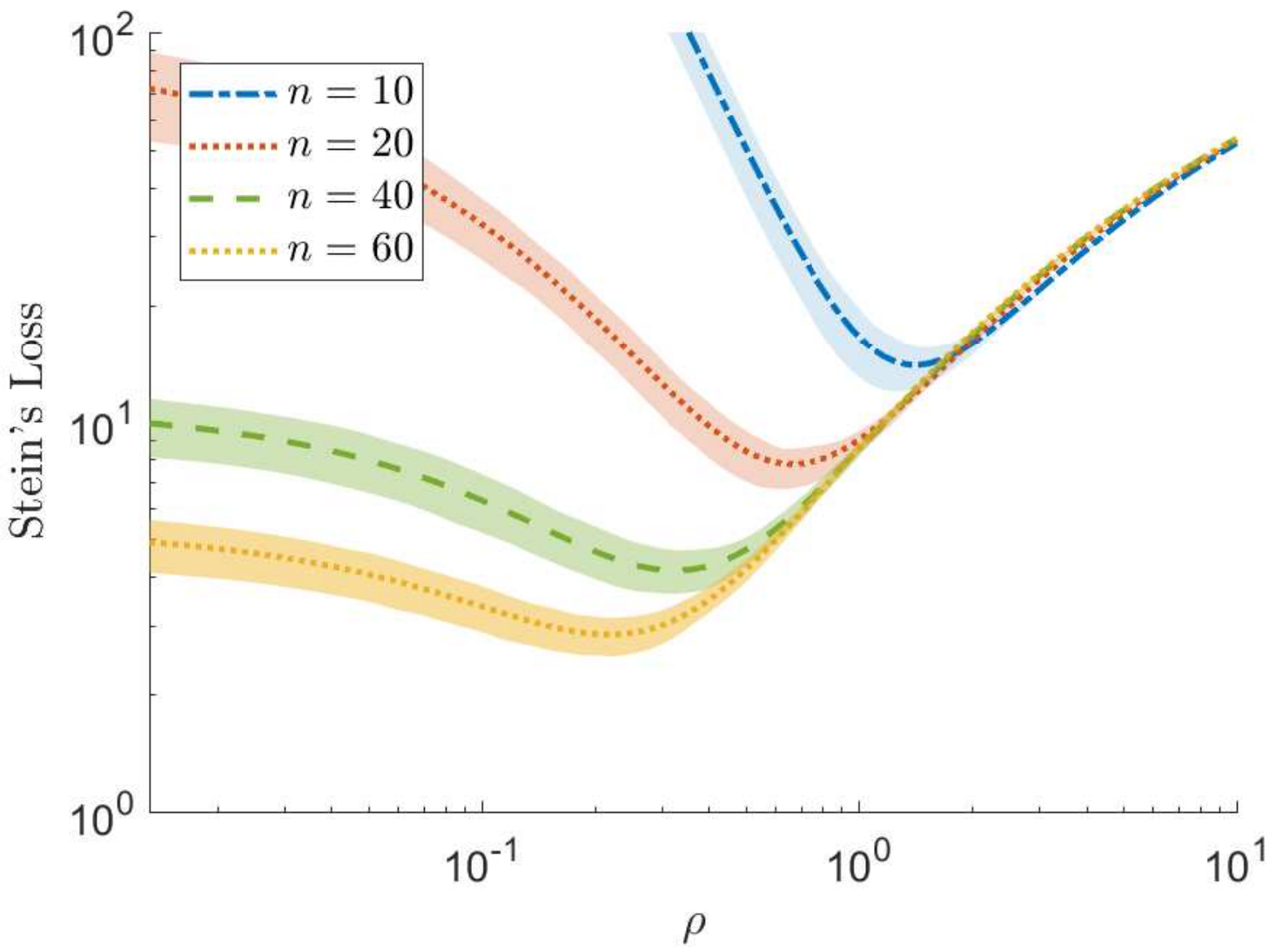}} \hspace{1mm}
	\subfigure[75\% sparsity information]{\label{fig:d200:W75}
		\includegraphics[width=0.31\columnwidth]{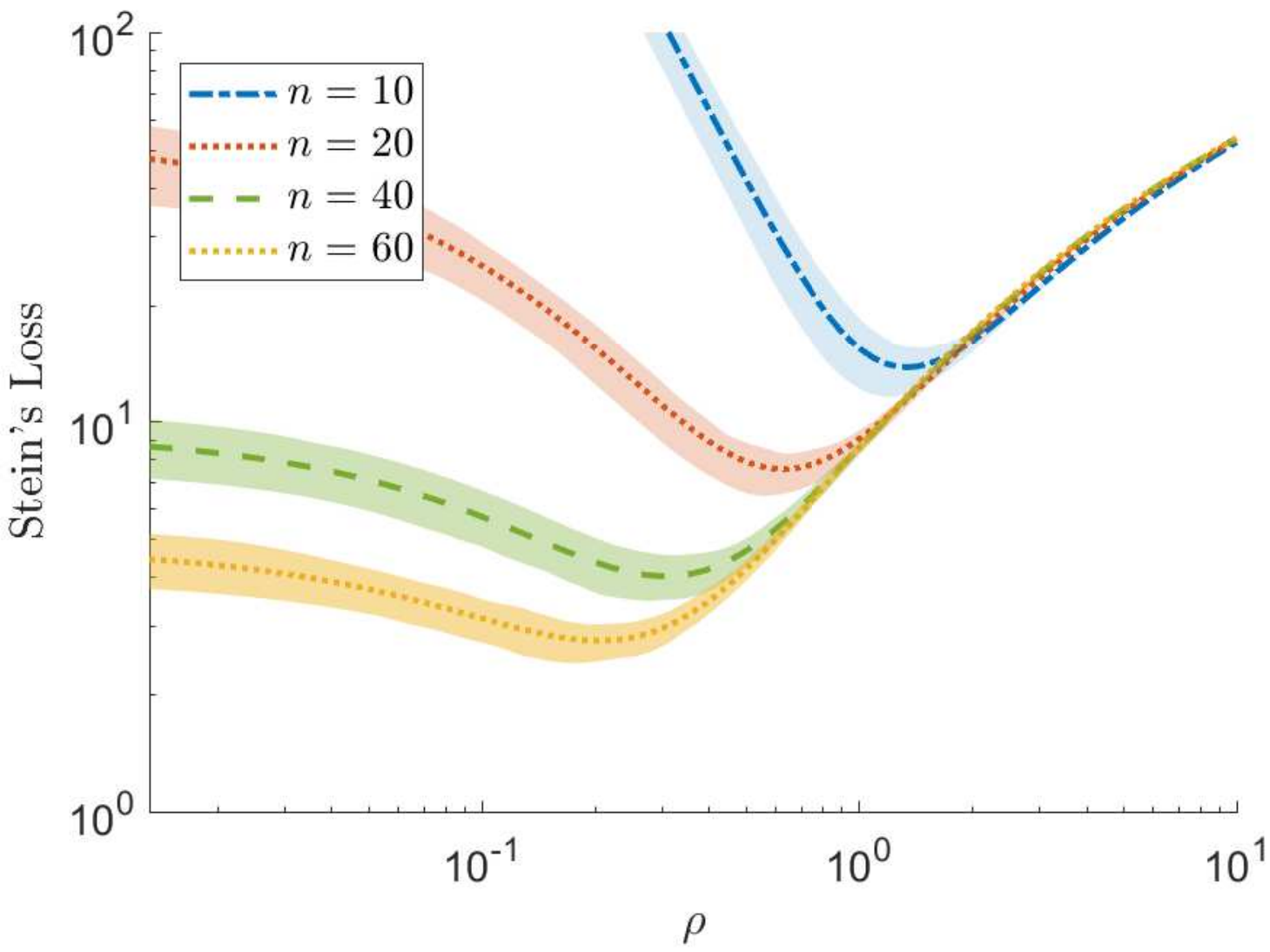}} \hspace{1mm}
	\subfigure[100\% sparsity information]{\label{fig:d200:W100}
		\includegraphics[width=0.31\columnwidth]{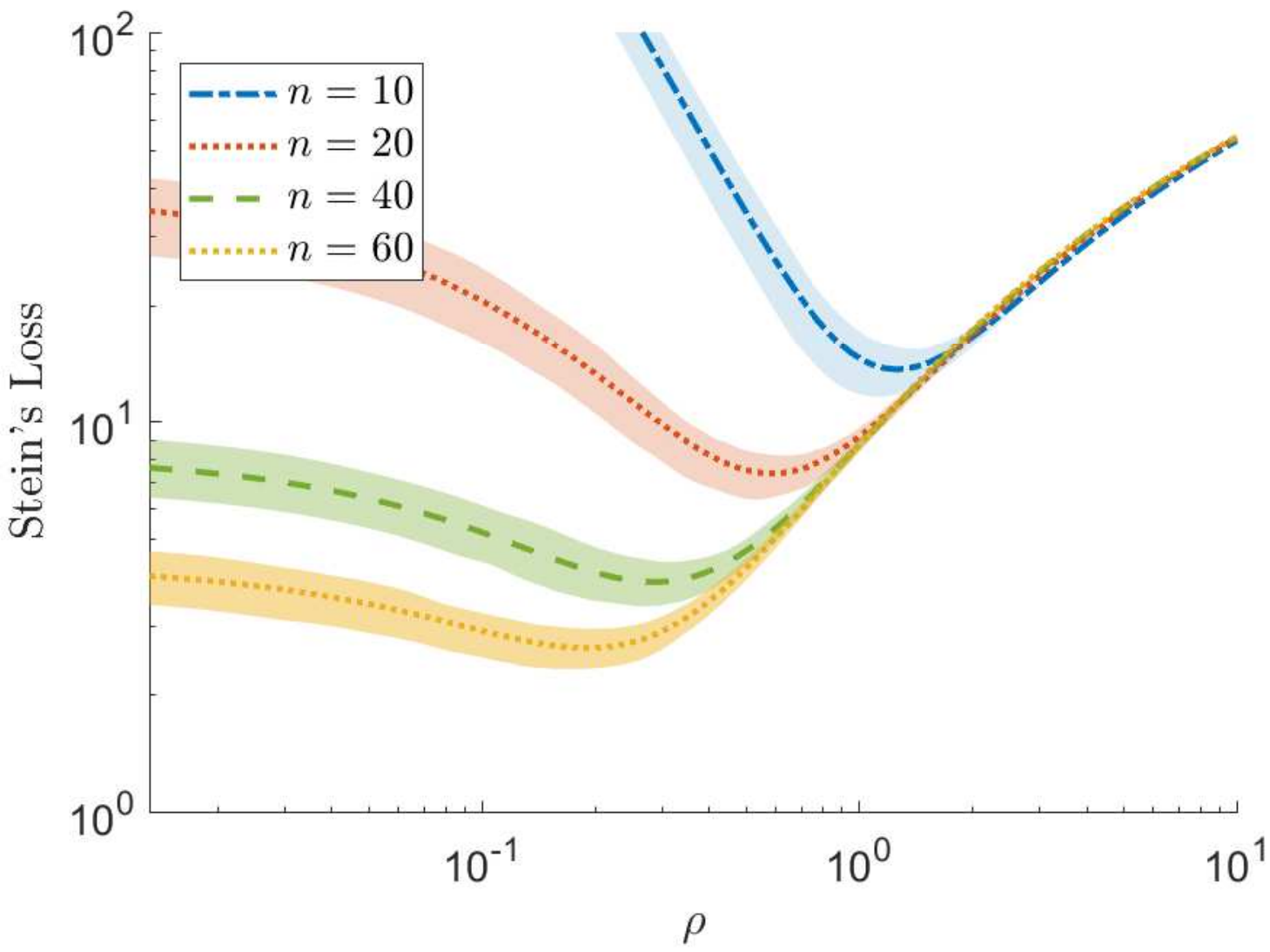}} \hspace{1mm}	
	\subfigure[50\% sparsity information]{\label{fig:d50:W50}
		\includegraphics[width=0.31\columnwidth]{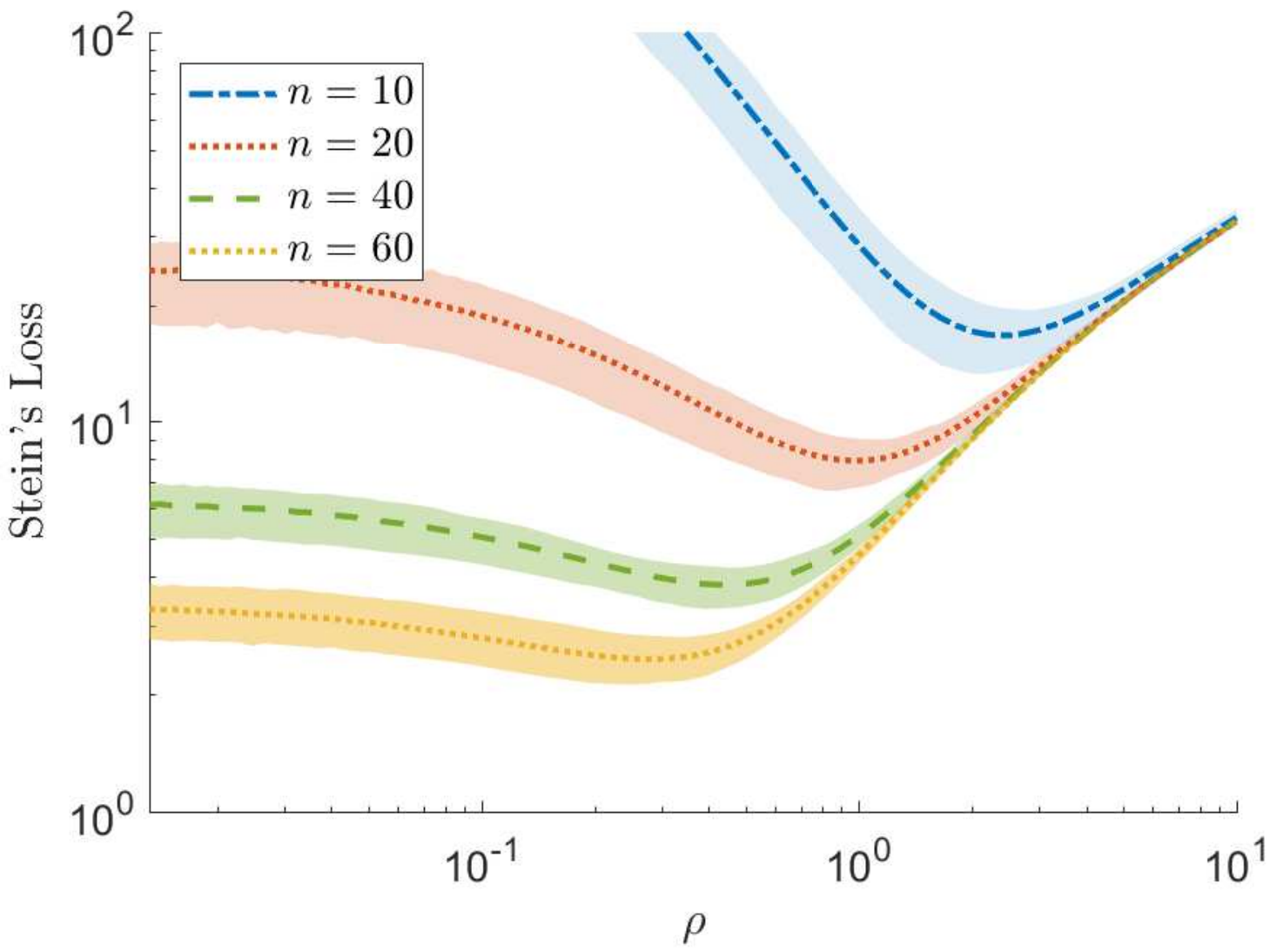}} \hspace{1mm}
	\subfigure[75\% sparsity information]{\label{fig:d50:W75} 
		\includegraphics[width=0.31\columnwidth]{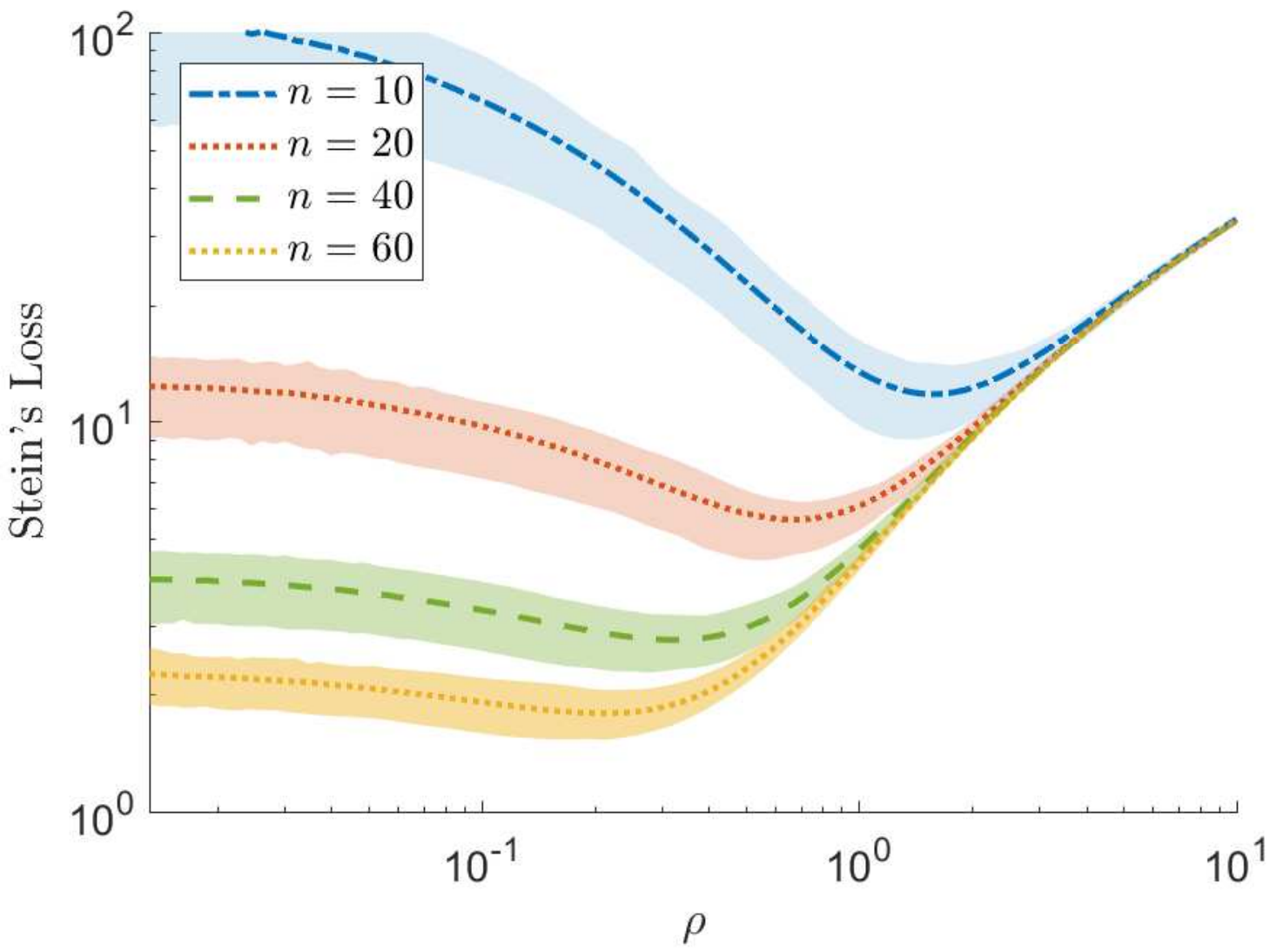}} \hspace{1mm}
	\subfigure[100\% sparsity information]{\label{fig:d50:W100} 
		\includegraphics[width=0.31\columnwidth]{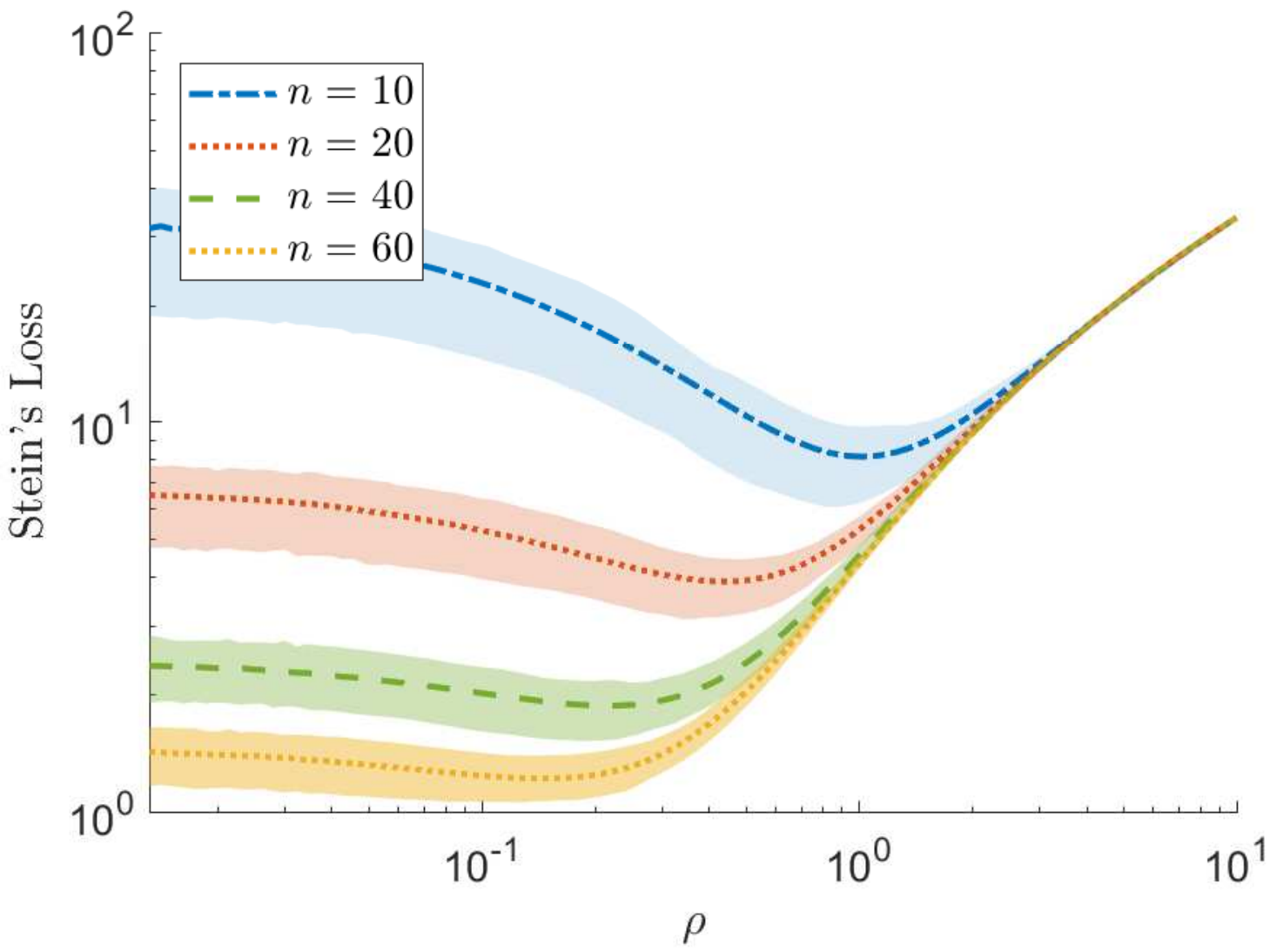}} \hspace{1mm}	
	\caption{Stein's loss of the Wasserstein shrinkage estimator with 50\%, 75\% or 100\% sparsity information as a function of the Wasserstein radius $\rho$ for $d=50\%$ (panels \ref{fig:d200:W50}--\ref{fig:d200:W100}) and $d=12.5\%$ (panels \ref{fig:d50:W50}--\ref{fig:d50:W100}).}
	\label{fig:perf:synthetic:structure}
\end{figure*}


All simulation experiments involve $100$ independent trials. In each trial, we first draw $n\in\{10, 20, 40, 60\}$ independent samples from $\mc N(0,\Sigma_0)$, which are used to compute the sample covariance matrix $\covsa$ and the corresponding precision matrix estimators. Figure~\ref{fig:perf:synthetic} shows Stein's loss of the Wasserstein shrinkage estimator without structure information for $\rho\in[10^{-2}, 10^1]$, the linear shrinkage estimator for $\alpha\in[10^{-5}, 10^0]$ and the $\ell_1$-regularized maximum likelihood estimator for $\beta\in[5\times 10^{-5}, 10^0]$. Lines represent averages, while shaded areas capture the tubes between the empirical 20\% and 80\% quantiles across all $100$ trials. Note that all three estimators approach $\covsa^{-1}$ when their respective tuning parameters tend to zero. As $\covsa$ is rank deficient for $n<p=20$, Stein's loss thus diverges for small tuning parameters when $n=10$.

The best Wasserstein shrinkage estimator in a given trial is defined as the one that minimizes Stein's loss over all $\rho\ge 0$. The best linear shrinkage and $\ell_1$-regularized maximum likelihood estimators are defined analogously. Figure~\ref{fig:perf:synthetic} reveals that the best Wasserstein shrinkage estimators dominate the best linear shrinkage and---to a lesser extent---the best $\ell_1$-regularized maximum likelihood estimators in terms of Stein's loss for all considered parameter settings. The dominance is more pronounced for small sample sizes. We emphasize that Stein's loss depends explicitly on the unknown true covariance matrix $\cov_0$. Thus, Figure~\ref{fig:perf:synthetic} is not available in practice, and the optimal tuning parameters $\rho\opt$, $\alpha\opt$ and $\beta\opt$ cannot be computed exactly. The performance of different precision matrix estimators with {\em estimated} tuning parameters will be studied in Section~\ref{sec:real-data}.



For $d=12.5\%$ and $d=50\%$, the true precision matrix $\cov^{-1}_0$ has many zeros, and prior knowledge of their positions could be used to improve estimator accuracy. To investigate this effect, we henceforth assume that the feasible set $\mc X$ correctly reflects a randomly selected portion of 50\%, 75\% or 100\% of all zeros of $\cov_0^{-1}$, while $\mc X$ contains no (neither correct nor incorrect) information about the remaining zeros. In this setting, we construct the Wasserstein shrinkage estimator by solving problem~\eqref{eq:DROSimplified} numerically. 

Figure~\ref{fig:perf:synthetic:structure} shows Stein's loss of the Wasserstein shrinkage estimator with prior information for $\rho\in[10^{-2}, 10^1]$. Lines represent averages, while shaded areas capture the tubes between the empirical 20\% and 80\% quantiles across $100$ trials. As expected, correct prior sparsity information improves estimator quality, and the more zeros are known, the better. Note that $\cov_0^{-1}$ contains $21.5\%$ zeros for $d=12.5\%$ and $68\%$ zeros for $d=50\%$.


In the last experiment, we investigate the Wasserstein radius $\rho\opt$ of the best Wasserstein shrinkage estimator without sparsity information. Figure~\ref{fig:learning:curve:Peyman:rho} visualizes the average of $\rho\opt$ across 100 independent trials as a function of the sample size~$n$. A standard regression analysis based on the data of Figure~\ref{fig:learning:curve:Peyman:rho} reveals that $\rho\opt$ converges to zero approximately as $n^{-\kappa}$ with $\kappa\approx 61\%$ for $d=12.5\%$, $\kappa\approx 66\%$ for $d= 50\%$ and $\kappa\approx 68\%$ for $d=100\%$. 

\begin{figure}
	\centering
	\includegraphics[width=0.4 \columnwidth]{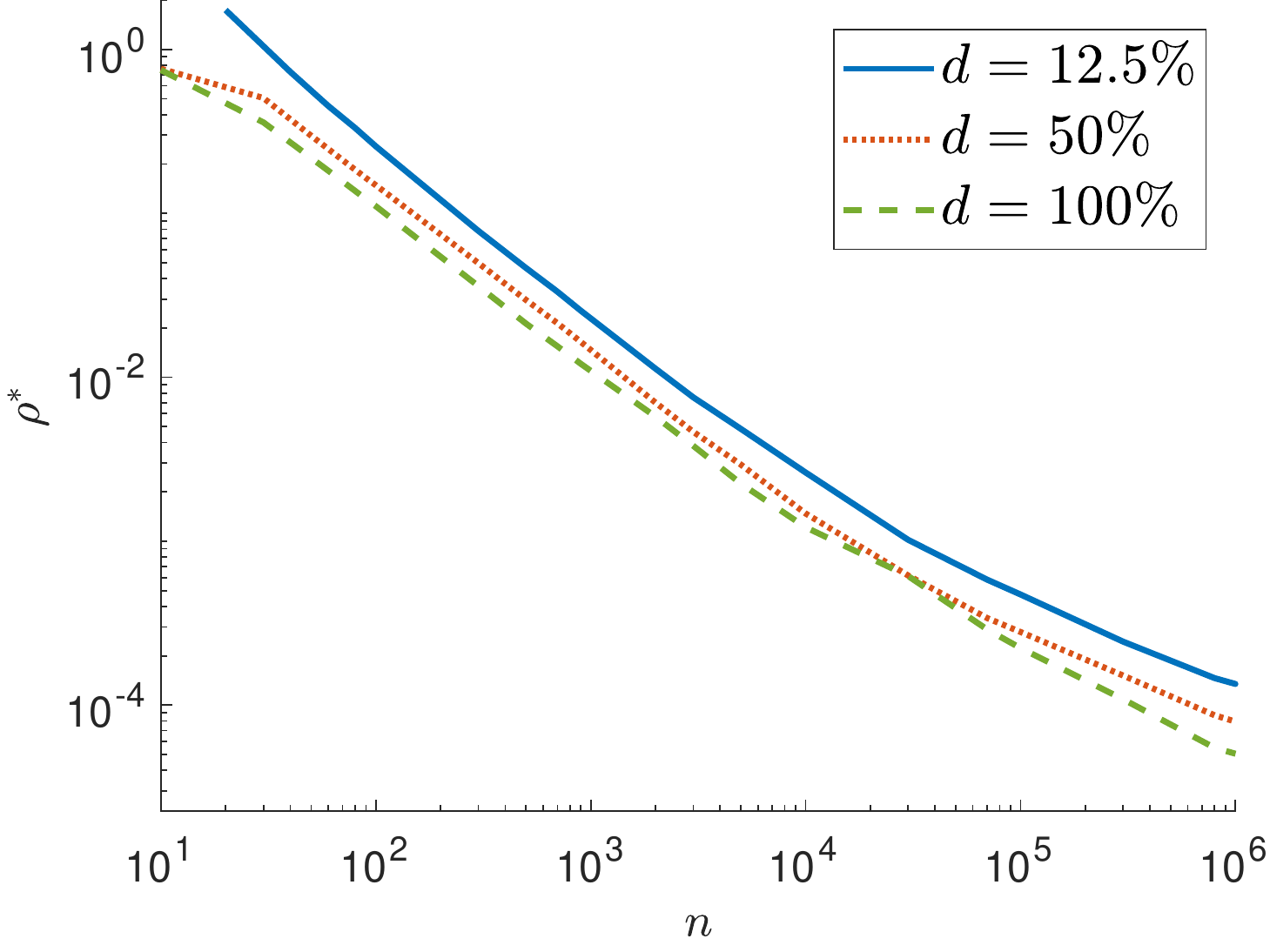}
	\caption{Dependence of the the best Wasserstein radius $\rho\opt$ on the sample size $n$.}
	\label{fig:learning:curve:Peyman:rho}
\end{figure}

\subsection{Experiments with Real Data}
\label{sec:real-data}
We now study the properties of the Wasserstein shrinkage estimator in the context of linear discriminant analysis, portfolio selection and the inference of solar irradiation patterns.

\subsubsection{Linear Discriminant Analysis}
\label{sect:LDA}
Linear discriminant analysis aims to predict the class $y\in\mc Y$, $|\mc Y|<\infty$, of a feature vector $z\in\R^p$ under the assumption that the conditional distribution of $z$ given $y$ is normal with a class-dependent mean $\mu_y\in\R^p$ and class-{\em in}dependent covariance matrix $\cov_0\in\PD^p$ \cite{ref:HastieTibshiraniFriedman-2001}. If all $\mu_y$ and $\cov_0$ are known, the maximum likelihood classifier $\mc C:\R^p\to \mc Y$ assigns $z$ to a class that maximizes the likelihood of observing $y$, that is,
\begin{equation}
\label{eq:ML-classifier}
\mc C(z) \in \arg \Min{y\in\mc Y} (z - \mu_y)^\top \cov_0^{-1} (z - \mu_y).
\end{equation}
In practice, however, the conditional moments are typically unknown and must be inferred from finitely many training samples $(\wh{z}_i, \wh{y}_i)$, $i\le n$. If we estimate $\mu_y$ by the sample average
\[
\wh{\mu}_y = \frac{1}{|\mc I_y|} \sum_{i\in \mc I_y} \wh{x}_i\,,
\]
where $\mc I_y=\{ i\in\{1,\ldots,n\}: \wh y_i=y\}$ records all samples in class $y$, then it is natural to define the residual feature vectors as $\wh \xi_i=\wh z_i-\wh \mu_{\wh y_i}$, $i\le n$. Accounting for Bessel's correction, the conditional distribution of~$\wh\xi_i$ given~$\wh y_i$ is normal with mean 0 and covariance matrix $(|\mc I_{\wh y_i}| -1)\, |\mc I_{\wh y_i}|^{-1} \cov_0$. The marginal distribution of $\wh\xi_i$ thus constitutes a mixture of $|\mc Y|$ normal distributions with mean 0, all of which share the same covariance matrix up to a scaling factor close to unity. As such, the residuals fail to be normally distributed. Moreover, due to their dependence on the sample means, the residuals are correlated. However, if each class accommodates many training samples, then the residuals can approximately be regarded as independent samples from $\mc N(0,\cov_0)$. 

Irrespective of these complications, the sample covariance matrix
\[
	\covsa = \frac{1}{n-|\mc Y|} \sum_{i=1}^n \wh{\xi}_i \wh{\xi}_i ^\top
\]
provides an unbiased estimator for $\cov_0$. Indeed, by the law of total expectation we have
\begin{align*}
	\EE^\mathbb P[\covsa] & = \frac{1}{n-|\mc Y|} \EE^\mathbb P\left[ \sum_{i=1}^n \EE^\mathbb P\left[\left.  \wh{\xi}_i \wh{\xi}_i ^\top \right| \wh y_i \right] \right]\\
	& = \frac{1}{n-|\mc Y|} \sum_{y\in\mc Y}\EE^\mathbb P\left[ \sum_{i\in \mc I_y} \frac{ |\mc I_{\wh y_i}| -1}{|\mc I_{\wh y_i}|} \Sigma \right]
	= \frac{1}{n-|\mc Y|} \sum_{y\in\mc Y}(|\mc I_y| -1) \cov_0 \;=\;\cov_0\,,
\end{align*}
where $\mathbb P$ stands for the unknown true joint distribution of the residuals and class labels. 
In a data-driven setting, the ideal maximum likelihood classifier \eqref{eq:ML-classifier} is replaced with
\be
\label{eq:LDA}
\wh{\mc C}(\xi) = \arg \Min{y\in \mc Y} (\xi - \wh{\mu}_y)^\top \est (\xi - \wh{\mu}_y)\,,
\ee
which depends on the raw data through the sample averages $\wh \mu_y$, $y\in\mc Y$, and some precision matrix estimator~$\est$. The possible choices for $\est$ include the Wasserstein shrinkage estimator without prior information, the linear shrinkage estimator and the $\ell_1$-regularized maximum likelihood estimator, all of which depend on the data merely through $\covsa$. Note that the na\"ive precision matrix estimator~$\covsa^{-1}$ exists only for $n>p$ and is therefore disregarded. All estimators depend on a scalar parameter (the Wasserstein radius $\rho$, the mixing parameter $\alpha$ or the penalty parameter $\beta$) that can be used to tune the performance of the classifier~\eqref{eq:LDA}.

We test the classifier~\eqref{eq:LDA} equipped with different estimators $\est$ on two preprocessed datasets from~\cite{ref:Dettling-2004}:

\begin{enumerate}
	\item The ``{\em colon cancer}'' dataset contains 62 gene expression profiles, each of which involves~2{,}000 features and is classified either as normal tissue (NT) or tumor-affected tissue (TT). The data is split into a training dataset of 29 observations (9 in class NT and 20 in class TT) and a test dataset of~33 observations (13 in class NT and 20 in class TT).
	\item The ``{\em leukemia}'' dataset contains 72 gene expression profiles, each of which involves  3{,}571 features and is classified either as acute lymphocytic leukemia (ALL) or acute myeloid leukemia (AML). The data is split into a training dataset of 38 observations (27 in class ALL and 11 in class AML) and a test dataset of 34 observations (20 in class ALL and 14 in class AML).
\end{enumerate}

Classification is based solely on the first $p\in\{ 20, 40, 80, 100\}$ features of each gene expression profile. We use leave-one-out cross validation on the training data to tune the precision matrix estimator~$\est$ with the goal to maximize the correct classification rate of the classifier~\eqref{eq:LDA}. To keep the computational overhead manageable, we optimize the tuning parameters over the finite search grids
\[
	\rho \in \{10^{\frac{j}{20}-1}: j=0,\ldots,60 \},\quad \alpha \in \{10^{\frac{j}{20}-3}: j=0,\ldots,60 \}\quad \text{and} \quad \beta \in \{10^{\frac{j}{20}-3}: j=0,\ldots,60\}. 
\]
We highlight that, in case of the $\ell_1$-regularized maximum likelihood estimator, cross validation becomes computationally prohibitive for $p>80$ even if the state-of-the-art QUIC routine is used~\cite{ref:Hsieh-2014} to solve the underlying semidefinite programs. In contrast, the Wasserstein and linear shrinkage estimators can be computed and tuned quickly even for $p\gg 100$. Once the optimal tuning parameters are found, we fix them and recalculate $\est$ on the basis of the entire training dataset. Finally, we substitute the resulting precision matrix estimator into the classifier~\eqref{eq:LDA} and evaluate its correct classification rate on the test dataset. The test results are reported in Table~\ref{table:LDA}. We observe that the Wasserstein shrinkage estimator frequently outperforms the linear shrinkage and $\ell_1$-regularized maximum likelihood estimators, especially for higher values of $p$.

\begin{table}[htbp]
	\centering
	\caption{Correct classification rate of the classifier~\eqref{eq:LDA} instantiated with different precision matrix estimators. The best result in each experiment is highlighted in bold.}
	\begin{tabular}{|l|r|r|r|r|r|r|r|r|}
		\hline
		& \multicolumn{4}{c|}{Colon cancer dataset}    & \multicolumn{4}{c|}{Leukemia dataset} \bigstrut\\
		\hline
		Estimator & $p=20$  & $p=40$  & $p=80$  & $p = 100$ & $p=20$  & $p=40$  & $p=80$  & $p = 100$ \bigstrut\\
		\hline
		Wasserstein shrinkage & \textbf{72.73} & 75.76 & \textbf{78.79} & \textbf{75.76} & \textbf{73.53} & 67.65 & \textbf{91.18} & \textbf{91.18} \bigstrut\\
		\hline
		Linear shrinkage & 57.58 & 72.73 & 72.73 & 72.73 & 70.59 & \textbf{70.59} & 82.35 & 82.35 \bigstrut\\
		\hline
		$\ell_1$-regularized ML & \textbf{72.73} & \textbf{78.79} & \textbf{78.79} & 72.73 & 70.59 & 64.71 & 82.35 & 82.35 \bigstrut\\
		\hline
	\end{tabular}%
	\label{table:LDA}
\end{table}%

\subsubsection{Minimum Variance Portfolio Selection}
\label{sect:MinVar}
Consider the minimum variance portfolio selection problem without short sale constraints \cite{ref:Jagannathan-2003} 
\[
\begin{array}{cl}
	\Min{w\in\R^p} & w^\top \cov_0 w \\
	\st & \mathbbm{1}^\top w = 1\,,
\end{array}
\]
where the portfolio vector $w\in\R^p$ captures the percentage weights of initial capital allocated to $p$ different assets with random returns, $\mathbbm{1}\in\R^p$ stands for the vector of ones, and $\cov_0\in\PD^p$ denotes the covariance matrix of the asset returns. The objective represents the variance of the portfolio return, which is strictly convex in $w$ thanks to the positive definiteness of $\cov_0$. The unique optimal solution of this portfolio selection problem is given by $w\opt = \cov_0^{-1}\mathbbm{1}/\mathbbm{1}^\top \cov_0^{-1} \mathbbm{1}$. In practice, the unknown true precision matrix $\cov_0^{-1}$ must be replaced with an estimator $\est$, which gives rise to the estimated minimum variance portfolio $\wh{w}\opt = \est \mathbbm{1}/\mathbbm{1}^\top \est \mathbbm{1}$.

A vast body of literature in finance focuses on finding accurate precision matrix estimators for portfolio construction, see, {\em e.g.}, \cite{ref:Miguel-2009, ref:Ledoit-2004:honey,ref:Torri-2017}. In the following we compare the minimum variance portfolios based on the Wasserstein shrinkage estimator without structural information, the linear shrinkage estimator and $\ell_1$-regularized maximum likelihood estimator on two preprocessed datasets from the Fama-French online data library:\footnote{See \url{http://mba.tuck.dartmouth.edu/pages/faculty/ken.french/data_library.html} (accessed January 2018)} the ``{\em 48 industry portfolios}'' dataset (FF48) and the ``{\em 100 portfolios formed on size and book-to-market}'' dataset (FF100). Recall that the estimators depend on the data only through the sample covariance matrix $\covsa$, which is computed from the residual returns relative to the sample means and thus needs to account for Bessel's correction. The datasets both consist of monthly returns for the period from January 1996 to December 2016. The first 120 observations from January~1996 to December~2005 serve as the training dataset. The optimal tuning parameters that minimize the portfolio variance are estimated via leave-one-out cross validation on the training dataset using the finite search grids
\[
	\rho \in \{10^{\frac{j}{100}-2}: j=0,\ldots,200 \},\quad \alpha \in \{10^{\frac{j}{100}-2}: j=0,\ldots,200 \}\quad \text{and} \quad \beta \in \{10^{\frac{j}{50}-4}: j=0,\ldots,200 \}. 
\]
The out-of-sample performance of the minimum variance portfolio corresponding to a particular precision matrix estimator is then evaluated using the rolling horizon method over the period from January 2006 to December 2016, where the sample covariance matrix needed as an input for the precision matrix is re-estimated every three months based on the most recent 120 observations (10 years), while the tuning parameter is kept fixed. The resulting out-of-sample mean, standard deviation and Sharpe ratio of the portfolio return are reported in Table~\ref{table:portfolio}. While the $\ell_1$-regularized maximum likelihood estimator yields the portfolio with the lowest standard deviation for both datasets, the Wasserstein shrinkage estimator always generates the highest mean and, maybe surprisingly, the highest Sharpe ratio.

\begin{table}[htbp]
	\centering
	\caption{Standard deviation, mean and Sharpe ratio of the minimum variance portfolio based on different estimators. The best result in each experiment is highlighted in bold.}
	\begin{tabular}{|l|r|r|r|r|r|r|}
		\hline
		& \multicolumn{3}{c|}{FF48 dataset} & \multicolumn{3}{c|}{FF100 dataset} \bigstrut\\
		\hline
		Estimator & \multicolumn{1}{l|}{std} & \multicolumn{1}{l|}{mean} & \multicolumn{1}{l|}{Sharpe} & \multicolumn{1}{l|}{std} & \multicolumn{1}{l|}{mean} & \multicolumn{1}{l|}{Sharpe} \bigstrut\\
		\hline
		Wasserstein shrinkage & 3.146 & \textbf{0.701} & \textbf{0.223} & 3.518 & \textbf{1.079} & \textbf{0.307}  \bigstrut\\
		\hline
		Linear shrinkage & 3.152 & 0.688 & 0.218 & 3.484 & 0.965 & 0.277  \bigstrut\\
		\hline
		$\ell_1$-regularized ML & \textbf{3.077} & 0.668 & 0.217 & \textbf{3.423} & 1.010 & 0.295 \bigstrut\\
		\hline
	\end{tabular}%
	\label{table:portfolio}%
\end{table}%

\subsubsection{Inference of Solar Irradiation Patterns}
\label{sec:solar}
In the last experiment we aim to estimate the spatial distribution of solar irradiation in Switzerland using the ``{\em surface incoming shortwave radiation}'' (SIS) data provided by MeteoSwiss.\footnote{See \url{http://www.meteoschweiz.admin.ch/data/products/2014/raeumliche-daten-globalstrahlung.html} (accessed January 2018)} The SIS data captures the horizontal solar irradiation intensities in $\text{W/m}^2$ for pixels of size 1.6km by 2.3km based on the effective cloud albedo, which is derived from satellite imagery. The dataset spans 13 years from 2004 to 2016, with a total number of 4{,}749 daily observations. We deseasonalize the time series of each pixel as follows. First, we divide the original time series by a shifted sinusoid with a yearly period, whose baseline level, phase and amplitude are estimated via ordinary least squares regression. Next, we subtract unity. The resulting deseasonalized time series is viewed as the sample path of a zero mean Gaussian noise process. This approach relies on the assumption that the mean and the standard deviation of the original time series share the same seasonality pattern. It remains to estimate the joint distribution of the pixel-wise Gaussian white noise processes, which is fully determined by the precision matrix of the deseasonalized data. We estimate the precision matrix using the Wasserstein shrinkage, linear shrinkage and $\ell_1$-regularized maximum likelihood estimators. As each pixel represents a geographical location and as the solar irradiation intensities at two distant pixels are likely to be conditionally independent given the intensities at all other pixels, it is reasonable to assume that the precision matrix is sparse; see also~\cite{ref:Das-2017, ref:Wiesel-2010}. Specifically, we assume here that the solar irradiation intensities at two pixels indexed by $(i,j)$ and $(i',j')$ are conditionally independent and that the corresponding entry of the precision matrix vanishes whenever $|i-i'|+|j-j'|> 3$. This sparsity information can be used to enhance the basic Wasserstein shrinkage estimator. 

\begin{figure}
	\centering
	\includegraphics[width=8cm]{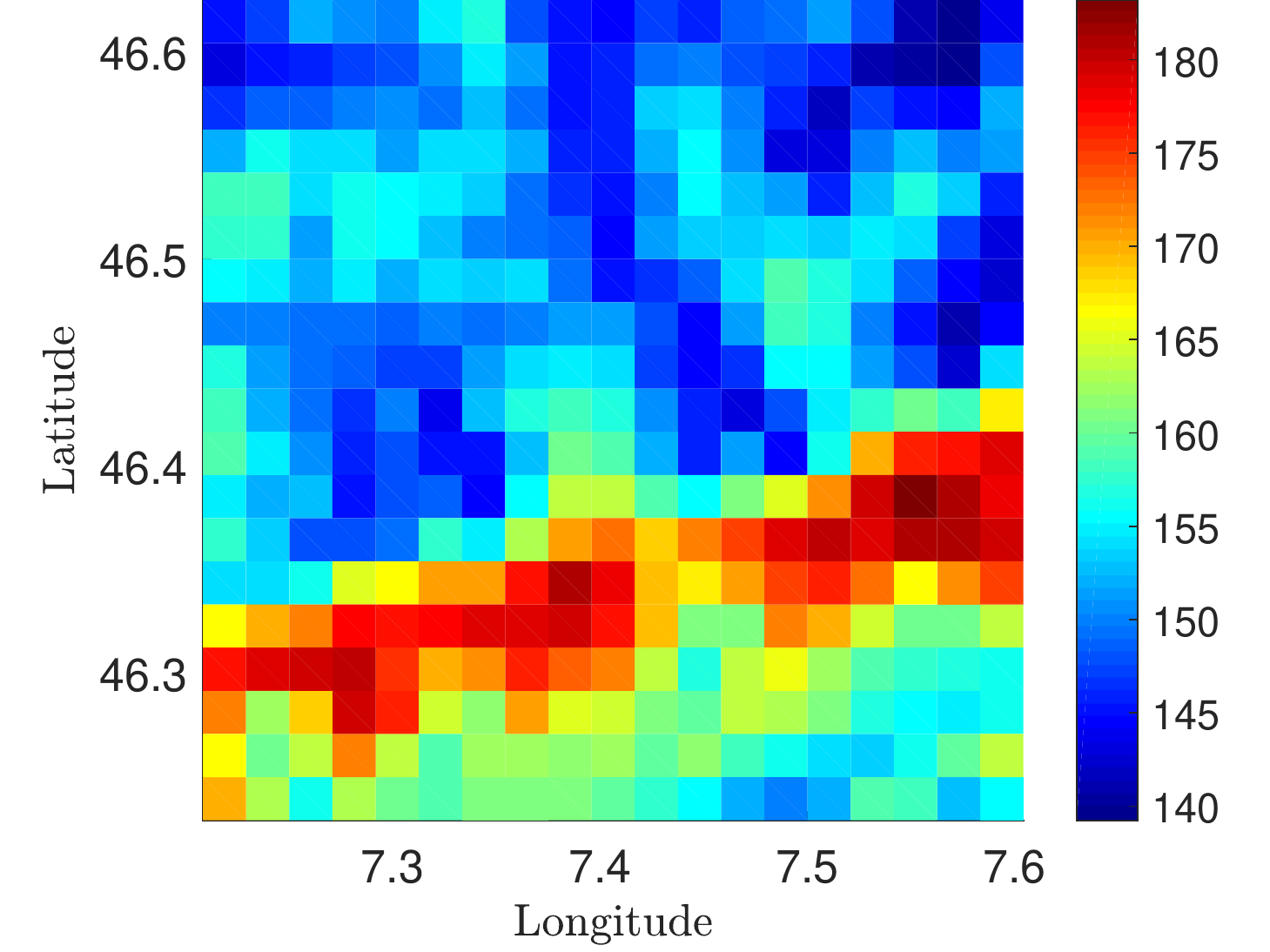}
	\caption{Average solar irradiation intensities ($\text{W/m}^2$) for the Diablerets region in Switzerland.}
	\label{figure:real:solar:mean}
\end{figure}

Consider now the Diablerets region of Switzerland, which is described by a spatial matrix of 20$\times$20 pixels. Thus, the corresponding precision matrix has dimension 400$\times$400. The average daily solar irradiation intensities within the region of interest are visualized in Figure~\ref{figure:real:solar:mean}. We note that the sunshine exposure is highly variable due to the heterogeneous geographical terrain characterized by a high mountain range in the south intertwined with deep valleys in the north. In order to assess the quality of a specific precision matrix estimator, we use $K$-fold cross validation with $K = 13$. 
The $k$-th fold comprises all observations of year~$k$ and is used to construct the estimator $\est_k$. The data of the remaining 12 years, without year~$k$, are used to compute the empirical covariance matrix $\covsa_{- k}$. The estimation error of $\est_k$ is then measured via Stein's loss
\[
L (\est_k, \covsa_{- k}) = -\log \det (\est_k \, \covsa_{- k}) + \inner{\est_k }{\covsa_{- k}} - p.
\]
We emphasize that here, in contrast to the experiment with synthetic data, $ \covsa_{- k}$ is used as a proxy for the unknown true covariance matrix $\cov$. Figure~\ref{fig:perf:solar} shows Stein's loss of the Wasserstein shrinkage estimator with and without structure information for $\rho\in[10^{-2}, 10^0]$, the linear shrinkage estimator for $\alpha\in[10^{-3}, 2\times10^{-2}]$ and the $\ell_1$-regularized maximum likelihood estimator for $\beta\in[10^{-5}, 10^{-3}]$. Lines represent averages, while shaded areas capture the tubes between the best- and worst-case loss realizations across all $K$ folds. 

The Wasserstein shrinkage estimator with structure information reduces the minimum average loss by 13.5\% relative to the state-of-the-art $\ell_1$-regularized maximum likelihood estimator. 
Moreover, the average runtimes for computing the different estimators amount to 51.84s for the Wasserstein shrinkage estimator with structural information (Algorithm~\ref{algo:QuadraticApprox}), 0.08s for the Wasserstein shrinkage estimator without structural information (analytical formula and bisection algorithm), 0.01s for the linear shrinkage estimator (analytical formula) and 1493.61s for the $\ell_1$-regularized maximum likelihood estimator (QUIC algorithm \cite{ref:Hsieh-2014}).

\begin{figure*} [t]
	\centering
	\subfigure[Wasserstein shrinkage]{\label{fig:solar:W} 
		\includegraphics[width=0.31\columnwidth]{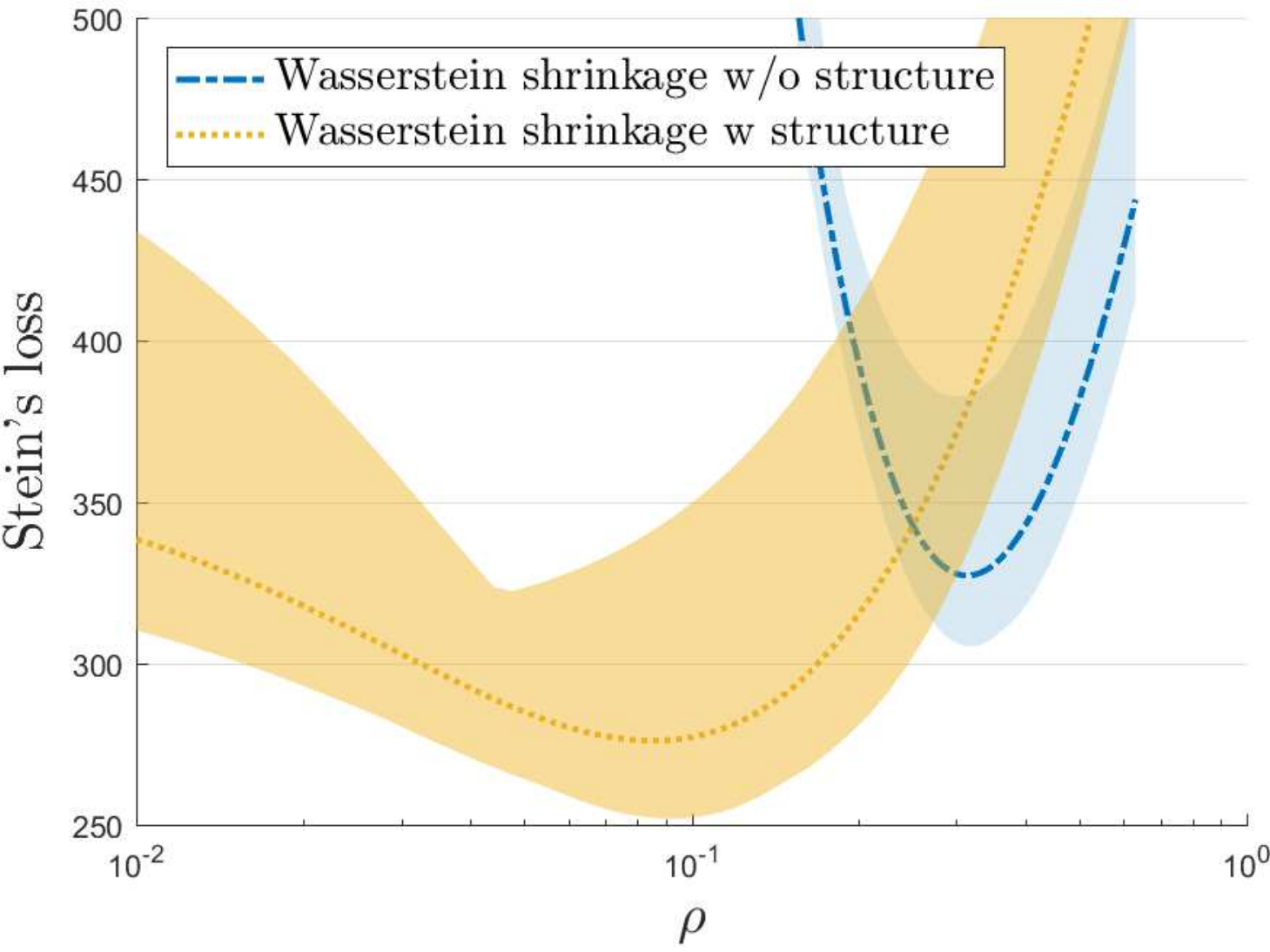}} \hspace{1mm}
	\subfigure[Linear shrinkage]{\label{fig:solar:S}
		\includegraphics[width=0.31\columnwidth]{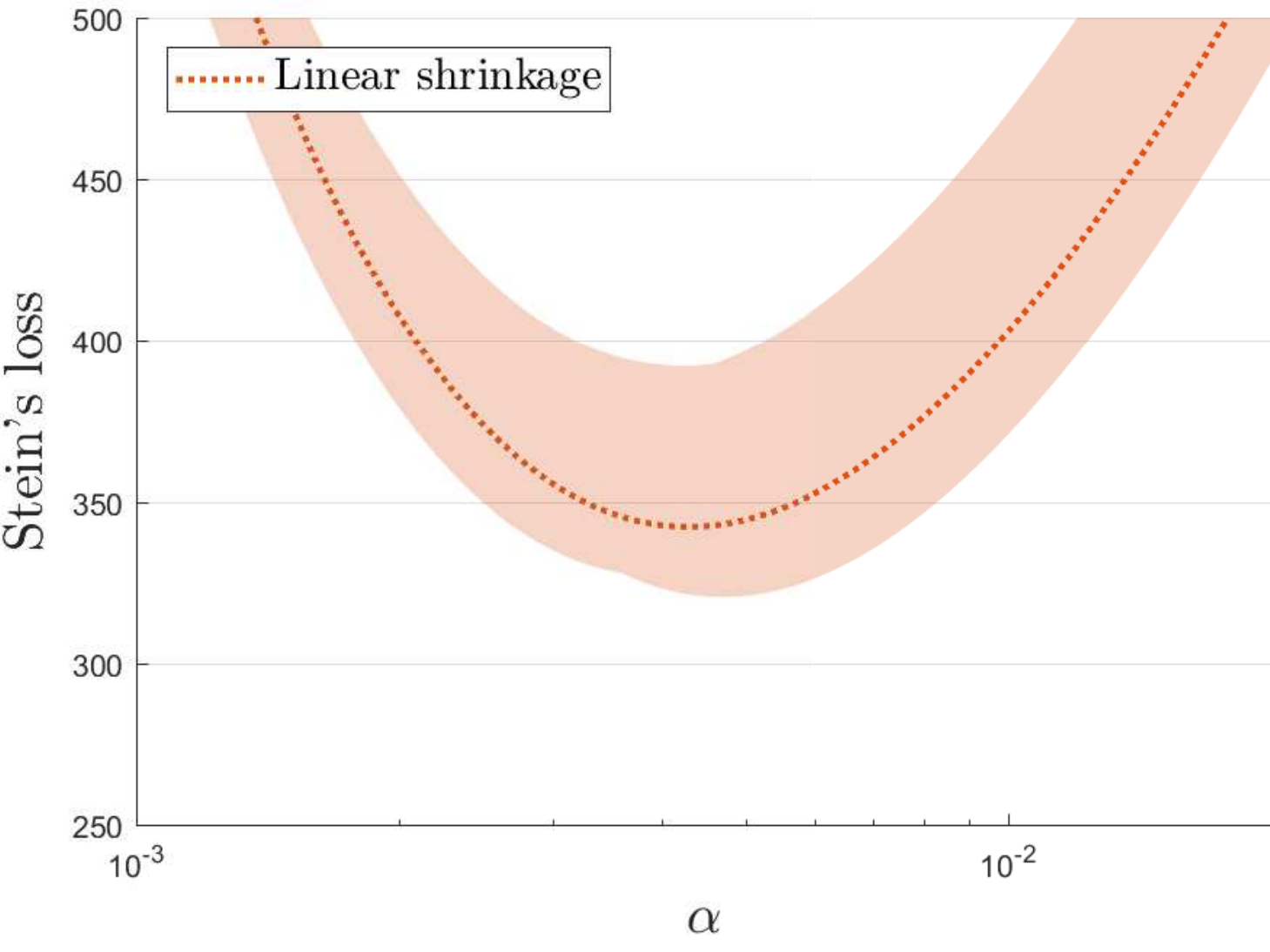}} \hspace{1mm}
	\subfigure[$\ell_1$-regularized ML]{\label{fig:solar:l1}
		\includegraphics[width=0.31\columnwidth]{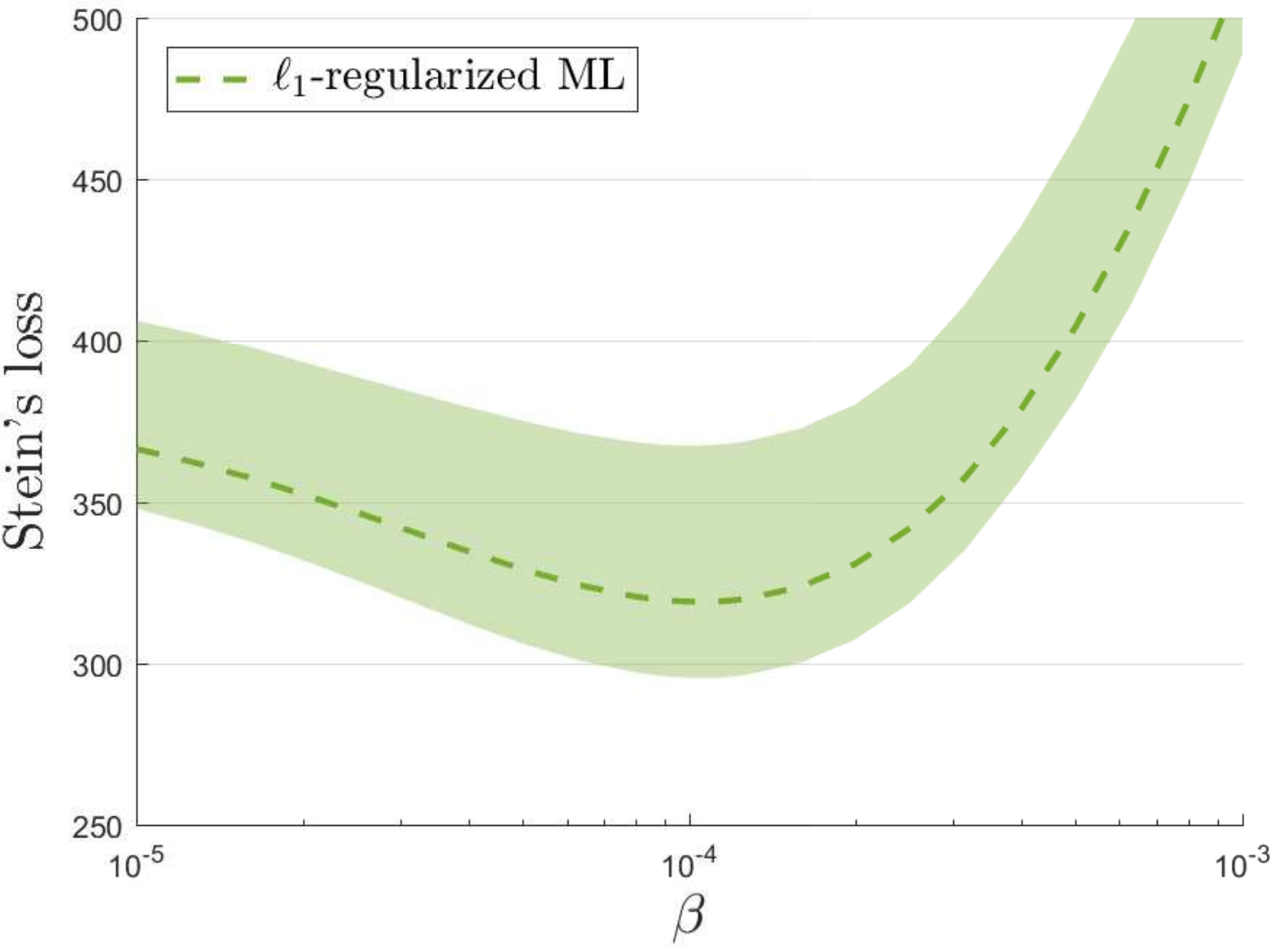}} \hspace{1mm}	
	\caption{Stein's loss of the Wasserstein shrinkage, linear shrinkage and $\ell_1$-regularized maximum likelihood estimators as a function of their respective tuning parameters.}
	\label{fig:perf:solar}
\end{figure*}

%
%
%

\paragraph{\bf Acknowledgments}
We gratefully acknoweldge financial support from the Swiss National Science Foundation under grants BSCGI0\_157733 and P2EZP2\_165264.

\end{document}